\RequirePackage{fix-cm}
\documentclass[smallextended]{svjour3}
\usepackage[T1]{fontenc}

\smartqed

\usepackage{algorithm}
\usepackage{algpseudocode}
\usepackage{amsmath,amssymb,amsfonts}
\usepackage{graphicx}
\usepackage{mathtools}
\usepackage{array}
\usepackage{booktabs}
\usepackage{url}
\usepackage{placeins}
\usepackage[hidelinks]{hyperref}

\DeclareMathOperator*{\Argmin}{Argmin}
\DeclareMathOperator*{\Argmax}{Argmax}
\DeclareMathOperator{\Diag}{Diag}
\DeclareMathOperator{\conv}{conv}

\makeatletter
\def\url@leostyle{%
  \@ifundefined{selectfont}{\def\UrlFont{\sf}}{\def\UrlFont{\small\ttfamily}}}
\makeatother
\spnewtheorem{assumption}[theorem]{Assumption}{\bfseries}{\itshape}
\let\springerproof\proof
\let\endspringerproof\endproof
\renewenvironment{proof}{\springerproof}{%
  \ifhmode\qed\else\leavevmode\qed\fi\endspringerproof}

\newcommand{\V}{{\mathrm{Vol}}}

\newcommand{\N}{\mathbb{N}}

\newcommand{\R}{\mathbb{R}}

\newcommand{\K}{\mathcal{K}}
\newcommand{\E}{\mathcal{E}}
\newcommand{\tr}{\mathrm{tr}}

\newcommand{\MaxIE}{\mathrm{MaxIE}}
\newcommand{\Vol}{\mathrm{Vol}}

\newcommand{\wh}{\widehat}
\renewcommand{\S}{\mathbb{S}}

\journalname{Mathematical Programming}

\begin{document}
\title{The Polarity Process for the Maximum-Volume Inscribed Ellipsoid Problem}
\author{Kaizhao Sun}

\institute{Kaizhao Sun 
\at DAMO Academy, Alibaba Group (U.S.) Inc. \\ 
\email{kaizhao.s@alibaba-inc.com}
}

\date{}
\maketitle

\begin{abstract}
We study the maximum-volume inscribed ellipsoid (MaxIE) problem for a polytope
through a geometric iteration based on polarity. Given an interior point, the
method forms the shifted polar polytope, computes its minimum-volume covering
ellipsoid (MinCE), and then polarizes this covering ellipsoid back to obtain a
new inscribed ellipsoid. This procedure was suggested by Khachiyan and Todd in
1993. 
Prior work studied basic polarity identities and gave an asymptotic-convergence
argument for the exact process, but did not furnish a
volume-ratio rate.
Independently of that asymptotic argument, we develop
a new analysis based on the log-volume of the polar MinCE as a
potential function. We prove its convexity with an explicit gradient formula
and establish global linear contraction of the potential gap along each
trajectory. The contraction factor is existential and
instance-dependent. 
This gives both a separate convergence proof and a finite volume-ratio iteration
bound. We further analyze an inexact polarity process in which each MinCE subproblem is solved
only approximately, and derive sufficient oracle tolerances for producing a
prescribed volume approximation. Combining this outer analysis with existing
algorithms for MinCE gives conditional arithmetic estimates based on the path-following Newton method,
the barycentric coordinate descent, and the away-step
Frank--Wolfe method. 
Numerical experiments compare these
oracle choices with two MaxIE baselines and illustrate that performance depends
on matching the MinCE solver to the instance geometry.
\keywords{Maximum-volume inscribed ellipsoid \and Minimum-volume covering
ellipsoid \and Polarity \and Convex geometry}
\subclass{90C25 \and 90C60 \and 52A40}
\end{abstract}

\addtolength{\abovedisplayskip}{-0.6pt}
\addtolength{\belowdisplayskip}{-0.6pt}
\addtolength{\belowdisplayshortskip}{-0.4pt}
\setlength{\jot}{2.5pt}

\section{Introduction}\label{sec:introduction}
The maximum-volume inscribed ellipsoid (MaxIE) of a polytope is a classical
object in convex geometry and optimization. In this paper we consider a
full-dimensional polytope
\begin{align}\label{eq: polytope}
    P = \{x \in \R^n : a_i^\top x \leq 1,\ \forall i\in[m]\}.
\end{align}
The MaxIE problem asks for the ellipsoid of largest volume contained in $P$,
denoted by $\mathrm{MaxIE}(P)$. For $\gamma\in(0,1]$, we call an inscribed ellipsoid $\E\subseteq P$
\emph{$\gamma$-maximal} if
\[
    \Vol(\E)\geq \gamma\,\Vol(\mathrm{MaxIE}(P)).
\]
The case $\gamma=1$ corresponds to exact maximality. The finite-iteration
complexity statements below are stated for $\gamma\in(0,1)$.
The MaxIE provides a canonical affine rounding of $P$ and plays an important
role in geometric algorithms and convex optimization
\cite{grotschel1988geometric,john1948extremum,khachiyan1990complexity}. In
high-dimensional applications, however, direct computation of MaxIE can be
expensive because the standard convex formulations involve both the center and
the full shape matrix of the ellipsoid.

Khachiyan and Todd \cite{khachiyan1990complexity} gave a foundational
complexity analysis for approximating the MaxIE of $P$ by an algorithm called \texttt{Ellip}.
In the concluding remarks of their 1993 paper, they proposed a natural geometric iteration: from
an interior point $x_k$, compute the minimum-volume covering ellipsoid (MinCE),
also known as the L\"owner ellipsoid,
of the shifted polar polytope $(P-x_k)^\circ$, polarize it back, and use the
shifted center of the resulting inscribed ellipsoid as the next iterate. They asked
whether this sequence converges to the MaxIE of $P$. We call this procedure the \emph{Polarity Process} and present it in Algorithm
\ref{alg: polarity process}. The method produces a new inscribed ellipsoid of
$P$ at each iteration; we use the center--shape parameterization
\eqref{eq: ellip_param} from Section
\ref{sec: preliminaries}.

\begin{algorithm}[h]
\caption{The Polarity Process}
\label{alg: polarity process}
\begin{algorithmic}[1]
\State \textbf{Initialize:} $x_0 \in \mathrm{int}(P)$; \Comment{take any interior point of $P$}
\For{$k = 0, 1, 2\cdots$}
\State compute $(P-x_k)^\circ$; \Comment{take the polar of $P$ w.r.t. $x_k$}
\State $\mathcal{E}_k^\circ := \E(c(x_k), Q(x_k)) \gets \mathrm{MinCE}((P-x_k)^\circ)$; \Comment{compute MinCE of the polar set}
\State $\mathcal{E}'_{k+1}:=\E(x'_{k+1}, E'_{k+1}) \gets \left(\mathcal{E}_k^\circ\right)^\circ$; \Comment{take the polar of MinCE}
\State $\mathcal{E}_{k+1}:= \E(x_{k+1}, E_{k+1})$ with $(x_{k+1},E_{k+1} ) \gets (x_k + x'_{k+1}, E'_{k+1})$; \Comment{shift back to $x_k$}
\EndFor
\end{algorithmic}
\end{algorithm}

We define the polarity mapping as follows. 
\begin{definition}[Polarity mapping]
For $x\in\mathrm{int}(P)$, let $\K(x)$ be the next center produced by one exact
iteration of Algorithm~\ref{alg: polarity process}, i.e., $x_{k+1}=\K(x_k)$.
\end{definition}

X. Andy Sun previously communicated to the author an unpublished draft containing an asymptotic-convergence argument for the polarity process and identifying the center of $\MaxIE(P)$ as the unique fixed point of $\K$. 
To the best of our knowledge, this unpublished argument was the first affirmative resolution of the convergence question posed by Khachiyan and Todd in 1993.
However, his analysis did not derive a quantitative contraction in the natural log-volume potential or a finite volume-ratio bound in the original geometry.
Motivated by this observation, but without relying on the unpublished argument, we develop a new analysis of the polarity process using the
log-volume of the polar MinCE as a potential. Our global linear contraction theorem for the potential gap along each trajectory
proves convergence and yields a finite and instance-dependent volume-ratio
iteration bound. We extend the analysis to inexact MinCE solves and combine it
with existing MinCE algorithms to obtain conditional arithmetic estimates for solving MaxIE problems.

\subsection{Related work}
Most existing algorithms for MaxIE use interior-point or path-following methods
for determinant optimization. The general self-concordant framework of
Nesterov and Nemirovskii \cite{nesterov1994interior}, specialized to MaxIE as in
Khachiyan and Todd \cite{khachiyan1990complexity}, provides the algorithmic
skeleton for such methods and an arithmetic complexity bound\footnote{Suppose $\gamma > 1/\exp(mR)$ so that the outer log is positive.} of
\[
    \mathcal{O}\left(
        m^{4.5}\log\left(
        \frac{mR}{\log(1/\gamma)}\right)
    \right) 
\]
for computing a $\gamma$-maximal ellipsoid, 
where $R> 1$ is a radius ratio for balls bounding $P$ from outside and inside.
The \texttt{Ellip} algorithm of Khachiyan and Todd \cite{khachiyan1990complexity} reduces MaxIE to a sequence of maximal-paraboloid subproblems and
solves these subproblems by path-following Newton methods.
Their overall arithmetic complexity bound is
\[
    \mathcal{O}\left(
        m^{3.5}\log\left(
        \frac{mR}{\log(1/\gamma)}\right)
        \left[1+\log^+\!\left(
        \frac{n\log R}{\log(1/\gamma)}\right)\right]
    \right).
\]
The second logarithmic term arises from solving a sequence of subproblems.
Subsequent work improved the worst-case complexity of direct MaxIE computation:
Anstreicher \cite{anstreicher2002improved} avoided the auxiliary subproblem
sequence in the Khachiyan--Todd formulation and obtained the bound
\[
    \mathcal{O}\left(
        m^{3.5}\log\left(
        \frac{mR}{\log(1/\gamma)}\right)
    \right).
\]
Zhang and Gao \cite{zhanggao2003mvie} developed practical primal-dual
interior-point algorithms, including dense formulations with leading
$\mathcal{O}(m^3)$ Newton-step cost per iteration, and compared their formulations
with a modified Khachiyan--Todd approach; however, they did not give a comparable
overall $\gamma$-dependent worst-case arithmetic complexity bound. Works
\cite{anstreicher2002improved,zhanggao2003mvie}
should be viewed as direct MaxIE
interior-point methods, rather than as polarity iterations or reductions to
MinCE oracles. They give polynomial-time algorithms or efficient practical
methods, but their Newton systems can be expensive in the large-scale regimes
that motivate this paper.

The polarity process originates from the geometric connection between inscribed
and covering ellipsoids. A related polarity-based idea was studied by Xie et al.~\cite{xie2006maxsphere} for approximating the maximum
inscribed sphere of a high-dimensional polytope under bounded aspect-ratio
assumptions. Their analysis reduces the sphere problem to a sequence of minimum
enclosing sphere problems and relies on core-set and Euclidean geometric
arguments. The ellipsoidal problem studied in our paper additionally requires
control of both the center and the shape matrix. The closest prior analysis is
the aforementioned unpublished work by X. Andy Sun.
He derived the basic MaxIE--MinCE polarity identities, the unique fixed-point characterization, 
and monotonicity of the returned ellipsoid volumes, together with continuity and asymptotic-convergence arguments.
He also invoked a theorem by Bessaga \cite{bessaga1959converse}, which gives contraction in an unspecified complete metric but does not yield a log-volume contraction, volume-ratio bound, or arithmetic complexity estimate. Our analysis is independent of that asymptotic argument and uses a log-volume potential, differential
properties of the polar MinCE value function, and perturbation bounds for
inexact MinCE solves.

The literature on MinCE supplies the computational mechanisms used in
this paper. Khachiyan \cite{khachiyan1996rounding} studied rounding and the
L\"owner ellipsoid problem in the real-number model of computation. His
lifting technique is the transformation used here for the polar
MinCE subproblems. Khachiyan introduced a barycentric coordinate descent (BCD) method and gave
explicit arithmetic complexity bounds; he also gave a path-following route for
approximating the L\"owner ellipsoid. Sun and Freund
\cite{sunfreund2004mvce} developed practical algorithms for computing
minimum-volume covering ellipsoids, including a dual reduced Newton method and
active-set strategies that perform well on large data sets. Ahipasaoglu et al. \cite{ahipasaoglu2008fw} later interpreted the corresponding dual updates as a Frank--Wolfe method and analyzed a modified version with
away steps, the Wolfe--Atwood--Todd--Yildirim (WA-TY) method, proving
fixed-instance linear convergence for the dual MinCE problem.

Recent work on centered John ellipsoids uses leverage-score iterations,
equivalently D-optimal design. For $m$ constraints in dimension $d$, the
certified uniform average in \cite{CohenCousinsLeeYang2019} requires
$\mathcal{O}(\epsilon^{-1}\log(m/d))$ rounds. Lazy-update, streaming, sketching,
sparsity, and small-treewidth variants reduce per-round cost but retain this
$1/\epsilon$ dependence \cite{WoodruffYasuda2024,CaoLiSongYangZhou2025}. Zhao \cite{Zhao2025AwayStep}
analyzed a distinct method in the same broad away-step Frank--Wolfe
family and proved global linear convergence in objective and Frank--Wolfe gap. Li et al. \cite{LiYuJiangGaoHan2026} traced the $1/\epsilon$ factor to uniform
averaging: with exact scores and a nondegenerate optimum, their accelerated and
facial-Newton phases require $C+\mathcal{O}(\sqrt{\kappa}\log(1/\epsilon))$
and $C+\mathcal{O}(d^2\log\log(1/\epsilon))$ queries, respectively, where 
$\kappa$ is the facial Hessian condition number and $C$ is an instance-dependent setup cost.
These fixed-instance advances complement our moving-center analysis.


\subsection{Our contributions}
We develop the following new algorithmic theory of the polarity process.

\begin{itemize}
    \item \textbf{New potential analysis.} We analyze the exact process through
    \[
        v(x)=\log \Vol\bigl(\mathrm{MinCE}((P-x)^\circ)\bigr),
    \]
    proving convexity, differentiability, and the gradient identity
    \[
        \nabla v(x)=(n+1)c(x),
    \]
    where $c(x)$ is the polar MinCE center. Local quadratic growth and a
    Polyak–Łojasiewicz-type inequality yield instance-dependent global linear contraction of the potential gap along each trajectory and
    an iteration bound for producing a $\gamma$-maximal inscribed ellipsoid.

    \item \textbf{Inexact polarity process.} We allow additive log-determinant
    error in each polar MinCE subproblem and give sufficient oracle tolerances
    based on the target volume ratio $\gamma$ and initial-sublevel geometry,
    preserving the outer guarantee.

    \item \textbf{Conditional arithmetic estimates with concrete MinCE oracles.} We combine the outer
    inexact analysis with existing MinCE algorithms. Khachiyan's results
    \cite{khachiyan1996rounding} give bounds for BCD
    and path-following Newton oracles, while the WA-TY fixed-instance theory
    \cite{ahipasaoglu2008fw} supplies the iteration bound used in our
    realized-trajectory work accounting.
    With the path-following Newton method as the MinCE
    oracle, the polarity process has the conditional arithmetic estimate
    \[
        \mathcal{O}\left(
            m^{3.5}\log\left(\frac{m^3}{\log(1/\gamma)^2}\right)
            \left[1+\log^+\left(\frac{2n\log R}{\log(1/\gamma)}\right)\right]
        \right).
    \]
    In the high-accuracy regime,
    $\log(m^3/\log(1/\gamma)^2)=\mathcal O(\log(m/\log(1/\gamma)))$; unlike
    \texttt{Ellip}, the displayed factor has no explicit $R$-dependence when the
    outer geometric constants are fixed.
    This does not improve Anstreicher's direct worst-case bound; it shows
    that the above estimate has the same
    $m^{3.5}$ factor with a sequence of MinCE
    subproblems.
    With the BCD oracle,
    the polarity process has the arithmetic estimate
    \[
        \mathcal{O}\left(
            \left(\frac{m^3n^3}{\log(1/\gamma)^2}+mn^2\log\log m\right)
            \left[1+\log^+\left(\frac{2n\log R}{\log(1/\gamma)}\right)\right]
        \right).
    \]
    The BCD oracle is a clean theoretical baseline but has high target-accuracy
    dependence. The away-step Frank--Wolfe oracle from \cite{ahipasaoglu2008fw}
    exploits sparse-support updates and away steps and gives the
    realized-trajectory estimate
    \[
        \mathcal{O}\left(
            m(n+1)^2
            \left(
                J_\gamma+
                C_{\mathrm{WA},\gamma}\log\left(\frac{m^2(n+1)}{\log(1/\gamma)^2}\right)
            \right)
            \left[1+\log^+\left(\frac{2n\log R}{\log(1/\gamma)}\right)\right]
        \right),
    \]
    where $J_\gamma$ and $C_{\mathrm{WA},\gamma}$ are instance-dependent
    constants for the realized WA-TY trajectory. This a posteriori accounting
    has the smallest displayed explicit polynomial prefactor in $m$ and $n$
    among these estimates. All derived bounds hide outer
    constants set by the initial sublevel geometry.

    \item \textbf{Numerical case studies.} Four polytope families illustrate that
    the process could be competitive with geometry-appropriate MinCE solvers.
\end{itemize}

\subsection{Notation and organization}\label{sec: notation}
For $p\in\N$, let $[p]=\{1,\ldots,p\}$. Write $\R^n$ for Euclidean space,
$\langle x,y\rangle=x^\top y$, and $\|x\|$ for its norm. 
Let $\bar B_2(x,r)$ and $B_2(x,r)$ denote the closed and open Euclidean balls
centered at $x$ with radius $r>0$. Use $\mathrm{int}$, $\conv$,
and $(\cdot)^\circ$ for interior, convex hull, and
origin polarity after translation, respectively. 

Matrix operator and Frobenius norms are $\|A\|,\|A\|_F$; $\mathbf0$ and $I_n$ denote zero and
identity matrices or vectors of clear dimension. Use $\S^n_{++}(\S^n_+)$ for the positive-(semi)definite cone,
and $\tr,\det$ are trace and determinant. For $t>0$, write $\log^+(t):=\max\{0,\log t\}$ and set $\log^+(0):=0$.
Other symbols are defined when first used.

Section~\ref{sec: preliminaries} provides background material and prior
results. Sections \ref{sec:convergence-polarity-process} and
\ref{sec:inexact-polarity-process} analyze exact and inexact processes with complexity estimates, respectively, 
followed by experiments in Section \ref{sec:numerical-experiments}. Section \ref{sec:conclusion} concludes this paper.

\section{Preliminaries}\label{sec: preliminaries}
\subsection{Parameterization of an ellipsoid}
An ellipsoid $\E\subset\R^n$ is an affine image of the unit ball:
\begin{subequations}\label{eq: ellip_param}
\begin{align}
    \E &= \{Bu + c : \|u\|_2 \leq 1\} \label{eq: ellip_param_1}\\
       &= \{x : (x-c)^\top B^{-2}(x-c) \leq 1\}, \label{eq: ellip_param_2}
\end{align}
\end{subequations}
where $B\in\S^n_{++}$ and $c\in\R^n$. The volume of $\E$ is $(\det B)\mu_n$, where $\mu_n$
is the volume of the unit Euclidean ball in $\R^n$. We use the normalized volume:
\begin{align}\label{eq: scaled_ellip_volume}
    \mathrm{Vol}(\E) = \det B.
\end{align}
We denote the ellipsoid in the form of \eqref{eq: ellip_param_2} by $\E(c,B^{-2})$; more generally,
for some inverse-shape matrix $Q\in \S^n_{++}$, we write
\begin{align}
    & \E(c,Q)=  \{y\in\R^n:(y-c)^\top Q(y-c)\le1\}, \\
    & \Vol(\E(c,Q))= \det(Q)^{-1/2}.
\end{align}

\subsection{The MinCE problem, convex reformulation, and dual problem}
\paragraph{The MinCE Problem.} Fix $x \in \operatorname{int}(P)$. Define
\begin{align}\label{eq: polar_extreme_points}
    \tilde{a}_i(x) = \frac{a_i}{1-a_i^\top x}, \qquad i\in[m].
\end{align}
Then $(P-x)^\circ=\conv\{\tilde a_i(x)\}_{i=1}^m$. For an irredundant
representation of $P$, these are precisely extreme points of $(P-x)^\circ$.

A standard formulation of $\mathrm{MinCE}((P-x)^\circ)$ is
\begin{subequations}\label{eq: MinCE}
\begin{align}
    (c(x), Q(x)) \in \Argmin_{c\in \R^n, Q\in \mathbb{S}^n_{++}}\quad & -\log \det Q \\
    \mathrm{s.t.}\quad & (\tilde{a}_i(x) - c)^\top Q(\tilde{a}_i(x) - c) \leq 1, ~ \forall i\in[m].
\end{align}
\end{subequations}
The MinCE optimizer $\E(c(x),Q(x))$ has normalized log-volume
$-\frac12\log\det Q$.

\paragraph{Convex Reformulation in $\R^{n+1}$.} Although \eqref{eq: MinCE} is
not jointly convex in $(c,Q)$, lifting yields the following convex reformulation:
\begin{subequations}\label{eq: lifted_MinCE}
\begin{align}
    \min_{\mathcal{Q} \in \S^{n+1}_{++}} \quad & -\log \det \mathcal{Q} \\
    \mathrm{s.t.}\quad &
    \begin{bmatrix}
        \tilde{a}_i(x)\\ 1
    \end{bmatrix}^\top
    \mathcal{Q}
     \begin{bmatrix}
        \tilde{a}_i(x)\\ 1
    \end{bmatrix} \leq 1, ~\forall i\in[m].
\end{align}
\end{subequations}
This standard lifting is used in \cite{khachiyan1996rounding}; see also \cite{khachiyan1990complexity,nesterov1994interior,sunfreund2004mvce}. Write the optimizer as
\begin{align} \label{eq: lifted_MinCE_sol}
    \mathcal{Q}(x) = \begin{bmatrix}
            G(x) & z(x) \\ z(x)^\top & s(x)
    \end{bmatrix}
    \in \S^{n+1}_{++}.
\end{align}
Its height-one section is the MinCE, recovered by
\begin{align}\label{eq: MinCE_primal_sol_from_reformulation}
    c(x) = -G(x)^{-1}z(x), \quad
    Q(x) = \frac{G(x)}{1-s(x)+ z(x)^\top G(x)^{-1} z(x)}.
\end{align}
\paragraph{Dual MinCE Problem.} The dual MinCE problem can be written as
\begin{align}\label{eq: dual_MinCE}
    \max_{\lambda\in \Delta_m} ~ \log \det
    \begin{bmatrix}
        \sum_{i=1}^m \lambda_i \tilde{a}_i(x)\tilde{a}_i(x)^\top & \sum_{i=1}^m \lambda_i \tilde{a}_i(x) \\
        \sum_{i=1}^m \lambda_i \tilde{a}_i(x)^\top & 1
    \end{bmatrix},
\end{align}
where $\Delta_m:=\{\lambda\in\R^m_+:\sum_i\lambda_i=1\}$. From any dual
optimizer $\lambda^*(x)$, the primal optimal solution can be recovered via
\begin{subequations}\label{eq: MinCE_primal_sol}
\begin{align}
    c(x) = & \sum_{i=1}^m \lambda_i^*(x)\tilde a_i(x),\\
    Q(x) =&  \frac{1}{n}\left(\sum_{i=1}^m \lambda_i^*(x)\tilde a_i(x)\tilde a_i(x)^\top
    - c(x)c(x)^\top\right)^{-1}.
\end{align}
\end{subequations}

We conclude this section with the following useful fact; see, e.g.,
\cite{aubrun2017alice,kurzhanski1997ellipsoidal}.
\begin{lemma}\label{lemma: useful_lemma}
Let $\E=\E(c,Q)$ contain the origin in its interior. Its polar with respect to the origin is
\[
    \E^\circ
    =
    \E\!\left(-\frac{Qc}{1-c^\top Q c},
    (1-c^\top Q c)(Q^{-1}-cc^\top)\right).
\]
Moreover,
\[
    \V(\E)=\frac{1}{(\det Q)^{1/2}},
    \qquad
    \V(\E^\circ)=\frac{(\det Q)^{1/2}}{(1-c^\top Q c)^{(n+1)/2}},
\]
and hence
\begin{align*}
    \V(\E)\V(\E^\circ)
    &=\frac{1}{(1-c^\top Q c)^{(n+1)/2}},\\
    \log\V(\E)+\log\V(\E^\circ)
    &=\frac{n+1}{2}\log\!\left(\frac{1}{1-c^\top Q c}\right).
\end{align*}
The two polar ellipsoids are centered at the origin if and only if $c^\top Q c=0$;
equivalently, their volume product is $1$, or their log-volume sum is $0$.
\end{lemma}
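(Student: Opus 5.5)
The plan is a direct computation in the center--shape parameterization, working with support functions. Write $\E=\E(c,Q)$ as $\{c+Q^{-1/2}u:\|u\|\le1\}$. Its support function is $h_\E(y)=c^\top y+\sqrt{y^\top Q^{-1}y}$. By definition, $\E^\circ=\{y: x^\top y\le1\ \forall x\in\E\}=\{y: h_\E(y)\le 1\}$. So $y\in\E^\circ$ if and only if
\[
    \sqrt{y^\top Q^{-1}y}\le 1-c^\top y .
\]
Because $0\in\mathrm{int}(\E)$ we have $c^\top Qc<1$, and the right-hand side is positive on the relevant set. Squaring then gives the equivalent quadratic inequality
\[
    y^\top (Q^{-1}-cc^\top)y+2c^\top y\le 1,
\]
together with the side condition $1-c^\top y\ge 0$.

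The next step is to check that $M:=Q^{-1}-cc^\top$ is positive definite. This follows from the Schur complement, or from the matrix determinant lemma: $\det M=\det(Q^{-1})(1-c^\top Qc)>0$, and $M$ is a rank-one perturbation of $Q^{-1}$ with this determinant sign. Completing the square, with center $y_0=-M^{-1}c$, rewrites the inequality as
\[
    (y-y_0)^\top M (y-y_0)\le 1+c^\top M^{-1}c .
\]
By Sherman--Morrison,
\[
    M^{-1}=Q+\frac{Qcc^\top Q}{1-c^\top Qc},
\]
so $M^{-1}c=Qc/(1-c^\top Qc)$ and $1+c^\top M^{-1}c=1/(1-c^\top Qc)$. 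This gives exactly the stated center $-Qc/(1-c^\top Qc)$ and inverse-shape matrix $(1-c^\top Qc)\,M$.

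The step needing the most care is the side condition $1-c^\top y\ge0$: I must show it is automatic on the ellipsoid just obtained, so that squaring introduced no spurious points. My plan is to observe that the quadratic set is convex and contains $0$. If some point $y$ of the quadratic set had $c^\top y>1$, the segment from $0$ to $y$ would cross the hyperplane $c^\top y=1$ at some point $\bar y$. At $\bar y$ the inequality reads $\bar y^\top Q^{-1}\bar y\le (1-c^\top\bar y)^2=0$, which forces $\bar y=0$ and contradicts $c^\top\bar y=1$. Hence the two sets coincide.

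For the volumes, $\V(\E)=\det(Q)^{-1/2}$ by the normalized-volume convention. For the polar,
\[
    \det\bigl((1-c^\top Qc)M\bigr)=(1-c^\top Qc)^{n}\det(Q)^{-1}(1-c^\top Qc)=(1-c^\top Qc)^{n+1}/\det Q ,
\]
so $\V(\E^\circ)=(\det Q)^{1/2}(1-c^\top Qc)^{-(n+1)/2}$. Multiplying the two volumes gives the product formula, and taking logarithms gives the sum formula.

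For the final claim, since $Q\succ0$ we have $c^\top Qc=0$ if and only if $c=0$, which by the center formula holds if and only if the polar center is $0$. The product formula shows this is also equivalent to the volume product being $1$, and hence to the log-volume sum being $0$.
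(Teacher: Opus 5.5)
Your proposal is correct. It also differs from the paper in kind: the paper gives no derivation of this lemma, stating it as a standard fact with a pointer to Aubrun--Szarek and Kurzhanski--V\'alyi. Your support-function computation is therefore a self-contained route rather than a reconstruction of the paper's argument.

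What your route buys is twofold. First, everything is checked directly in the paper's own convention: $\E(c,Q)$ with $Q$ the inverse-shape matrix and $\V(\E(c,Q))=\det(Q)^{-1/2}$. References often parameterize by the shape matrix, so citing them requires a silent translation. Second, you make explicit the one delicate point, namely that squaring $\sqrt{y^\top Q^{-1}y}\le 1-c^\top y$ requires $1-c^\top y\ge 0$. Your segment/convexity argument closes this correctly. Alternatively, maximizing $c^\top y$ over the derived ellipsoid gives $(\sqrt{t}-t)/(1-t)\le 1$ with $t=c^\top Qc$.

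The remaining steps all check out:
\begin{itemize}
\item the Sherman--Morrison computation of the center and of the right-hand side $1/(1-c^\top Qc)$;
\item the determinant $(1-c^\top Qc)^{n+1}/\det Q$;
\item the equality case, via $Q\succ0$ and strict monotonicity of $t\mapsto(1-t)^{-(n+1)/2}$ on $[0,1)$.
\end{itemize}

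One small tightening. Your ``rank-one perturbation with positive determinant'' justification of $Q^{-1}-cc^\top\succ0$ tacitly uses eigenvalue interlacing. A cleaner one-line proof is Cauchy--Schwarz: $(c^\top y)^2\le (c^\top Qc)(y^\top Q^{-1}y)$, which gives $y^\top(Q^{-1}-cc^\top)y\ge(1-c^\top Qc)\,y^\top Q^{-1}y>0$ for $y\neq 0$.
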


\section{Convergence of the polarity process}\label{sec:convergence-polarity-process}
Our new analysis uses the potential
\begin{align}\label{eq: potential function}
    v(x) := \log \V (\mathrm{MinCE}((P-x)^\circ)).
\end{align}
We first introduce the notation for ease of later presentation. 

\subsection{Notation and abbreviations}\label{sect: proof_notations}
Fix $x\in\mathrm{int}(P)$ and suppress the iteration subscript. With context
clear, abbreviate the quantities in \eqref{eq: polar_extreme_points} and \eqref{eq: MinCE} by
\begin{alignat}{4}\label{eq: MinCE_abbrev}
    \tilde{a}_i & := \tilde{a}_i(x), \qquad
    & c &:= c(x), \qquad
    & Q &:= Q(x), \qquad
    & \tau(x) &:= c(x)^\top Q(x)c(x).
\end{alignat}
For the dual MinCE problem \eqref{eq: dual_MinCE}, define
\begin{subequations}\label{eq: dual_MinCE_quantities}
\begin{align}
    A(x, \lambda) :=  & \sum_{i=1}^m \lambda_i \tilde{a}_i(x) \tilde{a}_i(x)^\top, \quad
    c(x, \lambda) :=  \sum_{i=1}^m \lambda_i \tilde{a}_i(x), \\
    S(x, \lambda) :=  & A(x,\lambda) - c(x,\lambda)c(x, \lambda)^\top.
\end{align}
\end{subequations}
Define the full-support-span domain
\begin{align}
    \Delta_m^{\mathrm{fs}}
    :=
    \left\{\lambda\in\Delta_m:
    \operatorname{span}\left\{
    \begin{bmatrix}a_i\\1\end{bmatrix}:\lambda_i>0
    \right\}=\R^{n+1}\right\}.
\end{align}
The same spanning condition holds with
$[a_i^\top,1]^\top$ replaced by $[\tilde a_i(x)^\top,1]^\top$, because the
two families differ by nonzero projective scalings and an invertible linear
map. Thus $S(x,\lambda)\succ0$ exactly when
$\lambda\in\Delta_m^{\mathrm{fs}}$.
Let
\begin{subequations}\label{eq: dual_MinCE_solution_set}
\begin{align}
    \lambda^* \in & \Lambda(x), \\
    ~\text{where}~ \Lambda(x) := &  \Argmax_{\lambda \in \Delta_m^{\mathrm{fs}}} \log \det
    \begin{bmatrix}
        A(x, \lambda) & c(x, \lambda) \\
        c(x, \lambda)^\top & 1
    \end{bmatrix},
\end{align}
\end{subequations}
be dual-optimal, and define
\begin{align}\label{eq: AcS_with_optimal_lambda}
        A(x) := A(x, \lambda^*), \quad
        c(x) := c(x, \lambda^*), \quad
        S(x) := S(x, \lambda^*).
\end{align}
Although the dual optimizer need not be unique,
\eqref{eq: MinCE_primal_sol} gives
$c(x,\lambda^*)=c(x)$ and $S(x,\lambda^*)=Q(x)^{-1}/n$, where the right-hand
side is determined by the unique primal MinCE. Since
$A=S+cc^\top$, the three quantities in \eqref{eq: AcS_with_optimal_lambda}
are independent of the selected dual optimizer.

For $h\in\R^n$, choose $\bar t>0$ with
$x+th\in\mathrm{int}(P)$ for $t\in(-\bar t,\bar t)$, and set
\begin{align}\label{eq: x_along_h}
    x(t) := x + th,~ t\in(-\bar t,\bar t),
\end{align}
and, holding $\lambda^*\in\Lambda(x)$ fixed, parameterize
\begin{alignat}{2}\label{eq: parameterization_in_t}
     \tilde{a}_i(t) & :=  \tilde{a}_i(x(t)),\quad 
    A(t) && :=  A(x(t), \lambda^*), \notag \\ 
     c(t) &:=  c(x(t), \lambda^*),~
    S(t) && :=  S(x(t), \lambda^*).
\end{alignat}
We suppress $t$ at $t=0$; overdots denote the corresponding derivatives.

\subsection[Convexity and gradient of v]{Convexity and gradient of $v$}
We prove convexity and differentiability of $v$ through its dual MinCE value.
\begin{lemma}\label{lemma: v_in_S}
The function $v(x)$ can be expressed as
\begin{align*}
    v(x)  =  & \frac{1}{2} \log \det S(x) + \frac{n}{2}\log n
    = \max_{\lambda \in \Delta_m^{\mathrm{fs}}}
    \frac{1}{2} \log \det S(x, \lambda)  +  \frac{n}{2}\log n.
\end{align*}
\end{lemma}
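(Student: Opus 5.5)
The plan is to reduce both equalities to facts already recorded in the Preliminaries and in Section~\ref{sect: proof_notations}. For the first equality, start from the definition \eqref{eq: potential function}: since $\mathrm{MinCE}((P-x)^\circ)=\E(c(x),Q(x))$, the normalized volume formula gives $v(x)=-\tfrac12\log\det Q(x)$. Next, use the identity $S(x)=S(x,\lambda^*)=Q(x)^{-1}/n$ noted after \eqref{eq: AcS_with_optimal_lambda}; it follows from the primal recovery formula \eqref{eq: MinCE_primal_sol}. Taking determinants gives $\log\det S(x)=-\log\det Q(x)-n\log n$. Rearranging yields $v(x)=\tfrac12\log\det S(x)+\tfrac n2\log n$, which is a direct computation.

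For the second equality, first rewrite the dual objective with a Schur complement. For $\lambda\in\Delta_m$, the lower-right block of the matrix in \eqref{eq: dual_MinCE} equals $1>0$. Hence
\[
\log\det\begin{bmatrix}A(x,\lambda)&c(x,\lambda)\\ c(x,\lambda)^\top&1\end{bmatrix}
=\log\det\bigl(A(x,\lambda)-c(x,\lambda)c(x,\lambda)^\top\bigr)=\log\det S(x,\lambda).
\]
This identity holds in the extended sense, with $\log\det$ of a singular PSD matrix equal to $-\infty$. So the dual problem \eqref{eq: dual_MinCE} is exactly $\max_{\lambda\in\Delta_m}\log\det S(x,\lambda)$. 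The text after the definition of $\Delta_m^{\mathrm{fs}}$ states that $S(x,\lambda)\succ0$ iff $\lambda\in\Delta_m^{\mathrm{fs}}$. Therefore every $\lambda\notin\Delta_m^{\mathrm{fs}}$ has objective $-\infty$, and restricting the maximization to $\Delta_m^{\mathrm{fs}}$ does not change the supremum.

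It then remains to show that the supremum is attained and equals $\log\det S(x)$. I will take the existence of a dual optimizer and the primal recovery \eqref{eq: MinCE_primal_sol} as the standard MinCE duality results already quoted in the Preliminaries. If a self-contained justification is needed, I will use the following argument. The set $\Delta_m$ is compact, and $\lambda\mapsto\log\det S(x,\lambda)$ is upper semicontinuous with values in $[-\infty,\infty)$, so a maximizer exists. This maximizer is finite provided $\Delta_m^{\mathrm{fs}}\neq\emptyset$. That holds because the uniform weight vector lies in $\Delta_m^{\mathrm{fs}}$: the lifted vectors $[a_i^\top,1]^\top$ span $\R^{n+1}$ whenever $P$ is bounded. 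Indeed, boundedness forces $\{a_i\}$ to span $\R^n$ with $0$ in the interior of their convex hull, so no nonzero $(w,\beta)$ can satisfy $a_i^\top w=-\beta$ for all $i$. Any maximizer $\lambda^*$ then lies in $\Lambda(x)$, and its value is $\log\det S(x,\lambda^*)=\log\det S(x)$. Adding $\tfrac n2\log n$ completes the chain.

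The main obstacle is this last step: justifying attainment and strong duality, which is what ties the maximum value to the primal optimizer $Q(x)$ via \eqref{eq: MinCE_primal_sol}. I plan to rely on the cited duality theory (e.g.\ \cite{khachiyan1996rounding,sunfreund2004mvce}) for that link rather than reprove it.
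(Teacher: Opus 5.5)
Your proposal is correct and follows essentially the same route as the paper. Like the paper, you use the Schur complement to identify the dual objective with $\log\det S(x,\lambda)$, restrict the maximization to $\Delta_m^{\mathrm{fs}}$, and invoke $Q(x)=\frac1n S(x)^{-1}$ from the primal recovery formula to turn $-\frac12\log\det Q(x)$ into $\frac12\log\det S(x)+\frac n2\log n$. Your extra attainment argument fills in a detail the paper leaves implicit: upper semicontinuity on the compact set $\Delta_m$, plus nonemptiness of $\Delta_m^{\mathrm{fs}}$ because the lifted normals span $\R^{n+1}$.
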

\begin{proof}
    By the Schur complement, we have
    \begin{align*}
         \det
    \begin{bmatrix}
        A(x, \lambda) & c(x, \lambda) \\
        c(x, \lambda)^\top & 1
    \end{bmatrix}
    = \det (A(x,\lambda) - c(x,\lambda)c(x, \lambda)^\top) = \det S(x, \lambda).
    \end{align*}
    The dual MinCE problem \eqref{eq: dual_MinCE} is therefore equivalent to
    \begin{align*}
        \max_{\lambda \in \Delta_m^{\mathrm{fs}}} \log \det S(x, \lambda).
    \end{align*}
    For any optimizer $\lambda^*$, $S(x)=S(x,\lambda^*)$, so
    \begin{align*}
        \log \det S(x) = \max_{\lambda \in \Delta_m^{\mathrm{fs}}} \log \det S(x, \lambda).
    \end{align*}
    Moreover,
    \begin{align}
        v(x) = & \log \V(\mathrm{MinCE}((P-x)^\circ)) =  \log \det ( Q(x)^{-1/2}) =  -\frac{1}{2}\log \det Q(x) \notag \\
        = & \frac{1}{2} \log \det S(x) + \frac{n}{2}\log n, \notag
    \end{align}
    where the last equality uses $Q(x)=\frac1nS(x)^{-1}$ from
    \eqref{eq: MinCE_primal_sol} and
    \begin{align*}
        \det Q(x) = \det \frac{1}{n}S(x)^{-1} = (n^n \det S(x))^{-1}.
    \end{align*}
    This completes the proof.
\end{proof}

\begin{lemma}\label{lemma: convexity_of_F}
    Fix $\lambda \in \Delta_m^{\mathrm{fs}}$. The following function is convex in $x$:
    \begin{align*}
        F(x, \lambda):= \frac{1}{2}\log \det S(x, \lambda)
    \end{align*}
\end{lemma}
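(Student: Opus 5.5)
The plan is to remove the dependence of $S(x,\lambda)$ on $x$ through the polar points $\tilde a_i(x)$ by going back to the lifted $(n+1)$-dimensional matrix. Then I expand its determinant by the Cauchy--Binet formula and write $\log\det S(x,\lambda)$ as a log-sum-exp of convex functions of $x$.

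\textbf{Step 1: lifting.} As in the proof of Lemma~\ref{lemma: v_in_S}, the Schur complement gives $\det S(x,\lambda)=\det M(x)$, where
\[
M(x):=\sum_{i=1}^m \lambda_i \begin{bmatrix}\tilde a_i(x)\\1\end{bmatrix}\begin{bmatrix}\tilde a_i(x)\\1\end{bmatrix}^\top .
\]
Write $b_i:=[a_i^\top,1]^\top$ and $T(x):=\begin{bmatrix} I_n & \mathbf0\\ -x^\top & 1\end{bmatrix}$, so that $\det T(x)=1$. A direct check gives
\[
\begin{bmatrix}\tilde a_i(x)\\1\end{bmatrix}=\frac{1}{1-a_i^\top x}\,T(x)\,b_i .
\]
Hence $M(x)=T(x)\bigl(\sum_i \mu_i(x) b_ib_i^\top\bigr)T(x)^\top$ with $\mu_i(x):=\lambda_i(1-a_i^\top x)^{-2}$. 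It follows that
\[
\log\det S(x,\lambda)=\log\det\Bigl(\sum_{i=1}^m \mu_i(x)\, b_ib_i^\top\Bigr).
\]
This is exactly the projective-scaling remark made after the definition of $\Delta_m^{\mathrm{fs}}$, now used quantitatively.

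\textbf{Step 2: Cauchy--Binet.} Let $B=[b_1,\dots,b_m]\in\R^{(n+1)\times m}$. Then
\[
\det\Bigl(\sum_i \mu_i b_ib_i^\top\Bigr)=\sum_{J\subseteq[m],\,|J|=n+1}\det(B_J)^2\prod_{i\in J}\mu_i(x).
\]
Keep only the index sets $J$ with $\det(B_J)\neq0$ and $J\subseteq\operatorname{supp}\lambda$. Because $\lambda\in\Delta_m^{\mathrm{fs}}$, at least one such $J$ exists, so the sum is strictly positive on $\mathrm{int}(P)$. Each retained term equals $\exp(\phi_J(x))$, where
\[
\phi_J(x)=\log\det(B_J)^2+\sum_{i\in J}\log\lambda_i-2\sum_{i\in J}\log(1-a_i^\top x).
\]
Each $\phi_J$ is convex on $\mathrm{int}(P)$, because $-\log(1-a_i^\top x)$ is convex there.

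\textbf{Step 3: conclusion.} Now $2F(x,\lambda)=\log\sum_J e^{\phi_J(x)}$. The log-sum-exp function is convex and nondecreasing in each argument, and each $\phi_J$ is convex, so the composition is convex. Hence $F(\cdot,\lambda)$ is convex.

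The only step needing care is Step~1: one has to verify the factorization through $T(x)$ and that $\det T(x)=1$, so that no extra $x$-dependent factor enters. After that, the Cauchy--Binet expansion makes convexity immediate. One also has to confirm that the full-support-span condition keeps the argument of the logarithm positive, which Step~2 does.
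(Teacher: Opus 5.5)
Your proposal is correct and follows essentially the same route as the paper. Both arguments reduce to the lifted matrix via the Schur complement, use the unit-determinant triangular factor to pull out the scalings $(1-a_i^\top x)^{-1}$, expand by Cauchy--Binet, and conclude by log-sum-exp convexity. The only difference is cosmetic: you factor out $T(x)$ at the matrix level before applying Cauchy--Binet, whereas the paper applies Cauchy--Binet to $Y(x)\Lambda Y(x)^\top$ first and then factors each $(n+1)\times(n+1)$ minor.
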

\begin{proof}
    Define lifted vectors:
    \begin{align*}
        y_i(x) = \begin{bmatrix}
            \tilde{a}_i(x)\\ 1
        \end{bmatrix} \in \R^{n+1},~
        Y(x):= [y_1(x),\cdots,y_m(x)] \in \R^{(n+1)\times m}.
    \end{align*}
    Let $\Lambda = \mathrm{Diag}(\lambda)$. Then it holds that
    \begin{align*}
        Y(x) \Lambda Y(x)^\top = \sum_{i=1}^m \lambda_i y_i(x)y_i(x)^\top =
        \begin{bmatrix}
            A(x, \lambda) & c(x, \lambda) \\
            c(x, \lambda)^\top & 1
        \end{bmatrix}.
    \end{align*}
    By the Schur complement,
    \begin{align}
        \det (Y(x) \Lambda Y(x)^\top) =
        \det(A(x, \lambda) - c(x, \lambda)c(x, \lambda)^\top) = \det S(x, \lambda). \notag
    \end{align}
    Let $B:=Y(x)\Lambda^{1/2}\in\R^{(n+1)\times m}$. Cauchy--Binet
    \cite[Section 0.8.7]{horn2012matrix} gives
    \begin{align}\label{eq: cauchy_binet_1}
         \det S(x,\lambda) = & \det (BB^\top) = \sum_{I\subseteq [m], |I|=n+1} \det(B_I) \det(B_I^\top) \notag \\
         = & \sum_{I\subseteq [m], |I|=n+1} \det(B_I)^2,
    \end{align}
    where $B_I$ is the square submatrix formed by columns indexed by $I$.
    Full-dimensionality gives $m\ge n+1$, and diagonal $\Lambda^{1/2}$ gives
    \begin{align}\label{eq: cauchy_binet_2}
        \det(B_I) = \det(Y_I) \prod_{i\in I}\sqrt{\lambda_i},
    \end{align}
    where $Y_I$ is formed analogously. For $i\in I$,
    \begin{align*}
        y_i(x) = & \begin{bmatrix}
            \tilde{a}_i(x)\\ 1
        \end{bmatrix} =  \frac{1}{1-a_i^\top x}
        \begin{bmatrix}
            a_i \\ 1-a_i^\top x
        \end{bmatrix}
        = \frac{1}{1-a_i^\top x}
        \begin{bmatrix}
            I_n & \mathbf{0} \\
            -x^\top & 1
        \end{bmatrix}
        \begin{bmatrix}
            a_i \\ 1
        \end{bmatrix}.
    \end{align*}
    For $I:=\{i_1,\ldots,i_{n+1}\}$, write $Y_I$ as
    \begin{align*}
        Y_I =
        \begin{bmatrix}
            I_n & \mathbf{0} \\
            -x^\top & 1
        \end{bmatrix}
        \underbrace{
        \begin{bmatrix}
            a_{i_1} & \cdots & a_{i_{n+1}} \\
            1 & \cdots & 1
        \end{bmatrix}
        }_{H_I}
        \begin{bmatrix}
            \frac{1}{1-a_{i_1}^\top x} & & \\
            & \ddots& \\
            & & \frac{1}{1 - a_{i_{n+1}}^\top x}
        \end{bmatrix}.
    \end{align*}
    The first factor is triangular with determinant $1$; hence
    \begin{align}\label{eq: cauchy_binet_3}
        \det(Y_I) = \det(H_I) \prod_{i\in I} \frac{1}{1-a_i^\top x}.
    \end{align}
    Combining \eqref{eq: cauchy_binet_1}--\eqref{eq: cauchy_binet_3} gives
    \begin{align*}
        & \det S(x, \lambda) = \sum_{\substack{I\subseteq[m]\\|I|=n+1}} \beta_I(\lambda) \prod_{i\in I} \frac{1}{(1-a_i^\top x)^2}, \\
        & \text{where}~ \beta_I(\lambda):=  \left(\prod_{i\in I}\lambda_i\right)\det(H_I)^2 \geq 0.
    \end{align*}
    Membership in $\Delta_m^{\mathrm{fs}}$ ensures that at least one
    $\beta_I(\lambda)$ is positive. Hence
    \begin{align*}
        F(x, \lambda) = \frac{1}{2}\log \left( \sum_{\stackrel{I\subseteq[m], |I|=n+1, }{\beta_I(\lambda) > 0}} \exp\left( \sum_{i\in I} - 2\log(1-a_i^\top x) + \log \beta_I(\lambda) \right)\right).
    \end{align*}
    Convexity of $-\log t$ on $t>0$, affinity of $1-a_i^\top x$, and
    log-sum-exp convexity show that $F(\cdot,\lambda)$ is convex.
\end{proof}

The required first-order formulas are as follows.
\begin{lemma}\label{lemma: differentiation}
    For $h\in\R^n$, let $x(t)$ and the associated quantities be as in
    \eqref{eq: x_along_h} and \eqref{eq: parameterization_in_t}. At $t=0$, it holds that 
    \begin{align*}
        \dot{\tilde{a}}_i = & (\tilde{a}_i^\top h)\tilde{a}_i, \quad \dot{c} = Ah, \\
        \dot{A} = & 2 \sum_{i=1}^m \lambda_i^* (\tilde{a}_i^\top h) \tilde{a}_i \tilde{a}_i^\top, \\
        \dot{S} = & 2 \sum_{i=1}^m \lambda_i^* (\tilde{a}_i^\top h) \tilde{a}_i \tilde{a}_i^\top - Ahc^\top -c h^\top A.
    \end{align*}
\end{lemma}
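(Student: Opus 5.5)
Since $\lambda^*$ is held fixed along $x(t)=x+th$ and all quantities in \eqref{eq: parameterization_in_t} are finite sums of smooth functions of $t$, everything reduces to differentiating $\tilde a_i(t)$ and then applying the product rule. The plan is to compute $\dot{\tilde a}_i$ first and obtain the other three formulas from it by linearity.

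\textbf{Derivative of $\tilde a_i$.} From \eqref{eq: polar_extreme_points}, $\tilde a_i(t)=a_i/(1-a_i^\top x-t\,a_i^\top h)$, and the denominator is positive for $|t|<\bar t$ because $x(t)\in\mathrm{int}(P)$. Differentiating at $t=0$ gives
\[
\dot{\tilde a}_i=\frac{a_i\,(a_i^\top h)}{(1-a_i^\top x)^2}.
\]
Writing $a_i=(1-a_i^\top x)\tilde a_i$, this becomes $(\tilde a_i^\top h)\tilde a_i$.

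\textbf{Derivatives of $c$ and $A$.} Since $c(t)=\sum_i\lambda_i^*\tilde a_i(t)$,
\[
\dot c=\sum_i\lambda_i^*(\tilde a_i^\top h)\tilde a_i=\sum_i\lambda_i^*\tilde a_i\tilde a_i^\top h=Ah.
\]
For $A(t)=\sum_i\lambda_i^*\tilde a_i(t)\tilde a_i(t)^\top$, the product rule gives
\[
\dot{\tilde a}_i\tilde a_i^\top+\tilde a_i\dot{\tilde a}_i^\top=2(\tilde a_i^\top h)\tilde a_i\tilde a_i^\top,
\]
and summing with weights $\lambda_i^*$ yields the stated $\dot A$.

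\textbf{Derivative of $S$.} Since $S=A-cc^\top$, we have $\dot S=\dot A-\dot c\,c^\top-c\,\dot c^\top$. Substituting $\dot c=Ah$ and using the symmetry of $A$, so that $(Ah)^\top=h^\top A$, gives
\[
\dot S=2\sum_i\lambda_i^*(\tilde a_i^\top h)\tilde a_i\tilde a_i^\top-Ahc^\top-ch^\top A.
\]

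There is no real obstacle here. The only points needing care are that $\lambda^*$ does not depend on $t$ (it is fixed by the parameterization \eqref{eq: parameterization_in_t}) and that the rewriting of $\dot{\tilde a}_i$ in terms of $\tilde a_i$ is valid, which holds because the denominators stay positive.
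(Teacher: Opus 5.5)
Your proposal is correct and follows the paper's proof essentially step for step: you differentiate $\tilde a_i(t)$ first, rewrite $\dot{\tilde a}_i$ as $(\tilde a_i^\top h)\tilde a_i$, and then obtain $\dot c$, $\dot A$, and $\dot S$ by linearity and the product rule with $\lambda^*$ held fixed. Your remarks on the symmetry of $A$ and on the positivity of the denominators are accurate and appropriate.
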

\begin{proof}
    \begin{itemize}
    \item \textbf{Differentiating} $\tilde{a}_i$.
    From $\tilde{a}_i(t) = \frac{a_i}{1 - a_i^\top (x+ th)}$, it holds that 
    \begin{align}\label{eq: differentiating_tilde_a}
        \dot{\tilde{a}}_i =\frac{a_i^\top h}{(1-a_i^\top x - ta_i^\top h)^2} a_i\Bigg|_{t=0} = \frac{a_i^\top h}{(1-a_i^\top x)^2}a_i = (\tilde{a}_i^\top h)\tilde{a}_i.
    \end{align}
    \item \textbf{Differentiating} $c$. Since
    $c(t)=\sum_i\lambda_i^*\tilde a_i(t)$, it holds that 
    \begin{align}\label{eq: differentiating_c}
        \dot{c} = \sum_{i=1}^m \lambda^*_i \dot{\tilde{a}}_i
        \stackrel{\eqref{eq: differentiating_tilde_a}}{=} & \sum_{i=1}^m \lambda_i^* (\tilde{a}_i^\top h)\tilde{a}_i  = \sum_{i=1}^m \lambda_i^* (\tilde{a}_i\tilde{a}_i^\top) h= Ah.
    \end{align}
    \item \textbf{Differentiating} $A$. The product rule applied to $A(t)$ gives
    \begin{align}\label{eq: differentiating_A}
        \dot{A} = \sum_{i=1}^m \lambda_i^*(\dot{\tilde{a}}_i \tilde{a}_i^\top + \tilde{a}_i\dot{\tilde{a}}_i^\top) \stackrel{\eqref{eq: differentiating_tilde_a}}{=} 2 \sum_{i=1}^m \lambda_i^* (\tilde{a}_i^\top h) \tilde{a}_i \tilde{a}_i^\top.
    \end{align}
    \item \textbf{Differentiating} $S$. Applying the product rule to $S(t)$ gives
    \begin{align}
        \dot{S} = \dot{A} - \dot{c}c^\top - c \dot{c}^\top
            \stackrel{\eqref{eq: differentiating_c}\eqref{eq: differentiating_A}}{=} 2 \sum_{i=1}^m \lambda_i^* (\tilde{a}_i^\top h) \tilde{a}_i \tilde{a}_i^\top - Ahc^\top -c h^\top A.  \notag
    \end{align}
    \end{itemize}
    This completes the proof. 
\end{proof}
\begin{proposition}\label{proposition: v_gradient}
    The function $v$ is convex and differentiable on $\mathrm{int}(P)$, with gradient given by $\nabla v(x) = (n+1)c(x)$.
\end{proposition}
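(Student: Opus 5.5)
Convexity follows by combining Lemma~\ref{lemma: v_in_S} with Lemma~\ref{lemma: convexity_of_F}. By Lemma~\ref{lemma: v_in_S},
\[
v(x)=\max_{\lambda\in\Delta_m^{\mathrm{fs}}}F(x,\lambda)+\tfrac n2\log n .
\]
This is a pointwise supremum of functions that are convex in $x$ by Lemma~\ref{lemma: convexity_of_F}, so $v$ is convex on $\mathrm{int}(P)$. It is finite there, because the supremum is attained by the MinCE dual.

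For differentiability we use a Danskin-type argument that only requires a lower support function. Fix $x\in\mathrm{int}(P)$ and $\lambda^*\in\Lambda(x)$, and define
\[
\phi(y):=F(y,\lambda^*)+\tfrac n2\log n .
\]
Then $\phi\le v$ near $x$, with equality at $y=x$. The function $\phi$ is smooth, since $S(y,\lambda^*)\succ0$ for $y$ near $x$ (the spanning condition does not depend on $y$). We compute $\nabla\phi(x)$ using Lemma~\ref{lemma: differentiation}:
\[
\tfrac{d}{dt}\phi(x(t))\big|_{t=0}=\tfrac12\tr(S^{-1}\dot S).
\]
The identity to establish is $\tfrac12\tr(S^{-1}\dot S)=(n+1)c^\top h$.

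Substituting the formula for $\dot S$ gives
\[
\tfrac12\tr(S^{-1}\dot S)=\sum_i\lambda_i^*(\tilde a_i^\top h)\,\tilde a_i^\top S^{-1}\tilde a_i-c^\top S^{-1}Ah .
\]
We need two facts from the MinCE optimality conditions.

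\textbf{Complementary slackness.} Every $i$ with $\lambda_i^*>0$ is tight, i.e. $(\tilde a_i-c)^\top Q(\tilde a_i-c)=1$. Since $Q=S^{-1}/n$, this reads $(\tilde a_i-c)^\top S^{-1}(\tilde a_i-c)=n$.

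\textbf{Rewriting the quadratic form.} Expanding, $\tilde a_i^\top S^{-1}\tilde a_i=n+2c^\top S^{-1}\tilde a_i-c^\top S^{-1}c$ on the support. It is more convenient to work in the lifted form: with $M=\begin{bmatrix}A&c\\c^\top&1\end{bmatrix}$ and $y_i=(\tilde a_i,1)$, we have $y_i^\top M^{-1}y_i=(\tilde a_i-c)^\top S^{-1}(\tilde a_i-c)+1=n+1$ on the support.

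The first term of $\tfrac12\tr(S^{-1}\dot S)$ then becomes
\[
\sum_i\lambda_i^*(\tilde a_i^\top h)\bigl(n+1+2c^\top S^{-1}\tilde a_i-c^\top S^{-1}c-1\bigr).
\]
This uses $\tilde a_i^\top S^{-1}\tilde a_i=n+2c^\top S^{-1}\tilde a_i-c^\top S^{-1}c$, since $n+1-1=n$. Using $\sum_i\lambda_i^*\tilde a_i=c$ and $\sum_i\lambda_i^*\tilde a_i\tilde a_i^\top=A$, it equals
\[
n\,c^\top h+2c^\top S^{-1}Ah-(c^\top S^{-1}c)(c^\top h).
\]
Subtracting $c^\top S^{-1}Ah$ leaves
\[
n\,c^\top h+c^\top S^{-1}Ah-(c^\top S^{-1}c)(c^\top h).
\]
Finally $A=S+cc^\top$ gives $c^\top S^{-1}Ah=c^\top h+(c^\top S^{-1}c)(c^\top h)$. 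Hence the total is $(n+1)c^\top h$, and $\nabla\phi(x)=(n+1)c(x)$. This bookkeeping, and making sure the tightness identity is used only on the support of $\lambda^*$, is the computational core.

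It remains to pass from $\phi$ to $v$. Since $v$ is convex and $\phi\le v$ with $\phi(x)=v(x)$, every subgradient $g\in\partial v(x)$ satisfies
\[
\phi(x+th)-\phi(x)\le v(x+th)-v(x)
\]
for small $t$ of both signs. Dividing by $t>0$ and by $t<0$ and letting $t\to0$ yields
\[
\nabla\phi(x)^\top h\le v'(x;h)\quad\text{and}\quad -v'(x;-h)\le\nabla\phi(x)^\top h .
\]
Since $-v'(x;-h)\le v'(x;h)$ always holds, this alone does not close the argument. Instead we argue directly on the subdifferential. For any $g\in\partial v(x)$ we have $v(y)\ge v(x)+g^\top(y-x)$, but we need the reverse inequality, so it is better to run the argument through $\phi$: $v-\phi\ge0$ attains its minimum $0$ at $x$. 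Hence $v(x+th)-v(x)\ge\phi(x+th)-\phi(x)$, which gives a lower bound only.

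The required upper bound comes from convexity of $v$. The function $\psi:=v-\phi$ is minimized at $x$, so $0\in\partial\psi(x)$; but $\psi$ need not be convex, so this does not settle the matter. The cleanest route is the following. The convex function $v$ has one-sided derivative $v'(x;h)=\max_{g\in\partial v(x)}g^\top h$, and the lower bound gives $v'(x;h)\ge\nabla\phi^\top h$ for every $h$. Applying this to $-h$ gives $\max_{g}g^\top(-h)\ge-\nabla\phi^\top h$, i.e. $\min_g g^\top h\le\nabla\phi^\top h\le\max_g g^\top h$. This only says that $\nabla\phi(x)$ lies in $\partial v(x)$; indeed $\nabla\phi(x)\in\partial v(x)$ follows from $v\ge\phi$ and the convexity of $\phi$, since $v(y)\ge\phi(y)\ge\phi(x)+\nabla\phi(x)^\top(y-x)$.

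To show that $\partial v(x)$ is a singleton, we use the fact that $c(x)$ is uniquely determined by the unique primal MinCE, independent of the choice of $\lambda^*$. By Danskin's theorem for a maximum over a compact family, $\partial v(x)$ is the convex hull of the gradients $\nabla_xF(x,\lambda^*)$ over all $\lambda^*\in\Lambda(x)$. This requires restricting the feasible set to a compact subset of $\Delta_m^{\mathrm{fs}}$ near the optimum, which is possible because near-optimal $\lambda$ keep $\det S$ bounded away from zero, a continuity argument. Every such gradient equals $(n+1)c(x)$, so $\partial v(x)=\{(n+1)c(x)\}$. This compactness justification for Danskin's theorem is the step I expect to be the main obstacle.
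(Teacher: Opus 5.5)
Your proposal is correct and takes essentially the same route as the paper. Convexity comes from writing $v$ as a pointwise maximum of the convex $F(\cdot,\lambda)$; differentiability comes from Danskin's theorem together with the complementary-slackness computation, which shows that every dual optimizer $\lambda^*\in\Lambda(x)$ yields the same derivative $(n+1)c(x)^\top h$. The compactness step you flag as the main obstacle is handled in the paper by applying Danskin to $\det S(x,\lambda)$ (not its logarithm) over the whole compact simplex $\Delta_m$, whose maximizers lie in $\Delta_m^{\mathrm{fs}}$ because the maximum is positive, and then applying the chain rule through $\tfrac12\log$; your restriction to a compact superlevel set $\{\lambda:\det S(x,\lambda)\ge\delta\}$ also works, and the abandoned support-function detour can be dropped.
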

\begin{proof}
    Lemmas \ref{lemma: v_in_S} and \ref{lemma: convexity_of_F} make $v$ a
    pointwise maximum of convex functions, and hence convex.  Define 
    \[
        D(x,\lambda):=\det S(x,\lambda),
        \qquad
        d(x):=\max_{\lambda\in\Delta_m}D(x,\lambda)>0.
    \]
    The maximizers of $D(x,\cdot)$ are precisely $\Lambda(x)$ and belong to
    $\Delta_m^{\mathrm{fs}}$. Since $D$ is smooth and $\Delta_m$ is compact,
    Danskin's theorem \cite[Proposition~B.22]{bertsekas1999nonlinear} and the
    chain rule applied to $v(x)=\frac12\log d(x)+\frac n2\log n$ give
    \begin{align}
        v'(x;h)
        &=\frac{1}{2d(x)}
          \max_{\lambda^*\in\Lambda(x)}D_xD(x,\lambda^*)[h]
          =\max_{\lambda^*\in\Lambda(x)}D_xF(x,\lambda^*)[h].\notag
    \end{align}
    We now evaluate the last derivative. First,
    \begin{align*}
        D_xF(x, \lambda^*)[h]
        &= \frac{d}{dt}F(x+th,\lambda^*)\Big|_{t=0} \\
        &= \frac{d}{dt}\left(\frac12\log\det S(t)\right)\Big|_{t=0}
        = \frac12\tr(S^{-1}\dot S).
    \end{align*}
    Lemma \ref{lemma: differentiation} and the fact that $\tr(xy^\top)=y^\top x$
    give
    \begin{align}\label{eq: DxF_as_trace_2}
        D_xF(x, \lambda^*)[h] = & \frac{1}{2}\tr\left(S^{-1} \left(2 \sum_{i=1}^m \lambda_i^* (\tilde{a}_i^\top h) \tilde{a}_i \tilde{a}_i^\top - Ahc^\top -c h^\top A \right)\right) \notag\\
        = & \sum_{i=1}^m \lambda_i^* (\tilde{a}_i^\top h) \tilde{a}_i ^\top S^{-1} \tilde{a}_i - c^\top S^{-1} Ah.
    \end{align}
    For $i\in[m]$ with $\lambda_i^*>0$,
    dual optimality, complementary slackness, and $Q=\frac1nS^{-1}$ give
    \begin{align*}
        1 = (\tilde{a}_i - c)^\top Q (\tilde{a}_i - c) = \frac{1}{n}(\tilde{a}_i - c)^\top S^{-1} (\tilde{a}_i - c).
    \end{align*}
    Expanding gives
    \begin{align}\label{eq: a_S_a_expanded}
        \tilde{a}_i^\top S^{-1}\tilde{a}_i = n + 2c^\top S^{-1}\tilde{a}_i -c^\top S^{-1}c.
    \end{align}
    Substituting \eqref{eq: a_S_a_expanded} into
    \eqref{eq: DxF_as_trace_2}, with zero-weight terms vanishing, yields
    \begin{align}
        & \sum_{i=1}^m \lambda_i^* (\tilde{a}_i^\top h) \tilde{a}_i ^\top S^{-1} \tilde{a}_i  \notag \\
        = & (n-c^\top S^{-1}c )\sum_{i=1}^m \lambda_i^* (\tilde{a}_i^\top h) + 2 \sum_{i=1}^m \lambda_i^* (\tilde{a}_i^\top h) (c^\top S^{-1}\tilde{a}_i) \notag\\
        = & (n-c^\top S^{-1}c) \left(\sum_{i=1}^m \lambda^*_i \tilde{a}_i\right)^\top h + 2c^\top S^{-1} \left(\sum_{i=1}^m \lambda_i^*\tilde{a}_i\tilde{a}_i^\top\right)h \notag \\
        = & (n-c^\top S^{-1}c)c^\top h + 2c^\top S^{-1}Ah,\label{eq: first_term_in_derivative_of_F}
    \end{align}
    Combining \eqref{eq: DxF_as_trace_2} and
    \eqref{eq: first_term_in_derivative_of_F}, with $S=A-cc^\top$, gives
    \begin{align*}
        D_xF(x, \lambda^*)[h] = & (n-c^\top S^{-1}c)c^\top h + c^\top S^{-1}Ah \\
        = & n c^\top h + c^\top S^{-1}(A - cc^\top)h\\
        = & n c^\top h + c^\top S^{-1}S h = (n+1) c^\top h.
    \end{align*}
    The unique MinCE center gives
    $v'(x;h)=(n+1)c(x)^\top h$, and therefore we conclude that $\nabla v(x)$ is exactly $(n+1)c(x)$.
\end{proof}

\subsection[Local quadratic growth of v around the MaxIE center]{Local quadratic growth of $v$ around the MaxIE center}
We prove that $v$ grows quadratically near the MaxIE center $x^*$: some
neighborhood $U_{\mathrm{QG}}$ and $\mu>0$ satisfy
\begin{align*}
    v(x) - v(x^*) \geq \mu \|x-x^*\|^2
\end{align*}
for all $x\in U_{\mathrm{QG}}$. In this subsection, fix some $\lambda^*\in\Lambda(x^*)$ and define
\begin{align}\label{eq: valued_function_with_optimal_dual}
    g(x) := F(x, \lambda^*) + \frac{n}{2}\log n.
\end{align}
\begin{lemma}\label{lemma: facts_about_g}
Let $x^*$ be the center of $\MaxIE(P)$ and let
$\lambda^*\in\Lambda(x^*)$. Then:
\begin{enumerate}
    \item $g(x) \leq v(x)$ for all $x \in \mathrm{int}(P)$, and $g(x^*) = v(x^*)$.
    \item $c(x^*)=\mathbf0$ and
    $v(x^*)=-\log\V(\MaxIE(P))$; moreover,\\
    $\MaxIE(P)=\E(x^*,Q(x^*)^{-1})$.
    \item $Q(x^*) = \frac{1}{n}S(x^*)^{-1}$.
    \item for every $i\in [m]$ with $\lambda^*_i > 0$, we have $\tilde{a}_i(x^*)^\top S(x^*)^{-1}\tilde{a}_i(x^*) = n$.
\end{enumerate}
\end{lemma}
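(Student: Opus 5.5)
The plan is to prove item~2 first, because it carries the only geometric content. It comes from a polarity--volume comparison based on Lemma~\ref{lemma: useful_lemma}. Items~1, 3 and~4 then follow from Lemma~\ref{lemma: v_in_S}, \eqref{eq: MinCE_primal_sol} and complementary slackness.

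\textbf{Item 2.} Set $x=x^*$, write $K:=(P-x^*)^\circ$, and let $\E^*:=\MaxIE(P)-x^*$, which is centered at $\mathbf0$.
\begin{enumerate}
\item Since $P$ is bounded, $\mathbf0\in\mathrm{int}(K)$. So every ellipsoid $\E\supseteq K$ contains the origin in its interior, and Lemma~\ref{lemma: useful_lemma} applies to $\E$.
\item Polarity reverses inclusion and $K^\circ=P-x^*$. Hence $\E\supseteq K$ gives $\E^\circ\subseteq P-x^*$, and so $\V(\E^\circ)\le\V(\E^*)$.
\item Lemma~\ref{lemma: useful_lemma} gives $\V(\E)\V(\E^\circ)=(1-c^\top Qc)^{-(n+1)/2}\ge1$. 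Combining with the previous step,
\[
\V(\E)\ \ge\ \frac{1}{\V(\E^\circ)}\ \ge\ \frac{1}{\V(\E^*)}.
\]
\item The bound is attained. Write $\E^*=\E(\mathbf0,M)$. By Lemma~\ref{lemma: useful_lemma} with $c=\mathbf0$, its polar is $(\E^*)^\circ=\E(\mathbf0,M^{-1})$. This ellipsoid covers $K$, because $\E^*\subseteq P-x^*$. It is centered, so its volume product with $\E^*$ is $1$ and $\V((\E^*)^\circ)=1/\V(\E^*)$.
\item The MinCE of $K$ is unique, so $\mathrm{MinCE}(K)=(\E^*)^\circ$.
\end{enumerate}
Reading this off gives $c(x^*)=\mathbf0$, $Q(x^*)=M^{-1}$ and $v(x^*)=\log\V((\E^*)^\circ)=-\log\V(\MaxIE(P))$. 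Inverting $Q(x^*)=M^{-1}$ gives $\E^*=\E(\mathbf0,Q(x^*)^{-1})$. Shifting back, $\MaxIE(P)=\E(x^*,Q(x^*)^{-1})$, where the second argument is read as the shape matrix in the parameterization of Algorithm~\ref{alg: polarity process}.

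\textbf{Item 3.} This is \eqref{eq: MinCE_primal_sol} at $x=x^*$, together with the convention $S(x^*)=S(x^*,\lambda^*)$, which does not depend on the choice of $\lambda^*$.

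\textbf{Item 4.} For any $i$ with $\lambda_i^*>0$, complementary slackness makes the $i$-th MinCE constraint active: $(\tilde a_i-c)^\top Q(\tilde a_i-c)=1$. This is the same relation used to derive \eqref{eq: a_S_a_expanded}. Substituting $c(x^*)=\mathbf0$ from item~2 and $Q(x^*)=\frac1nS(x^*)^{-1}$ from item~3 gives $\tilde a_i(x^*)^\top S(x^*)^{-1}\tilde a_i(x^*)=n$.

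\textbf{Item 1.} By definition of $\Lambda(x^*)$, the fixed $\lambda^*$ lies in $\Delta_m^{\mathrm{fs}}$. Lemma~\ref{lemma: v_in_S} writes $v(x)$ as a maximum over $\Delta_m^{\mathrm{fs}}$, so $g(x)=F(x,\lambda^*)+\frac n2\log n\le v(x)$ for every $x\in\mathrm{int}(P)$. At $x=x^*$, $\lambda^*$ is a maximizer, so $g(x^*)=\frac12\log\det S(x^*)+\frac n2\log n=v(x^*)$.

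The main obstacle is item~2. Two points need care there: that every covering ellipsoid of $K$ contains the origin in its interior, so Lemma~\ref{lemma: useful_lemma} applies to all competitors, and that uniqueness of the MinCE lets us identify the optimizer. The other items are bookkeeping.
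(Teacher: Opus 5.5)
Your proof is correct and follows essentially the same route as the paper. Both arguments rest on polarity reversal, the volume-product bound $\V(\E)\V(\E^\circ)\ge 1$ (equality iff centered), and the observation that $(\MaxIE(P)-x^*)^\circ$ is a centered covering ellipsoid of $(P-x^*)^\circ$. The only difference is the closing step: you invoke uniqueness of the MinCE to identify it as $(\MaxIE(P)-x^*)^\circ$, whereas the paper starts from the MinCE $L^*$, sandwiches $1\le\V(L^*)\V((L^*)^\circ)\le 1$, and then reads off $c(x^*)=\mathbf0$ from the equality case of Lemma~\ref{lemma: useful_lemma} together with uniqueness of the MaxIE.
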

\begin{proof}
\begin{enumerate}
    \item Use the definition of $v$ and $\lambda^*\in\Lambda(x^*)$.
    \item Write $ L^*:=\mathrm{MinCE}((P-x^*)^\circ)=\E(c(x^*),Q(x^*))$ and $\E^*:=x^*+(L^*)^\circ$.
    The origin lies in the interior of $(P-x^*)^\circ$ and hence in the
    interior of $L^*$.
    Since $L^*\supseteq(P-x^*)^\circ$, polarity reversal and the bipolar
    theorem give $\E^*\subseteq P$, and hence
    $\V(\E^*)\leq\V(\MaxIE(P))$. Conversely,
    $\MaxIE(P)-x^*\subseteq P-x^*$ implies
    $(\MaxIE(P)-x^*)^\circ\supseteq(P-x^*)^\circ$. Thus
    $(\MaxIE(P)-x^*)^\circ$ is feasible for the MinCE problem defining $L^*$,
    so MinCE optimality and
    Lemma~\ref{lemma: useful_lemma} give
    \[
        \V(L^*)\leq\V((\MaxIE(P)-x^*)^\circ)
        =\frac{1}{\V(\MaxIE(P))}.
    \]
    Translation invariance of volume and
    Lemma~\ref{lemma: useful_lemma} applied to $L^*$ yield
    \[
    \begin{aligned}
        1
        &\leq\V(L^*)\V((L^*)^\circ)
        =\V(L^*)\V(\E^*)\\
        &\leq\frac{1}{\V(\MaxIE(P))}\V(\MaxIE(P))
        =1.
    \end{aligned}
    \]
    Therefore equality holds throughout. The equality case in
    Lemma~\ref{lemma: useful_lemma} gives $c(x^*)=\mathbf0$, while
    $\V(\E^*)=\V(\MaxIE(P))$ and uniqueness of the MaxIE give
    $\E^*=\MaxIE(P)$. Applying the centered polar formula to $L^*$ now gives
    $\MaxIE(P)=\E(x^*,Q(x^*)^{-1})$, and
    $\V(L^*)=1/\V(\MaxIE(P))$ gives
    $v(x^*)=-\log\V(\MaxIE(P))$.
    \item Apply \eqref{eq: MinCE_primal_sol} with
    $c(x^*)=\mathbf{0}$.
    \item Repeat the derivation of \eqref{eq: a_S_a_expanded} with
    $c(x^*)=\mathbf{0}$.
\end{enumerate}
\end{proof}

For $h\in\R^n$, choose $\bar t>0$ with
$x^*+th\in\mathrm{int}(P)$ for $|t|<\bar t$, and define
\begin{align}\label{eq: g_directional_derivative}
        g_h(t) := g(x^* + th), \qquad t\in(-\bar t,\bar t).
\end{align}
\begin{lemma}\label{lemma: derivative_of_g}
    Given a direction $h\in \R^n$, define quantities
    \begin{align}\label{eq: g_derivative_components}
        & p_h := S(x^*)^{1/2} h, \quad
        w_i := S(x^*)^{-1/2} \tilde{a}_i(x^*), \notag \\
        & T_h := \sum_{i=1}^m \lambda^*_i (\tilde{a}_i(x^*)^\top h)(w_iw_i^\top).
    \end{align}
    The derivative of $g_h$ at $0$ satisfies $g_h'(0) = 0$, i.e., $\nabla g(x^*) = \mathbf{0}$. In addition, 
    \begin{align*}
            g''_h(0) = (3n-1)\|p_h\|^2 - 2\|T_h\|_F^2.
    \end{align*}
\end{lemma}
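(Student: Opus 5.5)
The plan is to differentiate $g_h(t)=\tfrac12\log\det S(t)+\tfrac n2\log n$ twice along $x(t)=x^*+th$, with $\lambda^*\in\Lambda(x^*)$ held fixed, using the parameterization \eqref{eq: parameterization_in_t}. Then I will evaluate at $t=0$ using the structural facts of Lemma~\ref{lemma: facts_about_g}: $c(x^*)=\mathbf0$, $A(x^*)=S(x^*)$ (since $S=A-cc^\top$), and $\tilde a_i^\top S^{-1}\tilde a_i=n$ whenever $\lambda_i^*>0$.

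\textbf{First derivative.} For the first-order claim, note that $g_h'(0)=\tfrac12\tr(S^{-1}\dot S)=D_xF(x^*,\lambda^*)[h]$. The computation in the proof of Proposition~\ref{proposition: v_gradient} gives exactly $(n+1)c(x^*)^\top h$, and this vanishes because $c(x^*)=\mathbf0$. Hence $\nabla g(x^*)=\mathbf0$.

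\textbf{Second derivative: formula and ingredients.} I use
\[
g_h''(0)=\tfrac12\tr(S^{-1}\ddot S)-\tfrac12\tr(S^{-1}\dot S S^{-1}\dot S).
\]
Differentiating $\dot{\tilde a}_i(t)=(\tilde a_i(t)^\top h)\tilde a_i(t)$ once more (Lemma~\ref{lemma: differentiation} holds at every $t$) gives $\ddot{\tilde a}_i=2(\tilde a_i^\top h)^2\tilde a_i$. From this:
\begin{itemize}
\item $\ddot A=\sum_i\lambda_i^*\bigl(\ddot{\tilde a}_i\tilde a_i^\top+2\dot{\tilde a}_i\dot{\tilde a}_i^\top+\tilde a_i\ddot{\tilde a}_i^\top\bigr)=6\sum_i\lambda_i^*(\tilde a_i^\top h)^2\tilde a_i\tilde a_i^\top$;
\item $\ddot S=\ddot A-\ddot c c^\top-2\dot c\dot c^\top-c\ddot c^\top$, which at $c=0$ with $\dot c=Ah=Sh$ becomes $\ddot S=\ddot A-2Shh^\top S$;
\item $\dot S=2\sum_i\lambda_i^*(\tilde a_i^\top h)\tilde a_i\tilde a_i^\top=2S^{1/2}T_hS^{1/2}$, using $c=0$ and the definition $w_i=S^{-1/2}\tilde a_i$.
\end{itemize}

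\textbf{Second derivative: the two traces.} For the first trace, the tightness identity $\tilde a_i^\top S^{-1}\tilde a_i=n$ on the support of $\lambda^*$ gives
\[
\tr(S^{-1}\ddot A)=6n\sum_i\lambda_i^*(\tilde a_i^\top h)^2=6n\,h^\top Ah=6n\|p_h\|^2 .
\]
Also $\tr(S^{-1}Shh^\top S)=h^\top Sh=\|p_h\|^2$. Together these give $\tfrac12\tr(S^{-1}\ddot S)=(3n-1)\|p_h\|^2$. For the second trace,
\[
\tr(S^{-1}\dot SS^{-1}\dot S)=4\tr(T_h^2)=4\|T_h\|_F^2,
\]
since $T_h$ is symmetric. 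This yields $g_h''(0)=(3n-1)\|p_h\|^2-2\|T_h\|_F^2$.

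\textbf{Main obstacle.} The only delicate step is the second-order bookkeeping. One must keep $\lambda^*$ frozen, so there is no envelope term, and remember that $\ddot S$ still contains $-2\dot c\dot c^\top$ even though $c=0$. The factor $6$ in $\ddot A$ must also be tracked correctly; it is the combination $2+2\cdot2+2$ from the product rule.
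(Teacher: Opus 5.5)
Your proposal is correct and follows the paper's proof essentially step for step. It reuses the Proposition~\ref{proposition: v_gradient} calculation for $g_h'(0)$, derives the same $\ddot{\tilde a}_i$, $\ddot A$, and $\ddot S$ (including the $-2\dot c\dot c^\top$ term with $\dot c=Sh$), and evaluates the two traces the same way using $c(x^*)=\mathbf 0$, $A(x^*)=S(x^*)$, and $\tilde a_i^\top S^{-1}\tilde a_i=n$ on the support of $\lambda^*$. The only slip is a cosmetic one in your closing remark: each of the three product-rule terms contributes $2(\tilde a_i^\top h)^2\tilde a_i\tilde a_i^\top$, so the factor is $2+2+2=6$, not $2+2\cdot 2+2$, although your displayed formula for $\ddot A$ is already correct.
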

\begin{proof}
    Lemma \ref{lemma: differentiation} gives the first derivatives at $t=0$.
    Applying the calculation from Proposition \ref{proposition: v_gradient} to
    \begin{align}\label{eq: g_h_in_t}
        g_h(t) = \frac{1}{2}\log \det S(t) + \frac{n}{2} \log n,
    \end{align}
    its derivative at zero is
    \begin{align*}
        g'_h(0) = & \frac{1}{2}\tr(S(x^*)^{-1}\dot{S})
        = \sum_{i=1}^m \lambda_i^* (\tilde{a}_i(x^*)^\top h)\tilde{a}_i(x^*) ^\top S(x^*)^{-1}\tilde{a}_i(x^*) \\
        = & n\sum_{i=1}^m \lambda_i^* (\tilde{a}_i(x^*)^\top h) =  n \left(\sum_{i=1}^m \lambda_i^* \tilde{a}_i(x^*)\right)^\top h = n c(x^*)^\top h =0,
    \end{align*}
    where Lemma \ref{lemma: facts_about_g} gives the last line.

    For the second derivative, denote those of $(\tilde{a}_i,A,S)$ at $t=0$ by
    $(\ddot{\tilde{a}}_i,\ddot A,\ddot S)$.
    \begin{itemize}
        \item \textbf{Differentiating $\dot{\tilde{a}}_i$.} By \eqref{eq: differentiating_tilde_a}, we have $\tilde{a}'_i(t) = (a_i^\top h)(1 - a_i^\top x^*-t a_i^\top h)^{-2} a_i$, which implies
        \begin{align}\label{eq: second_order_tilde_a}
        \ddot{\tilde{a}}_i= \tilde{a}''_i(0) =  & \frac{2(a_i^\top h)^2}{(1-a_i^\top x^* - ta_i^\top h)^3} a_i \Bigg|_{t=0} \notag \\
        = & 2\left(\frac{a_i^\top h}{1-a_i^\top x^*}\right)^2 \frac{a_i}{1-a_i^\top x^*} = 2(\tilde{a}_i^\top h)^2 \tilde{a}_i.
        \end{align}
        \item \textbf{Differentiating $\dot{A}$.} Differentiating
        $A(t)=\sum_{i=1}^m\lambda_i^*\tilde a_i(t)\tilde a_i(t)^\top$ twice gives
        \begin{align*}
            A''(t) = & \sum_{i=1}^m \lambda_i^*\left(\frac{d}{dt} \tilde{a}'_i(t)\tilde{a}_i(t)^\top + \frac{d}{dt}\tilde{a}_i(t) \tilde{a}'_i(t)^\top \right) \\
            = &  \sum_{i=1}^m \lambda_i^*\left(\tilde{a}''_i(t) \tilde{a}_i(t)^\top + 2\tilde{a}'_i(t)\tilde{a}'_i(t)^\top + \tilde{a}_i(t)\tilde{a}''_i(t)^\top  \right).
        \end{align*}
        At $t=0$, Lemma \ref{lemma: differentiation} and
        \eqref{eq: second_order_tilde_a} give
        \begin{align}\label{eq: second_order_A}
            \ddot{A} = A''(0) =\sum_{i=1}^m \lambda_i^* \left(\ddot{\tilde{a}}_i \tilde{a}_i^\top + 2 \dot{\tilde{a}}_i\dot{\tilde{a}}_i^\top + \tilde{a}_i \ddot{\tilde{a}}_i^\top \right) =  6 \sum_{i=1}^m \lambda_i^* (\tilde{a}_i^\top h)^2 \tilde{a}_i \tilde{a}_i^\top.
        \end{align}
        \item \textbf{Differentiating $\dot{S}$.} Differentiating
        $S=A-cc^\top$ twice and using Lemma \ref{lemma: differentiation},
        \eqref{eq: second_order_A}, and Lemma \ref{lemma: facts_about_g} gives
        \begin{align}
            \ddot{S} = \ddot{A} - (\ddot{c}c^\top + 2\dot{c}\dot{c}^\top + c\ddot{c}^\top) =  6 \sum_{i=1}^m \lambda_i^* (\tilde{a}_i^\top h)^2 \tilde{a}_i \tilde{a}_i^\top - 2(Sh)(Sh)^\top.\label{eq: second_order_S}
        \end{align}
    \end{itemize}
    By \eqref{eq: g_h_in_t} and the fact that $g'_h(t) = \frac{1}{2}\tr(S(t)^{-1}S'(t))$, we have
    \begin{align}
     g_h''(t) = & \frac{1}{2}\frac{d}{dt}\tr(S(t)^{-1}S'(t)) = \frac{1}{2} \tr\left(\frac{d}{dt} S(t)^{-1}S'(t)\right) \notag \\
        = & \frac{1}{2} [\tr((S^{-1})'(t)S'(t)) +  \tr(S^{-1}(t)S''(t))], \label{eq: g_second_derivative}
    \end{align}
    by linearity of trace and the product rule. Differentiating $S^{-1}S=I$ gives
    \begin{alignat*}{2}
        & S(t)^{-1} S(t) = I \quad &&   \\
        \Rightarrow\quad  & (S^{-1})'S + S^{-1}S' = 0 \quad && \text{(differentiate both sides)}\\
        \Rightarrow \quad & (S^{-1})'S S^{-1}+ S^{-1}S'S^{-1}  = 0 \quad && (\text{right multiply by $S^{-1}$}) \\
        \Rightarrow \quad & (S^{-1})' = -S^{-1}S'S^{-1}.
    \end{alignat*}
    Substitution in \eqref{eq: g_second_derivative} gives
    \begin{align}
        g''_h(t)
        = \frac{1}{2}\tr(S^{-1}(t)S''(t)) - \frac{1}{2}\tr(S^{-1}(t)S'(t)S^{-1}(t)S'(t)),\notag
    \end{align}
    and hence, at $t=0$,
    \begin{align}\label{eq: g_second_derivative_2}
        g''_h(0) = \frac{1}2\tr(S(x^*)^{-1} \ddot{S}) - \frac{1}{2}\tr(S(x^*)^{-1}\dot{S}S(x^*)^{-1}\dot{S}).
    \end{align}
    We evaluate the two terms in \eqref{eq: g_second_derivative_2}, writing
    $S=S(x^*)$ and using $S^{-1}=S^{-1/2}S^{-1/2}$ and cyclicity of trace.
    \begin{itemize}
        \item By \eqref{eq: second_order_S} and part (4) of Lemma \ref{lemma: facts_about_g}, we have
        \begin{align}
            S^{-1/2}\ddot{S} S^{-1/2} =& 6 \sum_{i=1}^m \lambda^*_i (\tilde{a}_i^\top h)^2 S^{-1/2} \tilde{a}_i\tilde{a}_i^\top S^{-1/2} - 2S^{-1/2}(Sh)(Sh)^\top S^{-1/2} \notag \\
            = &  6 \sum_{i=1}^m \lambda^*_i (\tilde{a}_i^\top h)^2 w_iw_i^\top - 2p_hp_h^\top  \notag, \\
            \tr(S^{-1}\ddot{S}) = & \tr(S^{-1/2}S^{-1/2}\ddot{S} ) = \tr(S^{-1/2}\ddot{S} S^{-1/2}) \notag \\
            = & 6\sum_{i=1}^m \lambda^*_i(\tilde{a}_i^\top h)^2\|w_i\|^2 - 2\|p_h\|^2  \notag \\
            = & 6n\sum_{i=1}^m \lambda^*_i(\tilde{a}_i^\top h)^2 - 2\|p_h\|^2. \notag
        \end{align}
        Moreover, it holds that 
        \begin{align}
            &\sum_{i=1}^m \lambda^*_i(\tilde{a}_i^\top h)^2  = \sum_{i=1}^m \lambda^*_i(w_i^\top p_h)^2 = p_h^\top \left(\sum_{i=1}^m \lambda^*_i w_iw_i^\top\right) p_h,\notag \\
            &\sum_{i=1}^m \lambda^*_i w_iw_i^\top = S^{-1/2}\left(\sum_{i=1}^m \lambda_i^*\tilde{a}_i\tilde{a}_i^\top \right)S^{-1/2} = S^{-1/2}SS^{-1/2} = I_n. \notag 
        \end{align}
        As a result, we have 
        \begin{align}\label{eq: trace_S_inv_ddot_S}
            \tr(S^{-1}\ddot{S}) = 2(3n-1)\|p_h\|^2.
        \end{align}

        \item By Lemma \ref{lemma: differentiation} and the fact that
        $c=c(x^*)=\mathbf{0}$ from Lemma \ref{lemma: facts_about_g}, we have
        \begin{align*}
            S^{-1/2} \dot{S} S^{-1/2} = 2\sum_{i=1}^m \lambda_i^* (\tilde{a}_i^\top h)w_iw_i^\top = 2T_h,
        \end{align*}
        and hence 
        \begin{align}\label{eq: trace_S_inv_dot_S_sq}
            \tr(S^{-1}\dot{S}S^{-1}\dot{S})  = &   \tr(S^{-1/2}S^{-1/2}\dot{S}S^{-1/2}S^{-1/2}\dot{S}) \notag \\
            = & \tr((S^{-1/2} \dot{S} S^{-1/2} )(S^{-1/2} \dot{S} S^{-1/2})) \notag  \\
            = & \tr((2T_h)^2) = 4\tr(T_h^2) = 4\|T_h\|_F^2.
        \end{align}
    \end{itemize}
    Substituting \eqref{eq: trace_S_inv_ddot_S} and
    \eqref{eq: trace_S_inv_dot_S_sq} into \eqref{eq: g_second_derivative_2} proves the claim.
\end{proof}

The next lemma bounds $\|T_h\|_F$ by $\|p_h\|$.
\begin{lemma}\label{lemma: control_Th_by_ph}
    For every direction $h\in \R^n$, we have  $\|T_h\|^2_F \leq n\|p_h\|^2$.
\end{lemma}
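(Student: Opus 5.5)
Write $S=S(x^*)$ and, as in Lemma~\ref{lemma: derivative_of_g}, $w_i=S^{-1/2}\tilde a_i(x^*)$ and $p_h=S^{1/2}h$. The plan is to rewrite $T_h$ entirely in the normalized coordinates $w_i$ and then bound $\|T_h\|_F^2$ by a convexity estimate. Since $\tilde a_i(x^*)^\top h=w_i^\top S^{1/2}h=w_i^\top p_h$, we have
\[
    T_h=\sum_{i=1}^m \lambda_i^*(w_i^\top p_h)\,w_iw_i^\top .
\]
Two facts established in the proof of Lemma~\ref{lemma: derivative_of_g} will be used:
\begin{enumerate}
    \item $\sum_i\lambda_i^* w_iw_i^\top=I_n$, which uses $c(x^*)=\mathbf 0$, so that $A(x^*)=S$;
    \item $\|w_i\|^2=n$ whenever $\lambda_i^*>0$, by part (4) of Lemma~\ref{lemma: facts_about_g}.
\end{enumerate}

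The main step is a Jensen/Cauchy--Schwarz bound on the Frobenius norm. The map $M\mapsto\|M\|_F^2$ is convex and $\lambda^*\in\Delta_m$, so Jensen's inequality gives
\[
    \|T_h\|_F^2\le\sum_i\lambda_i^*(w_i^\top p_h)^2\,\|w_iw_i^\top\|_F^2 .
\]
For every $i$ with $\lambda_i^*>0$, fact 2 gives $\|w_iw_i^\top\|_F^2=\|w_i\|^4=n^2$. Fact 1 gives $\sum_i\lambda_i^*(w_i^\top p_h)^2=p_h^\top\bigl(\sum_i\lambda_i^*w_iw_i^\top\bigr)p_h=\|p_h\|^2$. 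Combining these yields only $\|T_h\|_F^2\le n^2\|p_h\|^2$. This is weaker than the claim by a factor of $n$, so this crude Jensen step alone is not enough.

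To recover the missing factor of $n$, I would expand the Frobenius norm directly and use the identity $\sum_i\lambda_i^*w_iw_i^\top=I_n$ instead of Jensen:
\[
    \|T_h\|_F^2=\tr(T_h^2)=\sum_{i,j}\lambda_i^*\lambda_j^*(w_i^\top p)(w_j^\top p)(w_i^\top w_j)^2 ,
\]
with $p=p_h$. A better route is to view $T_h$ as a self-adjoint operator and bound it through the quadratic forms $u^\top T_h u=\sum_i\lambda_i^*(w_i^\top p)(w_i^\top u)^2$. The cleanest way to do this is Cauchy--Schwarz on the weighted sum. Write $\|T_h\|_F=\sup_{\|M\|_F=1}\langle T_h,M\rangle$. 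For $M$ symmetric, set $\mu_i:=w_i^\top Mw_i$, so that $\langle T_h,M\rangle=\sum_i\lambda_i^*(w_i^\top p)\,\mu_i$. Cauchy--Schwarz in the $\lambda^*$-weighted inner product gives
\[
    \langle T_h,M\rangle\le\Bigl(\sum_i\lambda_i^*(w_i^\top p)^2\Bigr)^{1/2}\Bigl(\sum_i\lambda_i^*\mu_i^2\Bigr)^{1/2}=\|p\|\Bigl(\sum_i\lambda_i^*(w_i^\top Mw_i)^2\Bigr)^{1/2}.
\]
It therefore remains to show $\sum_i\lambda_i^*(w_i^\top Mw_i)^2\le n\|M\|_F^2$.

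This last inequality is the main obstacle. My plan is to write $w_i^\top Mw_i=\langle M^{1/2}\cdot\rangle$-type expressions. When $M\succeq0$ the bound follows from $w_i^\top Mw_i\le\|w_i\|\,\|Mw_i\|$, so that
\[
    (w_i^\top Mw_i)^2\le\|w_i\|^2\,\|Mw_i\|^2=n\,w_i^\top M^2w_i ,
\]
and summing with the weights $\lambda_i^*$, using fact 1, gives $n\tr(M^2)=n\|M\|_F^2$. In fact this Cauchy--Schwarz bound works for any symmetric $M$ with no definiteness needed, since $|w_i^\top Mw_i|\le\|w_i\|\,\|Mw_i\|$ holds in general. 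Squaring, weighting by $\lambda_i^*$, and summing via $\sum_i\lambda_i^*w_iw_i^\top=I_n$ gives $\sum_i\lambda_i^*(w_i^\top Mw_i)^2\le n\|M\|_F^2$. Restricting the supremum to symmetric $M$ is harmless because $T_h$ is symmetric. Taking the supremum over $M$ then yields $\|T_h\|_F^2\le n\|p_h\|^2$.
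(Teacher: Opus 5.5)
Your final argument is correct and is essentially the paper's proof: you dualize $\|T_h\|_F$ over symmetric $M$ with $\|M\|_F=1$, apply $\lambda^*$-weighted Cauchy--Schwarz to split off $\sum_i\lambda_i^*(\tilde a_i^\top h)^2=\|p_h\|^2$, and bound $(w_i^\top M w_i)^2\le\|w_i\|^2\|Mw_i\|^2=n\,w_i^\top M^2w_i$ before summing with $\sum_i\lambda_i^*w_iw_i^\top=I_n$. The Jensen attempt, the $\tr(T_h^2)$ expansion, and the $M\succeq0$ special case are unnecessary detours and can be dropped from a written proof.
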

\begin{proof}
    For any symmetric $A$ with $\|A\|_F=1$, Cauchy--Schwarz gives
    \begin{align*}
        |\langle T_h, A\rangle|^2 =  \Bigg|\sum_{i=1}^m \lambda_i^*(\tilde{a}_i^{\top} h)(w_i^\top A w_i)\Bigg|^2\leq  \left(\sum_{i=1}^m \lambda_i^* (\tilde{a}_i^{\top} h)^2\right)\left(\sum_{i=1}^m \lambda_i^* (w_i^\top A w_i)^2\right).
    \end{align*}
    The first factor is
    \begin{align*}
        \sum_{i=1}^m \lambda_i^* (\tilde{a}_i^{\top} h)^2 = h^\top \left(\sum_{i=1}^m \lambda_i^*\tilde{a}_i(x^*)\tilde{a}_i(x^*)^\top\right)h = h^\top S(x^*)h = p_h^\top p_h =\|p_h\|^2.
    \end{align*}
    For the second factor, Cauchy--Schwarz and part 4 of Lemma \ref{lemma: facts_about_g} give, for $i\in[m]$ with $\lambda_i^*>0$,
    \begin{align*}
        (w_i^\top A w_i)^2 \leq \|w_i\|^2\|Aw_i\|^2
        = (\tilde{a}_i(x^*)^\top S(x^*)^{-1}\tilde{a}_i(x^*)) w_i^\top A^2 w_i = nw_i^\top A^2 w_i,
    \end{align*}
    and therefore
    \begin{align*}
        \left(\sum_{i=1}^m \lambda_i^* (w_i^\top A w_i)^2\right)\leq  n \sum_{i=1}^m \lambda_i^* w_i^\top A^2 w_i= n \tr\left(\left(\sum_{i=1}^m\lambda_i^* w_iw_i^\top\right)A^2 \right) = n,
    \end{align*}
    because $\sum_i\lambda_i^*w_iw_i^\top=I_n$ and $\tr(A^2)=\|A\|_F^2=1$.
    Thus it holds that 
    \begin{align*}
        \|T_h\|_F^2 = \max_{A\in\S^n,\|A\|_F=1} |\langle T_h, A\rangle|^2 \leq n\|p_h\|^2.
    \end{align*}
    This completes the proof. 
\end{proof}
We can now prove local quadratic growth at the MaxIE center.
\begin{proposition}\label{prop: local_quadratic_growth}
    The MaxIE center $x^*$ admits an open ball $U_{\mathrm{QG}}$ and a constant $\mu >0$ such that $v(x) -v(x^*) \geq\mu\|x-x^*\|^2$ on $U_{\mathrm{QG}}$.
\end{proposition}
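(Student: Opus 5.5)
The plan is to prove quadratic growth for the minorant $g$ of Lemma~\ref{lemma: facts_about_g} and then transfer it to $v$. By part (1) of that lemma, $g\le v$ on $\mathrm{int}(P)$ and $g(x^*)=v(x^*)$. It therefore suffices to find a ball $U_{\mathrm{QG}}$ around $x^*$ and $\mu>0$ such that $g(x)-g(x^*)\ge\mu\|x-x^*\|^2$ on $U_{\mathrm{QG}}$. We would then have $v(x)-v(x^*)\ge g(x)-g(x^*)\ge\mu\|x-x^*\|^2$. The function $g(x)=F(x,\lambda^*)+\frac n2\log n$ is smooth (indeed real-analytic) on $\mathrm{int}(P)$, because $\lambda^*\in\Delta_m^{\mathrm{fs}}$ keeps $S(x,\lambda^*)\succ0$. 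So the local claim reduces to a second-order Taylor argument at $x^*$.

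First, Lemma~\ref{lemma: derivative_of_g} gives $\nabla g(x^*)=\mathbf0$. It also gives the Hessian quadratic form $h^\top\nabla^2 g(x^*)h=g_h''(0)=(3n-1)\|p_h\|^2-2\|T_h\|_F^2$. Lemma~\ref{lemma: control_Th_by_ph} bounds $2\|T_h\|_F^2\le 2n\|p_h\|^2$, so
\[
h^\top\nabla^2 g(x^*)h\ge (n-1)\|p_h\|^2=(n-1)\,h^\top S(x^*)h\ge (n-1)\lambda_{\min}(S(x^*))\|h\|^2 .
\]
Here $S(x^*)\succ0$. For $n\ge2$ this gives positive definiteness with modulus $2\mu_0:=(n-1)\lambda_{\min}(S(x^*))$.

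Second, since $\nabla^2 g$ is continuous, there is a ball $U_{\mathrm{QG}}=B_2(x^*,r)\subseteq\mathrm{int}(P)$ on which $\nabla^2 g\succeq\mu_0 I_n$. Taylor's theorem with integral remainder along the segment from $x^*$ to $x$, together with $\nabla g(x^*)=\mathbf0$, then yields $g(x)-g(x^*)\ge\frac{\mu_0}{2}\|x-x^*\|^2$. We take $\mu=\mu_0/2$.

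The main obstacle is the degenerate case $n=1$, where the bound $(n-1)\|p_h\|^2$ vanishes. There I would argue directly. In one dimension, $P$ is an interval, $m=2$ after irredundancy, and $\lambda^*=(1/2,1/2)$. Here $g$ is the explicit function $\frac12\log\bigl(\lambda_1\lambda_2\det(H)^2(1-a_1x)^{-2}(1-a_2x)^{-2}\bigr)$, which is a sum of $-\log(1-a_ix)$ terms with $a_1,a_2$ of opposite sign. Its second derivative $\sum_i a_i^2/(1-a_ix)^2$ is strictly positive. Alternatively, the sharper inequality $\|T_h\|_F^2\le\|p_h\|^2$ holds when $n=1$, which gives modulus $\|p_h\|^2$. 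Either way the Hessian is positive definite and the same Taylor argument applies.
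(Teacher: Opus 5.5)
Your proposal is correct and follows the paper's proof. For $n\ge2$ you bound $h^\top\nabla^2 g(x^*)h$ below by $(n-1)h^\top S(x^*)h$ via Lemmas~\ref{lemma: derivative_of_g} and~\ref{lemma: control_Th_by_ph}, extend this to a ball by continuity, apply Taylor with $\nabla g(x^*)=\mathbf0$, and transfer to $v$ through $g\le v$, $g(x^*)=v(x^*)$. For $n=1$ your explicit computation of $g$ matches the paper's explicit formula for $v$ on an interval; in fact $g\equiv v$ there.

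One correction concerns your ``alternative'' for $n=1$: it fails as written. The bound $\|T_h\|_F^2\le\|p_h\|^2$ is not sharper; it is exactly Lemma~\ref{lemma: control_Th_by_ph} at $n=1$, and it only gives $g_h''(0)\ge 2\|p_h\|^2-2\|p_h\|^2=0$. What is actually true is $T_h=0$, because $w_i^2=1$ on the support of $\lambda^*$ and $\sum_i\lambda_i^*\tilde a_i(x^*)=c(x^*)=\mathbf0$. This yields $g_h''(0)=2\|p_h\|^2$.
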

\begin{proof}
    We first consider $n \geq 2$. For every $h\ne\mathbf0$, Lemmas
    \ref{lemma: derivative_of_g} and \ref{lemma: control_Th_by_ph} give
    \begin{align}
     h^\top \nabla^2 g(x^*) h  = g''_h(0) \notag \geq & (3n-1)\|p_h\|^2 - 2n\|p_h\|^2 \notag \\
        = & (n-1)\|p_h\|^2 = \frac{n-1}{n}h^\top Q(x^*)^{-1}h > 0, \notag
    \end{align}
    where $\|p_h\|^2=h^\top S(x^*)h$ and part 3 of Lemma \ref{lemma: facts_about_g}
    give the equality. Thus $\nabla^2g(x^*)\succ0$. By continuity, choose
    $r>0$ and $\mu>0$ so that
    $\bar B_2(x^*,r)\subset\mathrm{int}(P)$ and
    $\nabla^2g(x)\succeq2\mu I_n$ throughout that closed ball, and set
    $U_{\mathrm{QG}}:=B_2(x^*,r)$. The segment from $x^*$ to every
    $x\in U_{\mathrm{QG}}$ then remains in the Hessian-bound neighborhood.
    Lemma \ref{lemma: facts_about_g}, Lemma \ref{lemma: derivative_of_g},
    and Taylor's theorem with integral remainder now give
    \begin{align*}
        v(x)-v(x^*)
        &\geq g(x)-g(x^*) \\
        &=\nabla g(x^*)^\top(x-x^*) \\
        &\quad+\intop\nolimits_0^1(1-\theta)(x-x^*)^\top
        \nabla^2g(x^*+\theta(x-x^*))(x-x^*)\,d\theta \\
        &\geq2\mu\|x-x^*\|^2\intop\nolimits_0^1(1-\theta)\,d\theta
        =\mu\|x-x^*\|^2.
    \end{align*}

    For $n=1$, let $P=[l,u]$ with $u>l$. Then $x^*=(l+u)/2$ and, for $x\in(l,u)$,
    \begin{align*}
        (P-x)^\circ = \Big[-\frac{1}{x-l}, \frac{1}{u-x}\Big].
    \end{align*}
    This interval is its own MinCE, and normalized one-dimensional volume is
    Euclidean length divided by the unit-ball length $2$; hence
    \begin{align*}
        v(x) = \log\left(\frac{1}{2}\left( \frac{1}{u-x} + \frac{1}{x-l}\right)\right) = -\log((u-x)(x-l)) + \log\left(\frac{u-l}{2}\right).
    \end{align*}
    Differentiation gives
    \begin{alignat*}{2}
        v'(x) = & \frac{1}{u-x} - \frac{1}{x-l} \quad &&  \Rightarrow \quad v'(x^*) =  0, \\
        v''(x) =&    \frac{1}{(u-x)^2} + \frac{1}{(x-l)^2} \quad &&               \Rightarrow \quad  v''(x^*) = \frac{8}{(u-l)^2} > 0.
    \end{alignat*}
    Taylor's theorem proves the claim as above.
\end{proof}

\subsection[A Polyak–Łojasiewicz (PL)-type inequality of v around the MaxIE center]{A Polyak–Łojasiewicz (PL)-type inequality of $v$ around the MaxIE center}
Recall that we denote $\tau(x) = c(x)^\top Q(x) c(x)$, where $c(x)$ and $Q(x)$ define
$\mathrm{MinCE}((P-x)^\circ)$. Lemma~\ref{lemma: facts_about_g} gives
$c(x^*)=\mathbf{0}$, and Proposition \ref{proposition: v_gradient} gives
$\nabla v(x)=(n+1)c(x)$. Thus $\tau(x)$ is the squared gradient norm in the
local metric $Q(x)/(n+1)^2$. We first establish descent of $v$ and then locally bound
$v(x)-v(x^*)$ by $\tau(x)$.
\begin{lemma}\label{lemma: descent_of_v}
Recall the potential $v(x)$ defined in \eqref{eq: potential function}. We have
\begin{align*}
    v(\K(x)) - v(x) \leq -\frac{n+1}{2}\log\left(\frac{1}{1-\tau(x)}\right).
\end{align*}
For every nonfixed point $x$, this inequality is strict, i.e., 
$v(\K(x))<v(x)$.
\end{lemma}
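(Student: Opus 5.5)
The plan is a volume sandwich built from the polarity identity of Lemma~\ref{lemma: useful_lemma} together with MinCE optimality at the new center. Fix $x\in\mathrm{int}(P)$ and write $L:=\mathrm{MinCE}((P-x)^\circ)=\E(c(x),Q(x))$. Since $(P-x)^\circ$ contains the origin in its interior and $L$ contains it, the origin lies in $\mathrm{int}(L)$, so $\tau(x)<1$ and Lemma~\ref{lemma: useful_lemma} applies. By Algorithm~\ref{alg: polarity process}, $\E':=L^\circ$ is an ellipsoid with center $x'$, and the new inscribed ellipsoid is $\E_{+}:=x+\E'$. Its center is $\K(x)=x+x'$. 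Lemma~\ref{lemma: useful_lemma} gives
\[
\log\V(\E_+)=\log\V(L^\circ)=\frac{n+1}{2}\log\Bigl(\frac{1}{1-\tau(x)}\Bigr)-v(x).
\]

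Next I bound $v(\K(x))$ from above using $\E_+$. The inclusion $L\supseteq (P-x)^\circ$, polarity reversal and the bipolar theorem give $\E_+\subseteq P$. Hence $\E_+-\K(x)\subseteq P-\K(x)$, and this ellipsoid is centered at the origin, which lies in the interior of $P-\K(x)$. Polarizing gives $(\E_+-\K(x))^\circ\supseteq (P-\K(x))^\circ$, so $(\E_+-\K(x))^\circ$ is feasible for the MinCE problem at $\K(x)$. Because $\E_+-\K(x)$ is centered at the origin, the equality case of Lemma~\ref{lemma: useful_lemma} gives $\V((\E_+-\K(x))^\circ)=1/\V(\E_+)$. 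Therefore
\[
v(\K(x))\le -\log\V(\E_+)=v(x)-\frac{n+1}{2}\log\Bigl(\frac{1}{1-\tau(x)}\Bigr),
\]
which is the claimed inequality.

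For strictness at a nonfixed point, it suffices to show $\tau(x)>0$, i.e.\ $c(x)\neq\mathbf 0$, since $Q(x)\succ0$. I argue by contraposition: suppose $c(x)=\mathbf0$. Then $L$ is centered at the origin, so $L^\circ$ is also centered at the origin by Lemma~\ref{lemma: useful_lemma}. Hence $x'=\mathbf0$ and $\K(x)=x$, so $x$ is a fixed point. Consequently, at any nonfixed point $\tau(x)\in(0,1)$, the right-hand side is strictly negative, and $v(\K(x))<v(x)$.

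The step needing the most care is the feasibility and centering argument in the second paragraph, specifically checking that $\K(x)\in\mathrm{int}(P)$ so that $v(\K(x))$ is defined. This holds because $\K(x)$ is the center of the full-dimensional ellipsoid $\E_+\subseteq P$, so a small ball around it lies in $\E_+\subseteq P$. The remaining facts, namely the bipolar inclusion and the formula for the polar of a centered ellipsoid, are routine.
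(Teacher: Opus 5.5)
Your proof is correct and follows essentially the same route as the paper: Lemma~\ref{lemma: useful_lemma} gives $\log\V(\E_+)$, MinCE optimality at $\K(x)$ is applied to the centered polar of $\E_+-\K(x)$, and strictness follows from $c(x)=\mathbf 0\Rightarrow \K(x)=x$. Your explicit check that $\K(x)\in\mathrm{int}(P)$, so that $v(\K(x))$ is defined, is a small detail the paper leaves implicit.
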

\begin{proof}
    Suppress the iteration index in Algorithm \ref{alg: polarity process} and write
    \begin{align*}
        \E^\circ = \E^\circ_k, \quad
        \E^{\circ \circ} = \E'_{k+1}, \quad
        \E^+ = \E_{k+1}.
    \end{align*}
    Translation invariance of volume and Lemma \ref{lemma: useful_lemma} give
    \begin{align}\label{eq: polar_bipolar_viol}
        \log \V(\E^\circ) + \log \V(\E^{+}) = \frac{n+1}{2} \log \left( \frac{1}{1-\tau(x)}\right).
    \end{align}
    Since $\E^+\subseteq P$ is centered at $\K(x)$, the polar of
    $\E^+-\K(x)$ covers $(P-\K(x))^\circ$. Thus
    \begin{align}\label{eq: minCE_optimality}
        v(\K(x)) \leq  \log \V((\E^+-\K(x))^\circ) = -\log \V(\E^+),
    \end{align}
    by \eqref{eq: potential function} and Lemma \ref{lemma: useful_lemma}.
    Combining \eqref{eq: polar_bipolar_viol} and \eqref{eq: minCE_optimality}
    proves the bound.

    Since $Q(x)\succ0$, $\tau(x)=0$ iff $c(x)=\mathbf{0}$, equivalently
    $\K(x)=x$ by Lemma~\ref{lemma: useful_lemma}; hence the bound is strict at
    every nonfixed point.
\end{proof}

\begin{lemma} \label{lemma: perturbed_ellip}
    Let $x^*$ be the center of $\MaxIE(P)$. Define
    \begin{align*}
        \Delta(x) := & x-x^*, \quad \bar{c}(x) := Q(x^*)^{-1}\Delta(x), \\
        \Phi_i(x) := & (\tilde{a}_i(x) - \bar{c}(x))^\top Q(x^*)(\tilde{a}_i(x) - \bar{c}(x)), ~\forall i\in [m].
    \end{align*}
    There exist an open neighborhood $U_{\mathrm{shift}}$ of $x^*$ and $\beta > 0$ such that
    \begin{align}
        \Phi_i(x) \leq 1 + \beta \|x-x^*\|^2, ~\forall i\in [m], \forall x\in U_{\mathrm{shift}}. \notag
    \end{align}
    Moreover, the ellipsoid
    \begin{align}
        \bar{\E}(x) := \left\{y\in \R^n:~ (y-\bar{c}(x))^\top \frac{Q(x^*)}{1 +\beta \|x - x^*\|^2} (y-\bar{c}(x))\leq 1 \right\} \notag
    \end{align}
    contains $(P-x)^\circ$ for every $x\in U_{\mathrm{shift}}$.
\end{lemma}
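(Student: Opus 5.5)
The plan is a second-order Taylor expansion of each $\Phi_i$ at $x^*$, separated by whether $\lambda_i^*>0$. Set $Q^*:=Q(x^*)$, $\tilde a_i^*:=\tilde a_i(x^*)$, and fix $\lambda^*\in\Lambda(x^*)$. By Lemma~\ref{lemma: facts_about_g}, $c(x^*)=\mathbf0$ and $\bar c(x^*)=\mathbf0$, so $\Phi_i(x^*)=(\tilde a_i^*)^\top Q^*\tilde a_i^*\le 1$ by MinCE feasibility. Equality holds whenever $\lambda_i^*>0$, by part~4 of Lemma~\ref{lemma: facts_about_g} together with $Q^*=\frac1n S(x^*)^{-1}$.

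Each $\Phi_i$ is smooth (in fact real-analytic) on $\mathrm{int}(P)$, being a quadratic form in $\tilde a_i(x)$ and the linear map $\bar c(x)$. Its gradient at $x^*$ follows from Lemma~\ref{lemma: differentiation}, which gives $\dot{\tilde a}_i=(\tilde a_i^\top h)\tilde a_i$, and from $\dot{\bar c}=(Q^*)^{-1}h$:
\begin{align*}
D\Phi_i(x^*)[h] = 2(\tilde a_i^*)^\top Q^*\bigl((\tilde a_i^{*\top}h)\tilde a_i^* - (Q^*)^{-1}h\bigr) = 2(\tilde a_i^{*\top}h)\bigl(\Phi_i(x^*)-1\bigr).
\end{align*}
This is the key cancellation, and it is exactly why the center shift $\bar c(x)=Q^*\,^{-1}\Delta(x)$ was chosen: for every active index ($\lambda_i^*>0$) both the value deficit and the gradient of $\Phi_i-1$ vanish at $x^*$.

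For active indices, Taylor's theorem on a closed ball $\bar B_2(x^*,r)\subset\mathrm{int}(P)$ then gives $\Phi_i(x)\le 1+\beta_i\|x-x^*\|^2$. Here $\beta_i$ is half the maximum of $\|\nabla^2\Phi_i\|$ over the ball, which is finite by continuity on a compact set.

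For inactive indices, I will not attempt to show strict feasibility. Instead I use the same gradient identity, which gives $\nabla\Phi_i(x^*)=2(\Phi_i(x^*)-1)\tilde a_i^*$, so that
\begin{align*}
\Phi_i(x)\le \Phi_i(x^*)+2(\Phi_i(x^*)-1)\tilde a_i^{*\top}\Delta + \beta_i\|\Delta\|^2 .
\end{align*}
If $\Phi_i(x^*)=1$, the linear term vanishes and the bound follows as in the active case. If $\Phi_i(x^*)<1$, the right side equals $1-(1-\Phi_i(x^*))(1+2\tilde a_i^{*\top}\Delta)+\beta_i\|\Delta\|^2$. This is at most $1+\beta_i\|\Delta\|^2$ once $\|\Delta\|\le 1/(2\|\tilde a_i^*\|)$, which I enforce by shrinking $r$. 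Taking $U_{\mathrm{shift}}:=B_2(x^*,r)$ with $r$ below all these finitely many thresholds, and $\beta:=\max_i\beta_i$ (increased if needed so that $\beta>0$), proves the first claim.

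For the containment, every $x\in U_{\mathrm{shift}}$ gives $\Phi_i(x)\le 1+\beta\|x-x^*\|^2$. Dividing by this factor shows that each $\tilde a_i(x)$ lies in $\bar\E(x)$. Since $\bar\E(x)$ is convex and $(P-x)^\circ=\conv\{\tilde a_i(x)\}$, the inclusion $(P-x)^\circ\subseteq\bar\E(x)$ follows.

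The main obstacle is the gradient computation and recognizing that it vanishes precisely for active constraints; the inactive case needs only a smallness condition on the neighborhood.
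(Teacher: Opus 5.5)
Your proposal is correct and follows essentially the same argument as the paper: the identity $D\Phi_i(x^*)[h]=2(\tilde a_i(x^*)^\top h)(\Phi_i(x^*)-1)$ makes the gradient vanish at every tight constraint, Taylor's theorem with a local Hessian bound handles those indices, and slack indices are treated separately. The differences are minor: the paper partitions directly by tightness, $\tilde a_i(x^*)^\top Q(x^*)\tilde a_i(x^*)=1$, which your split by $\lambda_i^*$ effectively reduces to anyway, and for slack constraints it uses plain continuity to get $\Phi_i(x)\le 1$ nearby instead of your second-order bound with the explicit radius $1/(2\|\tilde a_i(x^*)\|)$.
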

\begin{proof}
    Lemma~\ref{lemma: facts_about_g} gives $c(x^*)=\mathbf0$, so
    $\mathrm{MinCE}((P-x^*)^\circ)=\E(\mathbf0,Q(x^*))$. Since
    $x^*\in\mathrm{int}(P)$, there is a neighborhood $V$ on which all
    polytope inequalities $a_i^\top x<1$ remain strict and
    $\tilde a_i,\Phi_i$ are smooth. Fix $i\in[m]$
    and $h\in\R^n$, set $x(t)=x^*+th$ on a sufficiently small two-sided
    interval about zero, and use Lemma
    \ref{lemma: differentiation} to obtain
    \begin{align*}
        D\Phi_i(x^*)[h] = & 2 (D\tilde{a}_i(x^*)[h] - D\bar{c}(x^*)[h])^\top Q(x^*)(\tilde{a}_i(x^*) - \bar{c}(x^*)) \\
        = & 2 ( (\tilde{a}_i(x^*)^\top h)\tilde{a}_i(x^*) - Q(x^*)^{-1}h)^\top Q(x^*) \tilde{a}_i(x^*) \\
        = & 2(\tilde{a}_i(x^*)^\top h)(\tilde{a}_i(x^*)^\top Q(x^*)\tilde{a}_i(x^*) - 1).
    \end{align*}
    Define the active set
    \begin{align*}
        \mathcal{A} := \{i\in[m]: \tilde{a}_i(x^*)^\top Q(x^*)\tilde{a}_i(x^*) = 1\},
    \end{align*}
    which is nonempty, since otherwise the ellipsoid could be scaled down. For
    $i\in\mathcal A$, the derivative vanishes, so
    $\nabla\Phi_i(x^*)=\mathbf{0}$; smoothness gives $r_i>0$ with
    $B_2(x^*,r_i)\subseteq V$ and
    \begin{align*}
        H_i := \sup_{y \in B_2(x^*, r_i)} \|\nabla^2\Phi_i(y)\| < +\infty.
    \end{align*}
    Taylor's theorem gives, for $i\in\mathcal A$,
    \begin{align*}
        \Phi_i(x)
        =
        1 + \frac{1}{2}(x-x^*)^\top \nabla^2\Phi_i(\xi)(x-x^*)
        \leq 1 + \frac{H_i}{2}\|x-x^*\|^2,
    \end{align*}
    for some $\xi$ between $x^*$ and $x$.

    If $\mathcal A=[m]$, set $s_{\max}:=1$; otherwise define
    \begin{align*}
        s_{\max} := 1 - \max_{i\notin \mathcal{A}}\tilde{a}_i(x^*)^\top Q(x^*)\tilde{a}_i(x^*) > 0.
    \end{align*}
    For each $i\notin\mathcal A$, continuity gives $r_i>0$ with
    $B_2(x^*,r_i)\subseteq V$ and
    \begin{align*}
        \Phi_i(x) \leq 1 - \frac{s_{\max}}{2}, ~\forall x\in B_2(x^*, r_i).
    \end{align*}
    Define 
    \begin{align*}
        r_{\min} := \min_{i\in [m]} r_i > 0, \quad  U_{\mathrm{shift}} := B_2(x^*, r_{\min}), \quad \beta := \max\left\{ 1, \max_{i\in \mathcal{A}} \frac{H_i}{2}\right\}.
    \end{align*}
    Then for every $x\in U_{\mathrm{shift}}$,
    \begin{align*}
    \Phi_i(x) \leq
    \left \{
    \begin{aligned}
        &1 + \frac{H_i}{2}\|x-x^*\|^2  && \text{if } i \in \mathcal{A} \\
        &1-\frac{s_{\max}}{2} \leq 1 && \text{if } i \notin \mathcal{A}
    \end{aligned}
    \right \} \leq  1 + \beta\|x-x^*\|^2.
\end{align*}
    Finally, $\bar\E(x)\supseteq(P-x)^\circ$ because
    \begin{align*}
        (\tilde{a}_i(x)-\bar c(x))^\top
        \frac{Q(x^*)}{1+\beta\|x-x^*\|^2}
        (\tilde{a}_i(x)-\bar c(x)) 
         = \frac{\Phi_i(x)}{1+\beta\|x-x^*\|^2}\leq1,
        ~i\in[m].
    \end{align*}
    This completes the proof.
\end{proof}

\begin{lemma}\label{lemma: lifted_MinCE_continuity}
For $x\in\operatorname{int}(P)$, the lifted problem
\eqref{eq: lifted_MinCE} has a unique optimizer $\mathcal Q(x)$, and
$\mathcal Q$ is continuous. Consequently, its blocks $G,z,s$, the recovered
maps $c,Q$, and the maps $\tau$ and $\K$ are continuous. Moreover,
\begin{align}\label{eq: self_contained_K_formula}
    \K(x)=x-\frac{Q(x)c(x)}{1-\tau(x)},
\end{align}
and the returned shape
$(1-\tau(x))(Q(x)^{-1}-c(x)c(x)^\top)$ is continuous.
\end{lemma}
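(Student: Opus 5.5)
We establish uniqueness of the lifted optimizer first, then continuity through a Berge-type maximum theorem, and finally read off the formula for $\K$ from Lemma~\ref{lemma: useful_lemma}.

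\textbf{Uniqueness.} Fix $x\in\mathrm{int}(P)$. The lifted problem \eqref{eq: lifted_MinCE} minimizes the strictly convex function $-\log\det\mathcal Q$ over a convex feasible set, namely the intersection of $\S^{n+1}_{++}$ with finitely many linear inequalities in $\mathcal Q$. It is therefore enough to show that an optimizer exists. Existence follows in two steps:
\begin{itemize}
\item Since the lifted vectors $y_i(x)$ span $\R^{n+1}$ (full-dimensionality, as in the definition of $\Delta_m^{\mathrm{fs}}$), the constraints force $\tr(\mathcal Q\sum_i y_iy_i^\top)\le m$. This bounds the feasible set.
\item A small multiple of $I_{n+1}$ is feasible. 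The sublevel set $\{-\log\det\mathcal Q\le\text{const}\}$ keeps $\det\mathcal Q$ bounded below, so $\mathcal Q$ stays away from the boundary of the cone.
\end{itemize}
Hence the relevant sublevel set is compact, an optimizer exists, and it is unique by strict convexity.

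\textbf{Continuity of $\mathcal Q$.} I will invoke Berge's maximum theorem on a compact neighborhood $N\subset\mathrm{int}(P)$ of $x$. The steps are as follows.
\begin{itemize}
\item On $N$ the vectors $y_i(x')$ depend continuously on $x'$, stay bounded, and retain uniform spanning. A uniform choice $\epsilon$ with $\epsilon I$ feasible for all $x'\in N$ gives a uniform lower bound on $\det\mathcal Q$.
\item The uniform trace bound from the spanning property gives a uniform upper bound on $\mathcal Q$.
\item All optimizers over $N$ therefore lie in one fixed compact set $K\subset\S^{n+1}_{++}$.
\item The constraint map $x'\mapsto\{\mathcal Q\in K:\ y_i(x')^\top\mathcal Q y_i(x')\le1\}$ is upper hemicontinuous, since it is defined by continuous inequalities.
\item For lower hemicontinuity I use Slater's condition. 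Given a feasible $\mathcal Q$ at $x$, the scaled matrices $(1-\delta)\mathcal Q+\delta\epsilon I$ are strictly feasible, so they stay feasible for $x'$ near $x$.
\end{itemize}
Berge's theorem then makes the argmin correspondence upper hemicontinuous. Because the argmin is single-valued, $\mathcal Q$ is continuous.

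As an alternative route, I could take any sequence $x_k\to x$, extract a convergent subsequence of $\mathcal Q(x_k)$ within $K$, and show that the limit is feasible and optimal by comparing with perturbed feasible points. Uniqueness then gives convergence of the whole sequence. I expect this lower-hemicontinuity or approximation step to be the only delicate part, and Slater feasibility handles it.

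\textbf{Continuity of the derived maps.} The blocks $G,z,s$ are continuous as coordinate projections of $\mathcal Q$. $G\succ0$ as a principal block, so $c=-G^{-1}z$ is continuous. For $Q$ in \eqref{eq: MinCE_primal_sol_from_reformulation}, the denominator $1-s+z^\top G^{-1}z$ must stay positive. It is the positive scaling that recovers the height-one section, and it is positive because the MinCE $\E(c,Q)$ is a genuine ellipsoid (equivalently, it equals $1-\det\mathcal Q/\det G$ rearranged). Hence $Q$ is continuous, and so is $\tau=c^\top Qc$. Moreover $\tau<1$, because $0\in\mathrm{int}(P-x)^\circ\subseteq\mathrm{int}\,\E(c,Q)$.

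\textbf{Formula for $\K$ and continuity of the returned shape.} Lemma~\ref{lemma: useful_lemma} applied to $\E(c(x),Q(x))$ gives the polar center $-Q(x)c(x)/(1-\tau(x))$ and the shape $(1-\tau(x))(Q(x)^{-1}-c(x)c(x)^\top)$. Shifting back by $x$, as in Algorithm~\ref{alg: polarity process}, yields \eqref{eq: self_contained_K_formula}. Since $1-\tau>0$ and $c$, $Q$, $\tau$ are continuous, both $\K$ and the returned shape are continuous.
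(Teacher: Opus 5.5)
Your proof is correct and follows the paper's structure. Existence comes from the trace bound $\operatorname{tr}(W(x)\mathcal Q)\le m$ together with a feasible multiple of $I_{n+1}$, and uniqueness from strict convexity; on a neighborhood there are uniform eigenvalue bounds, and then come block recovery, $\tau<1$ from $\mathbf 0\in\mathrm{int}\,(P-x)^\circ$, and Lemma~\ref{lemma: useful_lemma} for $\K$. The only difference is in the continuity step: the paper does not invoke Berge but runs exactly your ``alternative route,'' approximating the single optimizer $\mathcal Q(x_0)$ by the feasible rescalings $\mathcal Q(x_0)/t_k$ with $t_k:=\max\{1,\max_i y_i(x_k)^\top\mathcal Q(x_0)y_i(x_k)\}\to1$, which is lighter than proving lower hemicontinuity of the whole feasible correspondence via Slater.
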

\begin{proof}
Set $d:=n+1$ and
$h_i(x):=[\tilde a_i(x)^\top,1]^\top$. Full dimensionality of
$(P-x)^\circ=\operatorname{conv}\{\tilde a_i(x):i\in[m]\}$ implies that the
$h_i(x)$ span $\R^d$. Fix $x_0\in\operatorname{int}(P)$ and set
$W(x):=\sum_i h_i(x)h_i(x)^\top$. Continuity and $W(x_0)\succ0$ give a
neighborhood $U$ of $x_0$ and constants $\eta>0$ and $C_{\mathrm{lift}}\geq1$ such that
$W(x)\succeq\eta I_d$ and $\|h_i(x)\|^2\leq C_{\mathrm{lift}}$ for $i\in[m]$ and $x\in U$.
The summed constraints make the positive-semidefinite relaxation of the feasible
set bounded; it is closed and therefore compact. It contains $C_{\mathrm{lift}}^{-1}I_d$; hence the determinant attains a positive
maximum, giving a positive-definite optimizer.
Strict convexity of $-\log\det$ makes this optimizer unique. Its optimality and \eqref{eq: lifted_MinCE} give
$\det\mathcal Q(x)\geq C_{\mathrm{lift}}^{-d}$ and $\eta\operatorname{tr}\mathcal Q(x) \leq\operatorname{tr}(W(x)\mathcal Q(x))\leq m$; 
so the eigenvalues of $\mathcal Q(x)$ are bounded uniformly above and away
from zero on $U$. In particular, optimizer sequences with parameters in $U$
have positive-definite cluster points.

Let $x_k\to x_0$, write $\mathcal Q_k:=\mathcal Q(x_k)$ and
$\mathcal Q_0:=\mathcal Q(x_0)$, and define
\begin{align*}
    t_k:=\max\left\{1,\max_{i\in[m]}
    h_i(x_k)^\top\mathcal Q_0h_i(x_k)\right\}.
\end{align*}
Then $t_k\to1$ and $\mathcal Q_0/t_k$ is feasible at $x_k$. Along any
convergent subsequence $\mathcal Q_k\to\overline{\mathcal Q}\succ0$,
feasibility passes to the limit, while optimality gives
\begin{align*}
    -\log\det\mathcal Q_k
    \leq-\log\det(\mathcal Q_0/t_k)
    =-\log\det\mathcal Q_0+d\log t_k.
\end{align*}
Therefore $\overline{\mathcal Q}$ is optimal at $x_0$. Strict convexity of
$-\log\det$ makes the optimizer unique, so
$\overline{\mathcal Q}=\mathcal Q_0$. Every cluster point has this value;
hence $\mathcal Q(x_k)\to\mathcal Q(x_0)$.

Block extraction and \eqref{eq: MinCE_primal_sol_from_reformulation} now give
continuity of $G,z,s,c,Q$, and thus of $\tau$; the scalar denominator in the
recovery formula is positive because both $G(x)$ and $Q(x)$ are positive
definite. Finally, $(P-x)^\circ$ contains the origin in its interior and is
contained in $\E(c(x),Q(x))$, so $\tau(x)<1$; applying
Lemma~\ref{lemma: useful_lemma} to the exact polar step gives
\eqref{eq: self_contained_K_formula} and the displayed returned shape.
Their continuity follows from the formulas.
\end{proof}

\begin{proposition}\label{prop: local_PL}
    Let $x^*$ be the center of $\MaxIE(P)$. There exist an open neighborhood $U_{\mathrm{PL}}\subseteq U_{\mathrm{QG}} \cap U_{\mathrm{shift}}$ of $x^*$ and $\sigma > 0$ such that
    \begin{align*}
        v(x) - v(x^*) \leq  \frac{\beta n (n+1)^2}{2\sigma \mu^2}\tau(x),~\forall x\in U_{\mathrm{PL}},
    \end{align*}
    where Lemma \ref{lemma: perturbed_ellip} supplies
    $(\beta,U_{\mathrm{shift}})$ and Proposition
    \ref{prop: local_quadratic_growth} supplies $(\mu,U_{\mathrm{QG}})$.
\end{proposition}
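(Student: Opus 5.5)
The plan is to bound the gap $v(x)-v(x^*)$ from above and below by multiples of $\|x-x^*\|^2$, and then trade $\|x-x^*\|$ for $\|c(x)\|$ using convexity of $v$ and the gradient formula. Finally, $\|c(x)\|^2$ is converted into $\tau(x)$ through a uniform lower eigenvalue bound on $Q(x)$. Write $\Delta:=x-x^*$ throughout.

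\textbf{Choice of $U_{\mathrm{PL}}$ and $\sigma$.} By Lemma~\ref{lemma: lifted_MinCE_continuity}, the map $x\mapsto Q(x)$ is continuous, and $Q(x^*)\succ0$. So I can take $\sigma:=\tfrac12\lambda_{\min}(Q(x^*))>0$ and an open ball $U_{\mathrm{PL}}\subseteq U_{\mathrm{QG}}\cap U_{\mathrm{shift}}$ around $x^*$ on which $Q(x)\succeq\sigma I_n$. On this set
\[
  \tau(x)=c(x)^\top Q(x)c(x)\ \ge\ \sigma\|c(x)\|^2 .
\]

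\textbf{Upper bound on the gap.} For $x\in U_{\mathrm{shift}}$, Lemma~\ref{lemma: perturbed_ellip} says $\bar\E(x)\supseteq(P-x)^\circ$. By MinCE optimality, $v(x)\le\log\V(\bar\E(x))$. Since $\bar\E(x)$ has shape $Q(x^*)/(1+\beta\|\Delta\|^2)$, its log-volume is
\[
  -\tfrac12\log\det Q(x^*)+\tfrac n2\log\bigl(1+\beta\|\Delta\|^2\bigr)
  = v(x^*)+\tfrac n2\log\bigl(1+\beta\|\Delta\|^2\bigr).
\]
Here the identity uses $v(x^*)=-\tfrac12\log\det Q(x^*)$, which holds because $\E(\mathbf0,Q(x^*))$ is the polar MinCE at $x^*$ (Lemma~\ref{lemma: facts_about_g}). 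Using $\log(1+s)\le s$,
\[
  v(x)-v(x^*)\le \tfrac{n\beta}{2}\|\Delta\|^2 .
\]

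\textbf{Bounding $\|\Delta\|$ by $\|c(x)\|$.} By convexity of $v$ and $\nabla v(x)=(n+1)c(x)$ (Proposition~\ref{proposition: v_gradient}),
\[
  v(x)-v(x^*)\le \nabla v(x)^\top\Delta=(n+1)c(x)^\top\Delta\le (n+1)\|c(x)\|\,\|\Delta\|.
\]
Combining with quadratic growth (Proposition~\ref{prop: local_quadratic_growth}) gives $\mu\|\Delta\|^2\le(n+1)\|c(x)\|\,\|\Delta\|$. Hence $\|\Delta\|\le (n+1)\|c(x)\|/\mu$; this is trivial when $\Delta=\mathbf0$.

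\textbf{Conclusion.} Substituting into the upper bound yields
\[
  v(x)-v(x^*)\le\frac{n\beta(n+1)^2}{2\mu^2}\|c(x)\|^2\le\frac{\beta n(n+1)^2}{2\sigma\mu^2}\tau(x),
\]
which is exactly the stated constant.

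\textbf{Main obstacle.} The step needing care is the choice of $U_{\mathrm{PL}}$. It must lie simultaneously inside $U_{\mathrm{shift}}$, inside $U_{\mathrm{QG}}$ (so that quadratic growth along $\Delta$ applies), and inside the region where $Q(x)\succeq\sigma I_n$. The last requirement rests entirely on the continuity from Lemma~\ref{lemma: lifted_MinCE_continuity}, and intersecting finitely many open neighborhoods of $x^*$ makes this routine. All remaining steps are one-line inequalities.
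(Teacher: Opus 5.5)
Your proposal is correct and follows essentially the same route as the paper. Both proofs combine the same four ingredients into the identical constant: the covering ellipsoid $\bar\E(x)$ for the upper bound $\tfrac{\beta n}{2}\|x-x^*\|^2$, convexity plus $\nabla v=(n+1)c$ and quadratic growth to get $\|c(x)\|\ge \tfrac{\mu}{n+1}\|x-x^*\|$, and continuity of $Q$ to get $\tau(x)\ge\sigma\|c(x)\|^2$ on a neighborhood of $x^*$.
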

\begin{proof}
    Convexity and Proposition \ref{prop: local_quadratic_growth} give, respectively,
    \begin{align*}
        v(x) -v(x^*) \leq \nabla v(x)^\top (x-x^*) \leq \|\nabla v(x)\|\|x-x^*\|.
    \end{align*}
    \begin{align*}
        v(x) - v(x^*) \geq \mu \|x-x^*\|^2,~\forall x\in U_{\mathrm{QG}}.
    \end{align*}
    For $x\in U_{\mathrm{QG}}\setminus\{x^*\}$, combining them with Proposition
    \ref{proposition: v_gradient} and dividing by $\|x-x^*\|$ gives
    \begin{align*}
        \|c(x)\| = \frac{1}{n+1}\|\nabla v(x)\|\geq \frac{\mu}{n+1} \|x-x^*\|.
    \end{align*}
    The same bound holds at $x=x^*$ because Lemma
    \ref{lemma: facts_about_g} gives $c(x^*)=\mathbf0$.
    Lemma~\ref{lemma: lifted_MinCE_continuity} and $Q(x^*)\succ0$ give $U$
    and $\sigma>0$ with
    $Q(x)\succeq\sigma I_n$ on $U$, hence
    \begin{align}
        \tau(x) = c(x)^\top Q(x)c(x) \geq \sigma \|c(x)\|^2 \geq \frac{\sigma \mu^2}{(n+1)^2}\|x-x^*\|^2, ~\forall x\in U\cap U_{\mathrm{QG}}.  \label{eq: quadratic_lb_tau}
    \end{align}
    Lemma \ref{lemma: perturbed_ellip} supplies $U_{\mathrm{shift}}$, $\beta>0$,
    and the covering ellipsoid
    \begin{align*}
        \bar{\E}(x)
        :=
        \left\{
        y\in\R^n:
        (y-\bar c(x))^\top \frac{Q(x^*)}{1+\beta\|x-x^*\|^2}(y-\bar c(x))\leq 1
        \right\}
    \end{align*}
    for every $x\in U_{\mathrm{shift}}$. MinCE optimality gives
    \begin{align*}
        v(x)\leq \log \V(\bar{\E}(x)), ~ \forall x\in U_{\mathrm{shift}}.
    \end{align*}
    The volume formula gives, successively,
    \begin{align*}
        v(x)
        \leq
        -\frac12\log\det\left(\frac{Q(x^*)}{1+\beta\|x-x^*\|^2}\right).
    \end{align*}
    As a result, we have 
    \begin{align*}
        v(x)-v(x^*)
        &\leq
        -\frac12\log\det\left(\frac{Q(x^*)}{1+\beta\|x-x^*\|^2}\right)
        +\frac12\log\det Q(x^*) \\
        &=
        -\frac12\left(\log\det Q(x^*)-n\log(1+\beta\|x-x^*\|^2)\right)
        +\frac12\log\det Q(x^*) \\
        &=
        \frac n2 \log(1+\beta\|x-x^*\|^2).
    \end{align*}
    Using $\log(1+t)\leq t$ gives
    \begin{align}
        v(x)-v(x^*)
        \leq
        \frac{\beta n}{2}\|x-x^*\|^2,
        \qquad \forall x\in U_{\mathrm{shift}}.
        \label{eq: local_quadratic_upper_v}
    \end{align}
    Combining \eqref{eq: quadratic_lb_tau} and
    \eqref{eq: local_quadratic_upper_v} yields
    \begin{align*}
        v(x)-v(x^*)
        &\leq
        \frac{\beta n (n+1)^2}{2\sigma \mu^2}\tau(x),~ \forall x\in U_{\mathrm{PL}}:=U_{\mathrm{shift}}\cap U\cap U_{\mathrm{QG}}.
    \end{align*}
    Here $U_{\mathrm{PL}}$ is open.
\end{proof}

\subsection{Convergence and iteration complexity of the polarity process}
We first exclude convergence to $\partial P$, i.e., the boundary of $P$.
\begin{lemma}\label{lemma: bounded_iterates}
    Iterates generated by the polarity process stay in the compact set
    \begin{align*}
        \mathcal{L}_{\leq v(x_0)} := \{x \in \mathrm{int}(P):~ v(x) \leq v(x_0)\}.
    \end{align*}
\end{lemma}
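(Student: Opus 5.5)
Two things need proof: the iterates never leave $\mathcal{L}_{\leq v(x_0)}$, and this sublevel set is compact.

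\emph{Membership.} By Lemma~\ref{lemma: descent_of_v}, $v(x_{k+1})=v(\K(x_k))\le v(x_k)$ for every $k$. Each $x_{k+1}$ lies in $\mathrm{int}(P)$, because it is the center of the inscribed ellipsoid $\E_{k+1}\subseteq P$, and that ellipsoid is full dimensional since $Q(x_k)\succ0$ and $\tau(x_k)<1$ (Lemma~\ref{lemma: lifted_MinCE_continuity}). Induction then gives $v(x_k)\le v(x_0)$, so every iterate lies in $\mathcal{L}_{\leq v(x_0)}$.

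\emph{Compactness.} The set is bounded because $P$ is a polytope. By Proposition~\ref{proposition: v_gradient}, $v$ is convex and differentiable, hence continuous, on $\mathrm{int}(P)$, so the set is relatively closed in $\mathrm{int}(P)$. It therefore suffices to show that $v(x)\to+\infty$ as $x\to\bar x\in\partial P$; then no sequence in the sublevel set can accumulate on $\partial P$, and the set is closed in $\R^n$.

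\emph{Blow-up at the boundary.} For the blow-up I would lower-bound $v$ by the volume of a simplex inside $(P-x)^\circ$. I would \emph{not} try to use $\max_i\|\tilde a_i(x)\|$ alone, since one long vertex does not force large volume. Lemma~\ref{lemma: v_in_S} gives the cleanest route: for any fixed $\lambda\in\Delta_m^{\mathrm{fs}}$, say uniform weights $\lambda_i=1/m$,
\[
v(x)\ \ge\ F(x,\lambda)+\tfrac n2\log n .
\]
The Cauchy--Binet expansion from the proof of Lemma~\ref{lemma: convexity_of_F} gives
\[
\det S(x,\lambda)=\sum_I \beta_I(\lambda)\prod_{i\in I}(1-a_i^\top x)^{-2}.
\]
So I need one index set $I$ with $\beta_I>0$, i.e.\ $\det H_I\neq0$, that contains an index $j$ with $a_j^\top\bar x=1$. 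As $x\to\bar x$, the factor $(1-a_j^\top x)^{-2}$ tends to $+\infty$, while the other factors in that product are bounded below by positive constants (each $1-a_i^\top x\le 1-\min_P a_i^\top y$ is bounded above on $P$). All terms are nonnegative, so $\det S(x,\lambda)\to\infty$ and hence $v(x)\to\infty$.

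\emph{Main obstacle: choosing $I$.} Such an $I$ must satisfy $\det H_I\neq0$, meaning the lifted vectors $[a_i^\top,1]^\top$, $i\in I$, are linearly independent. Since the full family spans $\R^{n+1}$ (full dimensionality, as noted after the definition of $\Delta_m^{\mathrm{fs}}$), the vector $[a_j^\top,1]^\top$ is nonzero. Any nonzero vector of a spanning family can be extended to a basis drawn from that family, which yields the required $I\ni j$ with $\beta_I(\lambda)>0$. The remaining details are routine: uniform positivity of the other factors, plus a sequential argument that a sequence $x_k\to\bar x\in\partial P$ with $v(x_k)\le v(x_0)$ is impossible.
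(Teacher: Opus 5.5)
Your proof is correct, but you reach the boundary blow-up by a different route from the paper.

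\textbf{The paper's route.} The paper argues geometrically. Since $P-s\subseteq\bar B_2(\mathbf 0,R)$ for every $s\in P$, polarity gives $(P-s_k)^\circ\supseteq\bar B_2(\mathbf 0,R^{-1})$. Hence the MinCE contains $\conv(\tilde a_{i^*}(s_k),\bar B_2(\mathbf 0,R^{-1}))$, whose volume diverges because $\|\tilde a_{i^*}(s_k)\|\to\infty$. This resolves the worry you raised, that one long vertex does not force large volume. It does, once the vertex is paired with the fixed ball that boundedness of $P$ places inside every shifted polar.

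\textbf{Your route.} You stay on the dual side. You bound $v$ below by $F(x,\lambda)+\frac n2\log n$ at uniform weights and read the divergence off the Cauchy--Binet expansion. A basis-extension step supplies an $I\ni j$ with $\det H_I\neq0$.

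\textbf{What each buys.} Your argument reuses machinery the paper already developed (Lemmas~\ref{lemma: v_in_S} and~\ref{lemma: convexity_of_F}). It also sidesteps the elementary volume estimate for the convex hull of a ball and a distant point, which the paper leaves implicit. As a bonus, it yields an explicit barrier-type lower bound $v(x)\geq-\log(1-a_j^\top x)+C_j$, with $C_j$ depending only on $P$. The paper's argument is shorter and purely geometric, and needs no dual representation.

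Your membership argument agrees with the paper's. It is slightly more careful, because you verify that each $x_{k+1}$ lies in $\mathrm{int}(P)$, so Lemma~\ref{lemma: descent_of_v} applies at the next step.
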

\begin{proof}
    By Lemma \ref{lemma: descent_of_v}, the sequence $\{v(x_k)\}_{k\geq0}$ is non-increasing and hence all iterates stay in the sublevel set $\mathcal{L}_{\leq v(x_0)}$. 
    We need to show $\mathcal{L}_{\leq v(x_0)}$ is compact. Since $\mathcal{L}_{\leq v(x_0)}\subseteq P$, we know $\mathcal{L}_{\leq v(x_0)}$ is bounded. 
    Proposition~\ref{proposition: v_gradient} shows that $v$ is differentiable,
    and hence continuous, over $\mathrm{int}(P)$.
    However, this only means $\mathcal{L}_{\leq v(x_0)}$ is closed relative to $\mathrm{int}(P)$: if $s_k \rightarrow s^*$ with $s_k \in \mathcal{L}_{\leq v(x_0)}$ 
    for all $k\in \N$ and $s^* \in \mathrm{int}(P)$, then $s^* \in \mathcal{L}_{\leq v(x_0)}$. It remains to show that any limit point $s^*$ cannot lie on $\partial P$, 
    so that $\mathcal{L}_{\leq v(x_0)}$ is closed in $\R^n$.

    For the purpose of contradiction, suppose $s_k\rightarrow s^*$ and $s^*\in \partial P$. Then by \eqref{eq: polytope}, at least one inequality becomes tight at $s^*$. Passing to a subsequence if necessary, we may assume  $a_{i^*}^\top s_k \rightarrow 1$ for some $i^*\in [m]$ and hence
    $\|\tilde{a}_{i^*}(s_k)\| =\|a_{i^*}/(1-a_{i^*}^\top s_k)\|\rightarrow\infty.$
    In other words, as $s_k\rightarrow \partial P$, the polar $(P-s_k)^\circ$ contains a point whose norm goes to infinity. Since $P$ is bounded, there exists $R>0$ such that $P-s \subseteq \bar{B}_2(\mathbf{0}, R)$ for all $s\in P$, where $\bar{B}_2(\mathbf{0}, R)$ is the closed Euclidean ball centered at the origin with radius $R$. Since polarity reverses inclusion, we have
    $(P-s)^\circ \supseteq \bar{B}_2(\mathbf{0},R)^\circ = \bar{B}_2(\mathbf{0},R^{-1}).$ Since $\mathrm{MinCE}((P-s_k)^\circ) \supseteq (P-s_k)^\circ \supseteq \mathrm{conv}(\tilde{a}_{i^*}(s_k), \bar{B}_2(\mathbf{0}, R^{-1}))$, we have
    \begin{align*}
         v(s_k) = &\log \V (\mathrm{MinCE}((P-s_k)^\circ)) \\
        \geq & \log \V (\mathrm{conv}(\tilde{a}_{i^*}(s_k), \bar{B}_2(\mathbf{0},R^{-1}))) \rightarrow \infty,
    \end{align*}
    contradicting the fact that $s_k \in \mathcal{L}_{\leq v(x_0)}$ for all $k\in \N$.
\end{proof}

Define the potential gap
\begin{align}\label{eq: v_gap}
    \delta_v(x) := v(x) - v(x^*),
\end{align}
where $x^*$ is the center of $\MaxIE(P)$.
\begin{lemma}\label{lemma: unique_minimizer_v}
For the MaxIE center $x^*$,
\[
    v(x)\ge v(x^*), \qquad x\in \operatorname{int}(P),
\]
and equality holds if and only if $x=x^*$. Equivalently,
\[
    \delta_v(x)=0 \quad\Longleftrightarrow\quad x=x^*.
\]
\end{lemma}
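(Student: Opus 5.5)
The plan is to compare $v(x)$ with the volume of $\MaxIE(P)$ through a polarity argument, and then pin down the equality case with the identity $\nabla v=(n+1)c$ and uniqueness of the MaxIE.

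\textbf{Lower bound.} Fix $x\in\mathrm{int}(P)$ and let $L:=\mathrm{MinCE}((P-x)^\circ)=\E(c(x),Q(x))$. Since $(P-x)^\circ$ contains the origin in its interior, so does $L$. By Lemma~\ref{lemma: lifted_MinCE_continuity} we have $\tau(x)<1$. Polarity reversal and the bipolar theorem give $x+L^\circ\subseteq P$, so $\V(L^\circ)\le\V(\MaxIE(P))$. Lemma~\ref{lemma: useful_lemma} then yields
\[
v(x)=\log\V(L)=-\log\V(L^\circ)+\tfrac{n+1}{2}\log\tfrac{1}{1-\tau(x)}\ge -\log\V(\MaxIE(P)).
\]
By part~2 of Lemma~\ref{lemma: facts_about_g}, the right-hand side equals $v(x^*)$, which proves $v(x)\ge v(x^*)$.

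\textbf{Equality case.} Suppose $v(x)=v(x^*)$. Both nonnegative terms in the display must then vanish. First, $\tau(x)=0$, so $c(x)=\mathbf0$ because $Q(x)\succ0$. Second, $\V(x+L^\circ)=\V(\MaxIE(P))$. Since $x+L^\circ\subseteq P$, uniqueness of the MaxIE gives $x+L^\circ=\MaxIE(P)$. Because $c(x)=\mathbf0$, Lemma~\ref{lemma: useful_lemma} shows that $L^\circ$ is centered at the origin, so the center of $\MaxIE(P)$ is $x$, i.e.\ $x=x^*$. The converse direction is trivial.

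\textbf{Alternative route.} One could instead argue through convexity: $v$ is convex and differentiable by Proposition~\ref{proposition: v_gradient}, and $\nabla v(x^*)=(n+1)c(x^*)=\mathbf0$, so $x^*$ is a global minimizer. Uniqueness would follow from the fact that any minimizer $x$ has $c(x)=\mathbf0$, hence is a fixed point of $\K$, and the argument above then identifies it as $x^*$. The direct polarity argument is cleaner, so I would present that one.

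\textbf{Main obstacle.} The key step is justifying that $x+L^\circ$ lies in $P$ and that centering forces $x=x^*$. The first point is the bipolar theorem for the polytope $P-x$, which contains the origin in its interior. The second uses uniqueness of the MaxIE, which is classical and was already used in Lemma~\ref{lemma: facts_about_g}.
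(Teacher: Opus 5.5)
Your proof is correct and follows essentially the same route as the paper: you bound $v(x)$ below by $-\log\V(\MaxIE(P))$ using the inscribed ellipsoid $x+L^\circ$ and the volume-product identity of Lemma~\ref{lemma: useful_lemma}, and you settle the equality case through $\tau(x)=0$, uniqueness of the MaxIE, and the fact that $L^\circ$ is then centered at the origin. The convexity-based alternative you sketch is not needed, since it ends up relying on the same uniqueness argument anyway.
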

\begin{proof}
For $x\in\operatorname{int}(P)$, write $ L_x:=\mathrm{MinCE}((P-x)^\circ)=\E(c(x),Q(x))$ and $\E_x^+:=x+L_x^\circ$.
The origin lies in the interior of $(P-x)^\circ$ and hence of $L_x$.
Moreover, $L_x\supseteq(P-x)^\circ$, so polarity reversal and the bipolar
theorem give $\E_x^+\subseteq P$. Thus $\E_x^+$ is an inscribed ellipsoid and
\begin{align*}
    v(x)+\log\V(\E_x^+)
    =\log\V(L_x)+\log\V(L_x^\circ) =\frac{n+1}{2}\log\frac{1}{1-\tau(x)}\geq0
\end{align*}
by Lemma~\ref{lemma: useful_lemma}. Consequently,
\begin{align*}
    v(x)
    \geq-\log\V(\E_x^+)
    \geq-\log\V(\MaxIE(P))
    =v(x^*),
\end{align*}
where the final equality follows from Lemma~\ref{lemma: facts_about_g}.

If $v(x)=v(x^*)$, equality holds throughout both displays. Hence
$\tau(x)=0$ and $\V(\E_x^+)=\V(\MaxIE(P))$. Uniqueness of the
maximum-volume inscribed ellipsoid gives $\E_x^+=\MaxIE(P)$. Since
$Q(x)\succ0$, $\tau(x)=0$ implies $c(x)=\mathbf0$; Lemma
\ref{lemma: useful_lemma} then shows that $L_x^\circ$ is centered at the
origin. Therefore $\E_x^+$ is centered at $x$, whereas $\MaxIE(P)$ is centered
at $x^*$, and so $x=x^*$. The converse follows directly by taking $x=x^*$.
\end{proof}

\begin{proposition}\label{prop: local_contraction}
    Let $x^*\in \mathrm{int}(P)$ be the center of $\MaxIE(P)$. Recall $\mu$ from Proposition \ref{prop: local_quadratic_growth}, $\beta$ from Lemma \ref{lemma: perturbed_ellip}, and $\sigma$ from Proposition \ref{prop: local_PL}.
    There exists an open neighborhood $U_{\mathrm{loc}} \subseteq U_{\mathrm{PL}}$ of $x^*$ and a constant
    $$q_{\mathrm{loc}} := 1 - \min \left\{\frac{1}{2}, \frac{\sigma \mu^2}{\beta n(n+1)} \right\} \in (0, 1)$$
    such that for every $x \in U_{\mathrm{loc}}$,
    \begin{align*}
        \delta_v(\K(x)) \leq q_{\mathrm{loc}} \delta_v(x)~\text{and}~ \K(x) \in U_{\mathrm{loc}}.
    \end{align*}
\end{proposition}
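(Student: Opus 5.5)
The proof combines the descent bound of Lemma~\ref{lemma: descent_of_v} with the local PL inequality of Proposition~\ref{prop: local_PL}. It then constructs an invariant neighborhood as a small sublevel set of $v$.

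\textbf{Step 1 (one-step contraction on $U_{\mathrm{PL}}$).} For $x\in U_{\mathrm{PL}}$, Lemma~\ref{lemma: descent_of_v} together with $\log(1/(1-\tau))\ge\tau$ gives
\begin{align*}
    \delta_v(\K(x))\le \delta_v(x)-\frac{n+1}{2}\tau(x).
\end{align*}
Proposition~\ref{prop: local_PL} gives
\begin{align*}
    \tau(x)\ge \frac{2\sigma\mu^2}{\beta n(n+1)^2}\,\delta_v(x).
\end{align*}
Substituting the second bound into the first yields
\begin{align*}
    \delta_v(\K(x))\le\Bigl(1-\frac{\sigma\mu^2}{\beta n(n+1)}\Bigr)\delta_v(x)\le q_{\mathrm{loc}}\,\delta_v(x).
\end{align*}
The factor $q_{\mathrm{loc}}$ lies in $(0,1)$ because of the $\min$ with $\tfrac12$. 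This step is purely algebraic.

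\textbf{Step 2 (invariant neighborhood).} Membership $\K(x)\in U_{\mathrm{PL}}$ is not automatic, so $U_{\mathrm{loc}}$ must be chosen to be forward invariant. Fix a closed ball $\bar B_2(x^*,r)\subseteq U_{\mathrm{PL}}\cap U_{\mathrm{QG}}$. Set $\varepsilon:=\mu r^2/2>0$ and define
\begin{align*}
    U_{\mathrm{loc}}:=\{x\in B_2(x^*,r):\delta_v(x)<\varepsilon\}.
\end{align*}
This set is open by continuity of $v$ (Proposition~\ref{proposition: v_gradient}) and contains $x^*$. Now take $x\in U_{\mathrm{loc}}$. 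Step~1 gives $\delta_v(\K(x))\le q_{\mathrm{loc}}\delta_v(x)<\varepsilon$. It remains to show $\|\K(x)-x^*\|<r$.

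\textbf{Step 3 (localizing $\K(x)$).} This is the main obstacle. Quadratic growth holds only inside $U_{\mathrm{QG}}$, so a small value of $\delta_v(\K(x))$ does not by itself place $\K(x)$ in the ball. I would close the gap with a convexity argument. The function $v$ is convex, and $\bar B_2(x^*,r)\subseteq U_{\mathrm{QG}}$. On the sphere $\|y-x^*\|=r$ we therefore have $\delta_v(y)\ge\mu r^2=2\varepsilon$.

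Suppose $y\in\operatorname{int}(P)$ satisfies $\|y-x^*\|\ge r$. Let $y'$ be the point where the segment from $x^*$ to $y$ crosses the sphere. Convexity of $\delta_v$ along the segment, with $\delta_v(x^*)=0$, gives $\delta_v(y)\ge\delta_v(y')\ge2\varepsilon$. So the sublevel set $\{\delta_v<\varepsilon\}$ lies entirely inside $B_2(x^*,r)$, and hence $U_{\mathrm{loc}}=\{\delta_v<\varepsilon\}$.

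Since $\K(x)\in\operatorname{int}(P)$ (Lemma~\ref{lemma: lifted_MinCE_continuity}) and $\delta_v(\K(x))<\varepsilon$, it follows that $\K(x)\in U_{\mathrm{loc}}$. If the convexity crossing argument ran into trouble, the fallback is continuity of $\K$ at its fixed point $x^*$ (Lemma~\ref{lemma: lifted_MinCE_continuity}) combined with shrinking $r$. The sublevel-set argument avoids that and gives invariance directly.
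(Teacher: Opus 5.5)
Your proposal is correct and follows the paper's proof essentially step for step. It uses the same one-step contraction on $U_{\mathrm{PL}}$ from Lemma~\ref{lemma: descent_of_v}, Proposition~\ref{prop: local_PL}, and $\log(1/(1-t))\ge t$, and the same convexity argument along the segment from $x^*$ to $\K(x)$ to show invariance. The only difference is the threshold defining $U_{\mathrm{loc}}$: the paper takes $m_r=\min_{\|z-x^*\|=r}\delta_v(z)$, positive by continuity and Lemma~\ref{lemma: unique_minimizer_v}. You instead take the explicit value $\mu r^2/2$ from quadratic growth on $\bar B_2(x^*,r)\subseteq U_{\mathrm{PL}}\subseteq U_{\mathrm{QG}}$, which works equally well.
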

\begin{proof}
    By Lemma \ref{lemma: descent_of_v}, Proposition \ref{prop: local_PL}, and the fact that $\log(1/(1-t))\geq t$ for $t\in[0,1)$, we have, for all $x\in U_{\mathrm{PL}}$,
    \begin{align*}
        \delta_v(\K(x)) = v(\K(x)) -v(x^*) \leq  & v(x) -v(x^*) - \frac{n+1}{2}\log\left(\frac{1}{1-\tau(x)}\right) \\
        \leq & \delta_v(x) -\frac{n+1}{2}\tau(x) \\
        \leq & \delta_v(x) - \frac{\sigma \mu^2}{\beta n(n+1)} \delta_v(x) \\
        \leq & q_{\mathrm{loc}}\delta_v(x).
    \end{align*}
    Choose $r>0$ with $\bar{B}_2(x^*,r)\subseteq U_{\mathrm{PL}}$ and set
    $$m_r := \min_{z} \{\delta_v(z): \|z-x^*\| = r\}. $$
    Continuity and Lemma \ref{lemma: unique_minimizer_v} give $m_r>0$. Define
    \begin{align*}
        U_\mathrm{loc} := \{x \in {B}_2(x^*, r): \delta_v(x) < m_r\}.
    \end{align*}
    If $x\in U_\mathrm{loc}$, then $\delta_v(\K(x))<m_r$. If
    $\K(x) \notin U_\mathrm{loc}$, then necessarily
    $\|x^*-\K(x)\|\geq r$. Define
    $$t := r/\|x^*-\K(x)\|\in (0, 1],~ z :=(1-t)x^* +t \K(x).$$
    Then $\|x^*-z\|=r$, while convexity gives
    \begin{align*}
        t\delta_v(\K(x)) + (1-t) \delta_v(x^*) \geq \delta_v(z)
        \Rightarrow  \delta_v(\K(x)) \geq \frac{\delta_v(z)}{t} \geq \frac{m_r}{t} \geq m_r,
    \end{align*}
    a contradiction. So we must have $\K(x)\in U_{\mathrm{loc}}$.
\end{proof}

We now prove trajectory-wise global linear contraction and an iteration bound
whose rate is not evaluated from $m,n,$ and $R$.
\begin{theorem}\label{thm: polarity_process_convergence_rate}
    For every $x_0\in \mathrm{int}(P)$, there exists $q := q(x_0)\in (0,1)$ such that iterates generated by the polarity process satisfy
    \begin{align}
        \delta_v(x_{k+1}) \leq q \delta_v(x_k), \qquad k\geq0. \notag
    \end{align}
    Consequently, the following results hold.
    \begin{enumerate}
        \item\textbf{(Asymptotic convergence)} The polarity process globally converges to $\MaxIE(P)$. More specifically, for any $x_0 \in \mathrm{int}(P)$, we have
        \begin{align}
            & x_k \rightarrow x^*, \notag \\
            & E_{k+1} := (1-\tau(x_k))(Q(x_k)^{-1} - c(x_k)c(x_k)^\top)\rightarrow Q(x^*)^{-1}, \notag
        \end{align}
        where $\E(x^*, Q(x^*)^{-1})=\MaxIE(P)$.
        \item \textbf{(Iteration complexity on volume ratio)} Let $\gamma \in (0, 1)$ and
        \begin{align*}
            N_\gamma^{\mathrm{ex}}
            = \left\lceil
            \frac{\log^+ (\delta_v(x_0)/\log(1/\gamma))}{\log(1/q)}
            \right\rceil.
        \end{align*}
        Within $N_\gamma^{\mathrm{ex}}+1$ iterations, the polarity process
        produces a $\gamma$-maximal ellipsoid of $P$:
        \begin{align}
            \frac{\Vol( \E_{N_\gamma^{\mathrm{ex}}+1})}
            {\Vol(\MaxIE(P))} \geq \gamma.
        \end{align}
    \end{enumerate}
\end{theorem}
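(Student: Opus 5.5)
The plan is to split the trajectory into a local phase near $x^*$ and a global phase inside the compact sublevel set.

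\emph{Global phase.} By Lemma~\ref{lemma: bounded_iterates}, all iterates lie in the compact set $\mathcal L:=\mathcal{L}_{\leq v(x_0)}\subset\mathrm{int}(P)$. Take $U_{\mathrm{loc}}$ and $q_{\mathrm{loc}}$ from Proposition~\ref{prop: local_contraction}. The set $K:=\mathcal L\setminus U_{\mathrm{loc}}$ is compact, and by Lemma~\ref{lemma: unique_minimizer_v} it does not contain $x^*$. On $K$, consider the ratio
\[
\rho(x):=\frac{\delta_v(\K(x))}{\delta_v(x)}.
\]
The denominator is continuous and positive on $K$ (Lemma~\ref{lemma: unique_minimizer_v}). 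The numerator is continuous, since $\K$ is continuous by Lemma~\ref{lemma: lifted_MinCE_continuity} and $v$ is continuous by Proposition~\ref{proposition: v_gradient}; here one also uses that $\K(x)\in\mathcal L\subset\mathrm{int}(P)$ by descent. By Lemma~\ref{lemma: descent_of_v}, $\rho(x)<1$ for every nonfixed $x$. Every $x\neq x^*$ is nonfixed, because a fixed point has $\tau(x)=0$, which forces $v(\K(x))=v(x)$ only at $x^*$. More directly, Lemma~\ref{lemma: unique_minimizer_v}'s equality argument shows that a fixed point is $x^*$. Hence $\rho$ attains a maximum $q_K<1$ on $K$ (if $K$ is empty, set $q_K:=0$).

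Define $q:=\max\{q_{\mathrm{loc}},q_K\}\in(0,1)$. At any iterate $x_k$, either $x_k\in U_{\mathrm{loc}}$ and Proposition~\ref{prop: local_contraction} gives the factor $q_{\mathrm{loc}}$, or $x_k\in K$ and the factor is $q_K$. If $x_k=x^*$, both sides vanish. This gives $\delta_v(x_{k+1})\leq q\,\delta_v(x_k)$. The main obstacle is making sure the ratio is continuous and the strict inequality holds uniformly; compactness together with the removal of the open neighborhood of $x^*$ handles exactly this.

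\emph{Consequences.} We have $\delta_v(x_k)\leq q^k\delta_v(x_0)\to0$. All iterates lie in the compact set $\mathcal L$, and every cluster point $\bar x$ satisfies $\delta_v(\bar x)=0$ by continuity, so $\bar x=x^*$ by Lemma~\ref{lemma: unique_minimizer_v}. Hence $x_k\to x^*$. Continuity of the returned shape (Lemma~\ref{lemma: lifted_MinCE_continuity}) then gives $E_{k+1}\to(1-\tau(x^*))(Q(x^*)^{-1}-c(x^*)c(x^*)^\top)=Q(x^*)^{-1}$, using $c(x^*)=\mathbf 0$ from Lemma~\ref{lemma: facts_about_g}.

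For the volume bound, the proof of Lemma~\ref{lemma: descent_of_v} gives $-\log\V(\E_{k+1})\leq v(x_k)$, because the returned ellipsoid is the polar of the MinCE and Lemma~\ref{lemma: useful_lemma} gives a nonnegative log-volume sum. Combining this with $v(x^*)=-\log\V(\MaxIE(P))$ yields
\[
\log\frac{\V(\MaxIE(P))}{\V(\E_{k+1})}\leq\delta_v(x_k)\leq q^k\delta_v(x_0).
\]
With $k=N_\gamma^{\mathrm{ex}}$, the choice of $N_\gamma^{\mathrm{ex}}$ makes the right-hand side at most $\log(1/\gamma)$; the $\log^+$ covers the case $\delta_v(x_0)\leq\log(1/\gamma)$, where $k=0$ already suffices. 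Therefore $\E_{N_\gamma^{\mathrm{ex}}+1}$ is $\gamma$-maximal.
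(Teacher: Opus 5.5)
Your structure matches the paper's proof: a compact sublevel set, removal of the invariant neighborhood $U_{\mathrm{loc}}$ from Proposition~\ref{prop: local_contraction}, and a compactness bound on the far set. The only difference there is cosmetic. You bound the ratio $\rho(x)=\delta_v(\K(x))/\delta_v(x)$ directly, using continuity of $v\circ\K$. The paper instead bounds the guaranteed decrease $\Psi(x)=\tfrac{n+1}{2}\log\frac{1}{1-\tau(x)}/\delta_v(x)$ from below, using continuity of $\tau$ and the quantitative inequality of Lemma~\ref{lemma: descent_of_v}. Your consequences also match the paper's: convergence of $x_k$ and $E_{k+1}$, and the volume-ratio bound via $-\log\V(\E_{k+1})\le v(x_k)$.

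The step that is not correctly justified is ``every $x\neq x^*$ is nonfixed.'' Your bound $q_K<1$ depends entirely on it, since a spurious fixed point in $K$ would give $\rho=1$ there. Your first reason is circular: at any fixed point $v(\K(x))=v(x)$ holds trivially, so it identifies nothing. Your second reason does not work either. The equality argument in Lemma~\ref{lemma: unique_minimizer_v} starts from $v(x)=v(x^*)$. A fixed point only gives $\tau(x)=0$, which turns the first inequality $v(x)\ge-\log\V(\E_x^+)$ into an equality but says nothing about whether $\E_x^+$ has maximal volume.

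The missing ingredient is first-order optimality, and it fixes the proof:
\begin{itemize}
\item At a fixed point, \eqref{eq: self_contained_K_formula} and $Q(x)\succ0$ force $c(x)=\mathbf 0$.
\item Then $\nabla v(x)=(n+1)c(x)=\mathbf 0$ by Proposition~\ref{proposition: v_gradient}.
\item Convexity of $v$ makes $x$ a global minimizer, and Lemma~\ref{lemma: unique_minimizer_v} gives $x=x^*$.
\end{itemize}
This is exactly how the paper shows $\tau>0$ on $U_{\mathrm{far}}$. With this inserted, your proof is complete.
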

\begin{proof}
    Lemma \ref{lemma: bounded_iterates} places all iterates in the compact set
    $\mathcal{L}_{\leq v(x_0)}$. Since $U_{\mathrm{loc}}$ is open,
    $U_{\mathrm{far}}:=\mathcal{L}_{\leq v(x_0)}\setminus U_{\mathrm{loc}}$ is
    compact. If it is empty, take $q=q_{\mathrm{loc}}$. Otherwise, Lemma
    \ref{lemma: descent_of_v} gives, for every $x\in U_{\mathrm{far}}$,
    \begin{align*}
        \frac{n+1}{2}\log \frac{1}{1-\tau(x)}
        \leq v(x) - v(\K(x))
        \leq v(x)- v(x^*) = \delta_v(x).
    \end{align*}
    Since $x^*\in U_{\mathrm{loc}}$, Lemma
    \ref{lemma: unique_minimizer_v} gives $\delta_v(x)>0$ on
    $U_{\mathrm{far}}$. If $c(x)=\mathbf0$ there, Proposition
    \ref{proposition: v_gradient} gives $\nabla v(x)=\mathbf0$; convexity would
    make $x$ a global minimizer of $v$, contradicting Lemma
    \ref{lemma: unique_minimizer_v}. Thus $c(x)\ne\mathbf0$ and $\tau(x)>0$.
    Therefore
    \begin{align*}
        \Psi(x) := \frac{\frac{n+1}{2}\log \frac{1}{1-\tau(x)}}{\delta_v(x)}
    \end{align*}
    is continuous with values in $(0,1]$ on the compact set
    $U_{\mathrm{far}}$: continuity of $\tau$ follows from Lemma
    \ref{lemma: lifted_MinCE_continuity}, while Proposition
    \ref{proposition: v_gradient} makes $\delta_v$ continuous. Hence
    \begin{align*}
        \Psi^* := \min_{x\in U_{\mathrm{far}}} \Psi(x) > 0,
    \end{align*}
    and the descent bound gives
    \begin{align*}
        \delta_v(\K(x))
        \leq
        \delta_v(x)-\Psi(x)\delta_v(x)
        \leq (1-\Psi^*)\delta_v(x).
    \end{align*}
    Set $q_{\mathrm{far}}:=\max\{0,1-\Psi^*\}<1$ and
    $q:=\max\{q_{\mathrm{loc}},q_{\mathrm{far}}\}\in(0,1)$. Both cases give
    $\delta_v(x_{k+1})\leq q\delta_v(x_k)$. Compactness and contraction make every limit point satisfy $\delta_v=0$,
    hence equal $x^*$ by Lemma \ref{lemma: unique_minimizer_v}; thus $x_k\to x^*$.
    The returned-shape continuity in Lemma
    \ref{lemma: lifted_MinCE_continuity} and Lemma
    \ref{lemma: facts_about_g} give
    $E_{k+1}\to Q(x^*)^{-1}$, and the latter lemma identifies
    $\E(x^*,Q(x^*)^{-1})$ with $\MaxIE(P)$.

    Finally, Lemmas \ref{lemma: useful_lemma} and
    \ref{lemma: facts_about_g}, the definition of $v$, the contraction,
    and the stated value of $N_\gamma^{\mathrm{ex}}$ give
    \begin{align*}
        & \log \left(\frac{\V(\MaxIE(P))}
        {\V(\E_{N_\gamma^{\mathrm{ex}}+1})}\right) \\
        = & \log \V(\MaxIE(P))
        - \log \V(\E_{N_\gamma^{\mathrm{ex}}+1}) \\
        = & -\log \V(\mathrm{MinCE}((P-x^*)^\circ)) \\
        &\quad + \log \V(\mathrm{MinCE}((P-x_{N_\gamma^{\mathrm{ex}}})^\circ))
        - \frac{n+1}{2}\log \frac{1}{1-\tau(x_{N_\gamma^{\mathrm{ex}}})}\\
        =  & v(x_{N_\gamma^{\mathrm{ex}}}) - v(x^*)
        - \frac{n+1}{2}\log \frac{1}{1-\tau(x_{N_\gamma^{\mathrm{ex}}})} \\
        \leq & \delta_v(x_{N_\gamma^{\mathrm{ex}}})
        \leq q^{N_\gamma^{\mathrm{ex}}}\delta_v(x_0)
        \leq \log (1/\gamma).
    \end{align*}
    This completes the proof. 
\end{proof}

\begin{corollary}\label{corollary: bounded radius ratio}
    Suppose there exist $r_1 > 0$ and $r_2 > 0$ such that
    \begin{align*}
        \bar{B}_2(\mathbf{0}, r_1) \subseteq P \subseteq \bar{B}_2(\mathbf{0}, r_2).
    \end{align*}
    Let $\gamma\in(0,1)$, $R=r_2/r_1\geq1$, and $x_0=\mathbf{0}$. Then
    \begin{align*}
            N_\gamma^{\mathrm{ex}} \leq \left\lceil
            \frac{\log^+\!\left(n\log R/\log(1/\gamma)\right)}{\log(1/q)}
            \right\rceil.
        \end{align*}
\end{corollary}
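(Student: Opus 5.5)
Since $N_\gamma^{\mathrm{ex}}$ is monotone nondecreasing in $\delta_v(x_0)$, it suffices to show $\delta_v(\mathbf 0)\le n\log R$. The bound then follows by plugging into the formula of Theorem~\ref{thm: polarity_process_convergence_rate}, using monotonicity of $\log^+$ and of the ceiling. Here $q=q(\mathbf 0)$ is the trajectory constant supplied by that theorem for $x_0=\mathbf 0$. Note that $\mathbf 0\in\mathrm{int}(P)$ because $\bar B_2(\mathbf 0,r_1)\subseteq P$, so the process is well defined.

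\textbf{Upper bound on $v(\mathbf 0)$.} Polarity reverses inclusion, so $P\supseteq \bar B_2(\mathbf 0,r_1)$ gives
\[
P^\circ\subseteq \bar B_2(\mathbf 0,r_1)^\circ=\bar B_2(\mathbf 0,1/r_1).
\]
This ball is a covering ellipsoid of $(P-\mathbf 0)^\circ$, and by MinCE optimality and the normalized volume convention $v(\mathbf 0)\le \log(r_1^{-n})=-n\log r_1$.

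\textbf{Lower bound on $v(x^*)$.} By Lemma~\ref{lemma: facts_about_g}, $v(x^*)=-\log\V(\MaxIE(P))$. Since $\MaxIE(P)\subseteq P\subseteq \bar B_2(\mathbf 0,r_2)$ and volume is monotone under inclusion, we get $\V(\MaxIE(P))\le r_2^n$ in normalized units. Indeed, an ellipsoid $\E(c,B^{-2})$ inside a ball of radius $r_2$ has all semi-axes at most $r_2$, so $\det B\le r_2^n$. Hence $v(x^*)\ge -n\log r_2$.

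Subtracting the two bounds gives $\delta_v(\mathbf 0)\le n\log(r_2/r_1)=n\log R$, and therefore
\[
\log^+\!\bigl(\delta_v(\mathbf 0)/\log(1/\gamma)\bigr)\le \log^+\!\bigl(n\log R/\log(1/\gamma)\bigr),
\]
which gives the claimed bound on $N_\gamma^{\mathrm{ex}}$. When $R=1$ the right-hand side is $\log^+(0)=0$, and we have $\delta_v(\mathbf 0)=0$ by Lemma~\ref{lemma: unique_minimizer_v}, so the two conventions are consistent.

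The only point requiring care is the normalization. The volume of $\bar B_2(\mathbf 0,\rho)$ in the normalized convention \eqref{eq: scaled_ellip_volume} is $\rho^n$, and this must be used consistently on both sides. No step is a serious obstacle.
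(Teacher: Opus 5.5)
Your proposal is correct and follows the paper's proof: you bound $v(\mathbf 0)\le -n\log r_1$ using the polar ball $\bar B_2(\mathbf 0,1/r_1)$ as a covering ellipsoid, bound $v(x^*)=-\log\V(\MaxIE(P))\ge -n\log r_2$ via $P\subseteq\bar B_2(\mathbf 0,r_2)$, and substitute $\delta_v(\mathbf 0)\le n\log R$ into the formula for $N_\gamma^{\mathrm{ex}}$. The only cosmetic difference is the case $R=1$: the paper treats it separately with a geometric argument, while you absorb it into the same inequality together with $\delta_v\ge 0$ from Lemma~\ref{lemma: unique_minimizer_v}.
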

\begin{proof}
    When $R=1$, the two balls coincide, so the sandwich forces $P$ to equal
    that ball and $x_0$ is already the center of $\MaxIE(P)$; the convention
    $\log^+(0)=0$ gives $N_\gamma^{\mathrm{ex}}=0$. For $R>1$, proceed as follows.
    Polarity reversal gives
    \begin{align*}
         (P-x_0)^\circ  = P^\circ \subseteq \bar{B}_2(\mathbf{0}, r_1)^\circ = \bar{B}_2(\mathbf{0}, 1/r_1).
    \end{align*}
    As a result, we have 
    \begin{align*}
        v(x_0) \leq &\log \V(\bar{B}_2(\mathbf{0}, 1/r_1)) = -n \log r_1.
    \end{align*}
    Lemma \ref{lemma: facts_about_g} and
    $P\subseteq\bar B_2(\mathbf{0},r_2)$ give
    \begin{align*}
        v(x^*) = & - \log \V(\MaxIE(P)) \\
        \geq & - \log \V( \bar{B}_2(\mathbf{0}, r_2) ) = -n \log r_2.
    \end{align*}
    As a result, we have $\delta_v(x_0) = v(x_0) - v(x^*) \leq n \log R$, substituting which in Theorem \ref{thm: polarity_process_convergence_rate} proves the claim. 
\end{proof}

\section{Inexact polarity process and conditional arithmetic estimates}\label{sec:inexact-polarity-process}
Practical implementations compute $\mathrm{MinCE}((P-x_k)^\circ)$ inexactly.
For $x\in\mathrm{int}(P)$, we adopt the lifted convex formulation
\eqref{eq: lifted_MinCE}:
\begin{align*}
    \min_{\mathcal{Q} \in \S^{n+1}_{++}} \quad & -\log \det \mathcal{Q} \\
    \mathrm{s.t.}\quad &
    \begin{bmatrix}
        \tilde{a}_i(x)\\ 1
    \end{bmatrix}^\top
    \mathcal{Q}
    \begin{bmatrix}
        \tilde{a}_i(x)\\ 1
    \end{bmatrix} \leq 1, ~\forall i\in[m].
\end{align*}
Section \ref{subsec: inexact_pp} analyzes an inexact solution oracle for
\eqref{eq: lifted_MinCE}, while Section
\ref{subsec: arith} applies existing MinCE solvers to derive conditional arithmetic estimates.

\subsection{Inexact polarity process}\label{subsec: inexact_pp}
A feasible lifted solution yields a covering ellipsoid in $\R^n$ whose polar is
inscribed in $P$.
\begin{lemma}\label{lemma: inexact_basic_properties}
    Let $x\in \mathrm{int}(P)$ and
    \begin{align}
       \widehat{\mathcal{Q}}(x) := \begin{bmatrix}
            \widehat{G}(x) & \widehat{z}(x)\\\widehat{z}(x)^\top & \widehat{s}(x)
        \end{bmatrix}
        \in \S^{n+1}_{++} \notag
    \end{align}
    be a feasible solution to \eqref{eq: lifted_MinCE}. Define
    \begin{align}
        \widehat{\beta}(x) := 1 - \widehat{s}(x) + \widehat{z}(x)^\top \widehat{G}(x)^{-1}\widehat{z}(x). \notag
    \end{align}
    Then:
    \begin{enumerate}
        \item  We have $\widehat{\beta}(x) > 0$, and $(P-x)^\circ \subseteq \E(\widehat{c}, \widehat{Q})$, where
            \begin{align}
                \widehat{c}(x): = -\widehat{G}^{-1}(x) \widehat{z}(x), \quad \widehat{Q}(x) := \frac{\widehat{G}(x)}{\widehat{\beta}(x)}. \notag
            \end{align}
        \item It holds that $\wh{c}(x)^\top \wh{Q}(x) \wh{c}(x) < 1$ and $1 - \wh{s}(x) > 0$.
        \item It holds that $\E(\wh{\K}(x), \wh{E}(x)) \subseteq P$, where
        \begin{align*}
            \wh{\K}(x) := & x + \frac{\wh{z}(x)}{1-\wh{s}(x)}, \\
            \wh{E}(x) := & (1-\wh{s}(x)) \left(\wh{G}(x)^{-1} - \frac{\wh{G}(x)^{-1}\wh{z}(x)\wh{z}(x)^{\top}\wh{G}(x)^{-1}}{\wh{\beta}(x)}\right);
        \end{align*}
    \end{enumerate}
\end{lemma}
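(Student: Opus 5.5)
The plan is to work with the quadratic form of $\wh{\mathcal Q}$ on lifted vectors $[y^\top,1]^\top$ and complete the square; this mirrors the exact recovery formula \eqref{eq: MinCE_primal_sol_from_reformulation}. Abbreviate $\wh G,\wh z,\wh s,\wh\beta$. Since $\wh{\mathcal Q}\succ0$, the block $\wh G\succ0$ and the Schur complement $\wh s-\wh z^\top\wh G^{-1}\wh z$ is positive. For every $y\in\R^n$,
\[
[y^\top,1]\,\wh{\mathcal Q}\,[y^\top,1]^\top=(y-\wh c)^\top\wh G(y-\wh c)+\wh s-\wh z^\top\wh G^{-1}\wh z,
\]
with $\wh c=-\wh G^{-1}\wh z$. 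Consequently the lifted constraint $\le 1$ is equivalent to $(y-\wh c)^\top\wh G(y-\wh c)\le\wh\beta$.

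\textbf{Part 1.} We use the point $y=\mathbf0$, which lies in $\mathrm{int}((P-x)^\circ)$ because $P$ is bounded. Since $\mathbf0$ is a strict convex combination of the $\tilde a_i$ and the lifted quadratic form is convex in $y$, its value at $\mathbf0$ is at most $1$. In fact the value there is exactly $\wh s$, which gives $\wh s\le 1$. Then $\wh\beta=(1-\wh s)+\wh z^\top\wh G^{-1}\wh z$. To get $\wh\beta>0$ we argue differently: the set $\{y:(y-\wh c)^\top\wh G(y-\wh c)\le\wh\beta\}$ contains the full-dimensional set $(P-x)^\circ$, because each $\tilde a_i$ satisfies the constraint and the set is convex. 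If $\wh\beta\le 0$, this set would be at most a single point, which is a contradiction. Dividing the constraint by $\wh\beta$ then shows $(P-x)^\circ\subseteq\E(\wh c,\wh Q)$.

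\textbf{Part 2.} First, $\wh c^\top\wh Q\wh c=\wh z^\top\wh G^{-1}\wh z/\wh\beta$. The inequality $\wh c^\top\wh Q\wh c<1$ is therefore equivalent to $1-\wh s>0$, since $\wh\beta>0$. To prove strictness, we use that $\mathbf0$ lies in the interior of $(P-x)^\circ\subseteq\E(\wh c,\wh Q)$. A point in the interior of a closed ellipsoid satisfies the defining inequality strictly, since on the boundary the value equals $1$ and interior points of the ellipsoid have value $<1$. Applying this at $y=\mathbf0$ gives $\wh c^\top\wh Q\wh c<1$, and hence $1-\wh s>0$.

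\textbf{Part 3.} Apply Lemma~\ref{lemma: useful_lemma} to $\E(\wh c,\wh Q)$, which contains the origin in its interior by Part 2. The polar has center $-\wh Q\wh c/(1-\wh c^\top\wh Q\wh c)$. Simplifying with $1-\wh c^\top\wh Q\wh c=(1-\wh s)/\wh\beta$ turns this center into $\wh G^{-1}\wh z\cdot\wh\beta/(\wh\beta(1-\wh s))\cdot\wh G\wh G^{-1}$, which reduces to $\wh z/(1-\wh s)$ after the $\wh G$ factors cancel. The shape is $(1-\wh c^\top\wh Q\wh c)(\wh Q^{-1}-\wh c\wh c^\top)$. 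Simplifying with the same identity gives $\frac{1-\wh s}{\wh\beta}\bigl(\wh\beta\wh G^{-1}-\wh G^{-1}\wh z\wh z^\top\wh G^{-1}\bigr)=\wh E$.

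Polarity reverses the inclusion of Part 1, and the bipolar theorem gives $((P-x)^\circ)^\circ=P-x$. Together these yield $\E(\wh c,\wh Q)^\circ\subseteq P-x$. Translating by $x$ then gives $\E(\wh\K(x),\wh E(x))\subseteq P$, which proves Part 3.

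The only delicate point in this argument is establishing $\wh\beta>0$ and the strict inequality in Part 2 without assuming optimality; both rely on the full-dimensionality of $(P-x)^\circ$ and on the origin lying in its interior. The remaining steps are algebraic simplifications.
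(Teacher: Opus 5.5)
Your proof is correct and follows essentially the same route as the paper's: complete the square on the height-one section, use full-dimensionality of $(P-x)^\circ$ to get $\wh\beta>0$, use $\mathbf{0}\in\mathrm{int}((P-x)^\circ)$ for $\wh c^\top\wh Q\wh c<1$ and hence $1-\wh s>0$, and finish with Lemma~\ref{lemma: useful_lemma} together with polarity reversal and the bipolar theorem. The only blemishes are cosmetic: your intermediate expression for the polar center is not a well-formed product as written (it should read $-\wh Q\wh c=\wh z/\wh\beta$, divided by $(1-\wh s)/\wh\beta$), and the preliminary bound $\wh s\le 1$ in Part 1 is not needed.
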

\begin{proof}
    \begin{enumerate}
        \item Intersect the lifted ellipsoid with the height-one hyperplane:
        $$
            \E_{\mathrm{intersection}} = \left \{u\in \R^n:~  \begin{bmatrix}
                u \\ 1
            \end{bmatrix}
            ^\top \wh{\mathcal{Q}}(x)
            \begin{bmatrix}
                u \\ 1
            \end{bmatrix} \leq 1\right \},
        $$
        Completing the square gives
        \begin{align}
            \begin{bmatrix}
                u \\ 1
            \end{bmatrix}
            ^\top \wh{\mathcal{Q}}(x)
            \begin{bmatrix}
                u \\ 1
            \end{bmatrix}
            =  & u^\top \wh{G}(x)u + 2\wh{z}(x)^\top u + \wh{s}(x) \notag \\
            = & (u+\wh{G}(x)^{-1}\wh{z}(x))^\top\wh{G}(x)
            (u+\wh{G}(x)^{-1}\wh{z}(x)) \notag \\
            &\quad +\wh{s}(x)-\wh{z}(x)^\top\wh{G}(x)^{-1}\wh{z}(x).\notag
        \end{align}
        Thus membership is equivalent to
        $(u-\wh c(x))^\top\wh G(u-\wh c(x))\leq\wh\beta(x)$. The section
        contains the full-dimensional hull of the lifted vertices, so
        $\wh\beta(x)>0$ and the section is $\E(\wh c(x),\wh Q(x))$ containing
        $(P-x)^\circ$.
        \item Because $\mathbf{0}\in\mathrm{int}((P-x)^\circ)$,
        $\wh c(x)^\top\wh Q(x)\wh c(x)<1$. The definitions give
        $$
            1-\wh{s}(x) = \wh{\beta}(x) - \widehat{z}(x)^\top \widehat{G}(x)^{-1}\widehat{z}(x) = \wh{\beta}(x) (1 - \wh{c}(x)^\top \wh{Q}(x)\wh{c}(x)) >0.
        $$
        \item Lemma \ref{lemma: useful_lemma} gives the polar center and shape:
        \begin{align*}
            & -\frac{\wh{Q}(x)\wh{c}(x)}{1-\wh{c}(x)^\top \wh{Q}(x)\wh{c}(x)}
            = \frac{\wh{z}(x)}{1-\wh{s}(x)}, \\
            &(1-\wh{c}(x)^\top \wh{Q}(x)\wh{c}(x))
            \left(\wh{Q}(x)^{-1}-\wh{c}(x)\wh{c}(x)^\top\right)\\
            = & 
            (1-\wh{s}(x))
            \left(
                \wh{G}(x)^{-1}
                -
                \frac{\wh{G}(x)^{-1}\wh{z}(x)\wh{z}(x)^\top\wh{G}(x)^{-1}}{\wh{\beta}(x)}
            \right),
        \end{align*}
        and hence $\E(\wh\K(x),\wh E(x))$ is this polar shifted by $x$ and is
        contained in $P$.
    \end{enumerate}
\end{proof}
This covering yields an inscribed ellipsoid and defines the following inexact
oracle.
\begin{definition}\label{def: inexact oracle}
    For $x\in\operatorname{int}(P)$, let $\mathcal Q(x)$ solve \eqref{eq: lifted_MinCE}.
    An inexact MinCE oracle of tolerance $\epsilon\geq0$ returns a feasible
    \begin{align}
        \widehat{\mathcal{Q}}(x) := \begin{bmatrix}
            \widehat{G}(x) & \widehat{z}(x) \\
            \widehat{z}(x)^\top & \widehat{s}(x)
        \end{bmatrix}
        \in \S^{n+1}_{++} \notag
    \end{align}
    satisfying
    \begin{align*}
            -\log \det \widehat{\mathcal{Q}}(x)   \leq  -\log \det \mathcal{Q}(x) + \epsilon.
    \end{align*}
    We write $\wh{\mathcal Q}(x)=\mathrm{MinCE}_{\epsilon}((P-x)^\circ)$.
\end{definition}

Algorithm \ref{alg: inexact_polarity_process} displays the inexact iteration for a supplied tolerance $\epsilon$. 
Theorem \ref{thm: inexact_fixed_tolerance} gives an estimate of $\epsilon$ given the target volume ratio $\gamma$. 
\begin{algorithm}[H]
\caption{Inexact Polarity Process}
\label{alg: inexact_polarity_process}
\begin{algorithmic}[1]
\State \textbf{Input:} $x_0\in\mathrm{int}(P)$, and tolerance $\epsilon\geq0$; 
\For{$k=0,1,2\ldots,$}
\State invoke an inexact MinCE oracle with tolerance $\epsilon$:
\begin{align}
        \widehat{\mathcal{Q}}(x_k) = \begin{bmatrix}
            \widehat{G}(x_k) & \widehat{z}(x_k) \\
            \widehat{z}(x_k)^\top & \widehat{s}(x_k)
        \end{bmatrix}
        \gets \mathrm{MinCE}_{\epsilon}((P-x_k)^\circ); \notag
    \end{align}
\State compute the primal inscribed ellipsoid $\E(x_{k+1}, E_{k+1})$ with
\begin{align}
    x_{k+1} \gets & \wh{\K}(x_k) = x_k + \frac{\wh{z}(x_k)}{1-\wh{s}(x_k)}, \notag \\
    E_{k+1} \gets & \wh{E}(x_k) = (1-\wh{s}(x_k))  \left(\wh{G}(x_k)^{-1} - \frac{\wh{G}(x_k)^{-1}\wh{z}(x_k)\wh{z}(x_k)^{\top}\wh{G}(x_k)^{-1}}{1-\wh{s}(x_k) + \wh{z}(x_k)^\top \wh{G}(x_k)^{-1}\wh{z}(x_k)}\right)\notag;
\end{align}
\EndFor
\end{algorithmic}
\end{algorithm}

Because inexact MinCE solutions need not satisfy Lemma
\ref{lemma: descent_of_v}, Algorithm \ref{alg: inexact_polarity_process} may
leave $\mathcal{L}_{\leq v(x_0)}$; we therefore enlarge it to
\begin{align}\label{eq: enlarged_sublevel_set}
    \mathcal{L}_{\leq v(x_0)+1} := \{ x \in \operatorname{int}(P): v(x) \leq v(x_0) + 1\}.
\end{align}
For small $\epsilon$, iterates remain here. This compact region makes the exact
polarity step uniformly contractive and provides a stable reference for the
perturbation estimates.
\begin{lemma}\label{lemma: inexact_sublevel_set}
    The sublevel set $\mathcal{L}_{\leq v(x_0)+1}$ is compact. Moreover, there exists $q_0\in (0,1)$ such that
        \begin{equation}\label{eq: inexact_q0}
            \delta_v(\K(x))\le q_0\delta_v(x), ~ \forall x\in \mathcal{L}_{\leq v(x_0)+1}.
        \end{equation}
\end{lemma}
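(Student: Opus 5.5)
The argument reuses the proof of Lemma~\ref{lemma: bounded_iterates} and of Theorem~\ref{thm: polarity_process_convergence_rate}, with the level $v(x_0)$ replaced by $v(x_0)+1$.

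\emph{Compactness.} The set $\mathcal{L}_{\leq v(x_0)+1}$ lies in the bounded set $P$. It is closed relative to $\mathrm{int}(P)$ because $v$ is continuous there (Proposition~\ref{proposition: v_gradient}). It remains to rule out limit points on $\partial P$. Suppose $s_k\in\mathcal{L}_{\leq v(x_0)+1}$ and $s_k\to s^*\in\partial P$. Then some $\|\tilde a_{i^*}(s_k)\|\to\infty$. Since $(P-s_k)^\circ\supseteq\bar B_2(\mathbf0,R^{-1})$, the volume lower bound via $\conv(\tilde a_{i^*}(s_k),\bar B_2(\mathbf0,R^{-1}))$ forces $v(s_k)\to\infty$. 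This contradicts $v(s_k)\le v(x_0)+1$, so the set is closed in $\R^n$ and hence compact.

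\emph{Uniform contraction.} Let $U_{\mathrm{loc}}$ and $q_{\mathrm{loc}}$ be as in Proposition~\ref{prop: local_contraction}. For $x\in U_{\mathrm{loc}}$ we directly have $\delta_v(\K(x))\le q_{\mathrm{loc}}\delta_v(x)$. Only the contraction inequality from that proposition is used here, not the invariance of $U_{\mathrm{loc}}$. Now set $U_{\mathrm{far}}:=\mathcal{L}_{\leq v(x_0)+1}\setminus U_{\mathrm{loc}}$. This set is compact, as a closed subset of a compact set. If it is empty, take $q_0=q_{\mathrm{loc}}$.

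Otherwise, on $U_{\mathrm{far}}$ we have $\delta_v>0$ by Lemma~\ref{lemma: unique_minimizer_v}, since $x^*\in U_{\mathrm{loc}}$. We also have $\tau>0$. Indeed, $\tau(x)=0$ would force $c(x)=\mathbf0$, hence $\nabla v(x)=\mathbf0$, making $x$ a global minimizer by convexity and contradicting uniqueness. By Lemma~\ref{lemma: descent_of_v}, the ratio
\[
\Psi(x):=\frac{\frac{n+1}{2}\log\frac{1}{1-\tau(x)}}{\delta_v(x)}
\]
lies in $(0,1]$, because $v(\K(x))\ge v(x^*)$. It is continuous by Lemma~\ref{lemma: lifted_MinCE_continuity} and Proposition~\ref{proposition: v_gradient}. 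So $\Psi^*:=\min_{U_{\mathrm{far}}}\Psi>0$, and Lemma~\ref{lemma: descent_of_v} gives $\delta_v(\K(x))\le(1-\Psi^*)\delta_v(x)$.

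Finally set $q_0:=\max\{q_{\mathrm{loc}},1-\Psi^*\}$. We have $q_0<1$ since $\Psi^*>0$. We also have $q_0>0$ since $q_{\mathrm{loc}}>0$. Hence $q_0\in(0,1)$.

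The main delicate point is compactness near the boundary, which the polar-volume blowup argument handles. Everything else is a transcription of the exact-case argument.
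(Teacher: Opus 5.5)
Your proposal is correct and follows the paper's own proof essentially verbatim: it gets compactness from the boundary-blowup argument of Lemma~\ref{lemma: bounded_iterates}, and it gets $q_0$ by rerunning the local/far split of Theorem~\ref{thm: polarity_process_convergence_rate} with $U_{\mathrm{far}}:=\mathcal{L}_{\leq v(x_0)+1}\setminus U_{\mathrm{loc}}$ and the continuous ratio $\Psi$. Two of your remarks are accurate: only the one-step contraction on $U_{\mathrm{loc}}$ is needed, not its invariance, and since $\Psi\le 1$ the paper's $\max\{0,1-\Psi^*\}$ is redundant.
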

\begin{proof}
    The boundary-blowup proof of Lemma \ref{lemma: bounded_iterates} applies
    verbatim to this finite sublevel and proves compactness. Repeating the
    proof of Theorem \ref{thm: polarity_process_convergence_rate} with
    $U_\mathrm{far}:=\mathcal{L}_{\leq v(x_0)+1}\setminus U_{\mathrm{loc}}$ gives
    $q_0$: Lemma \ref{lemma: unique_minimizer_v} keeps $\delta_v$ positive off
    $x^*$, and compactness makes the far-region ratio positive.
\end{proof}

For $x\in\mathrm{int}(P)$, write the optimizer of \eqref{eq: lifted_MinCE} as
\begin{align}
    \mathcal{Q}(x)=
    \begin{bmatrix}
        G(x) & z(x)\\
        z(x)^\top & s(x)
    \end{bmatrix},
\end{align}
and the exact polarity update has the form
\begin{align*}
    \K(x):= & x+\frac{z(x)}{1-s(x)}, \\
    E(x):= & (1-s(x))
    \left(
        G(x)^{-1}
        -
        \frac{G(x)^{-1}z(x)z(x)^\top G(x)^{-1}}{\beta(x)}
    \right), \\
    \text{where}~ \beta(x) := & 1-s(x) + z(x)^\top G(x)^{-1}z(x).
\end{align*}

We next isolate a neighborhood where the inexact update is well-defined and
Lipschitz in the lifted-solution error.
\begin{lemma}\label{lemma: inexact_block_domain_lipschitz}
    Let
    $$
        \widehat{\mathcal{Q}}(x)=\begin{bmatrix}\widehat{G}(x)&\widehat{z}(x)\\\widehat{z}(x)^\top&\widehat{s}(x)\end{bmatrix}\in \S^{n+1}_{++}
    $$
    be any feasible point of \eqref{eq: lifted_MinCE}, and let $\widehat{\beta}(x)$, $\widehat{\K}(x)$, $\widehat{E}(x)$ be defined as in Lemma \ref{lemma: inexact_basic_properties}.
    Then the following claims hold.
    \begin{enumerate}
        \item The four quantities defined below are strictly positive:
        \begin{align*}
            \theta_0 := & \min_{x\in \mathcal{L}_{\leq v(x_0)+1} }
            \lambda_{\min}(\mathcal Q(x))>0,\\
            \mu_0 := & \min_{x\in \mathcal{L}_{\leq v(x_0)+1} }\lambda_{\min}(G(x)) > 0, \\
            \quad
            \sigma_0 := & \min_{x\in \mathcal{L}_{\leq v(x_0)+1} }(1-s(x))> 0 ,\\
            \quad
            \beta_0 := & \min_{x\in \mathcal{L}_{\leq v(x_0)+1} }\beta(x)> 0 .
        \end{align*}
        \item There exists $\rho_0> 0$ such that for all $x\in \mathcal{L}_{\leq v(x_0)+1}$ with
        \begin{equation}\label{eq: inexact_rho0}
            \|\widehat{\mathcal{Q}}(x)-\mathcal{Q}(x)\|_F\le \rho_0,
        \end{equation}
        one has
        \begin{align}\label{eq: inexact_block_domain_bounds}
            \lambda_{\min}(\widehat{\mathcal Q}(x))\ge \frac{\theta_0}{2},~
            \lambda_{\min}(\widehat G(x))\ge \frac{\mu_0}{2},~
            1-\widehat s(x)\ge \frac{\sigma_0}{2},~
            \widehat\beta(x)\ge \frac{\beta_0}{2}.
        \end{align}
        As a result, Lemma~\ref{lemma: inexact_basic_properties} makes
        $\widehat{\K}(x)$ and $\widehat{E}(x)$ well-defined.
        \item There exist $L_{\mathrm{ctr}} >0$ and $L_{\mathrm{vol}}>0$ such that for all $x\in \mathcal{L}_{\leq v(x_0)+1}$ with $\|\widehat{\mathcal{Q}}(x)-\mathcal{Q}(x)\|_F\le \rho_0$,
        one has
        \begin{align}
            & \|\widehat{\K}(x)-\K(x)\|
            \leq
            L_{\mathrm{ctr}}\|\widehat{\mathcal{Q}}(x)-\mathcal{Q}(x)\|_F,  \label{eq: inexact_lipschitz_constants_center}  \\
            & \left|
                \log\V(\E(\widehat{\K}(x),\widehat{E}(x)))
                -
                \log\V(\E(\K(x),E(x)))
            \right|
            \leq
            L_{\mathrm{vol}}\|\widehat{\mathcal{Q}}(x)-\mathcal{Q}(x)\|_F. \label{eq: inexact_lipschitz_constants_vol}
        \end{align}
    \end{enumerate}
\end{lemma}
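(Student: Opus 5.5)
The argument is compactness plus continuity. Lemma~\ref{lemma: inexact_sublevel_set} gives that $\mathcal{L}_{\leq v(x_0)+1}$ is compact. Lemma~\ref{lemma: lifted_MinCE_continuity} gives that $x\mapsto\mathcal Q(x)$ is continuous on $\operatorname{int}(P)$, hence so are its blocks $G,z,s$.

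\textbf{Part 1.} The four functions
\[
\lambda_{\min}(\mathcal Q(x)),\quad \lambda_{\min}(G(x)),\quad 1-s(x),\quad \beta(x)
\]
are continuous, since $\lambda_{\min}$ is $1$-Lipschitz in the Frobenius norm. Each is pointwise positive:
\begin{itemize}
\item $\mathcal Q(x)\succ0$ by Lemma~\ref{lemma: lifted_MinCE_continuity};
\item $G(x)$ is a principal block of $\mathcal Q(x)$, so $G(x)\succ0$;
\item $\mathcal Q(x)$ is feasible, so Lemma~\ref{lemma: inexact_basic_properties}, applied with $\widehat{\mathcal Q}=\mathcal Q$, gives $\beta(x)>0$ and $1-s(x)>0$.
\end{itemize}
A positive continuous function attains a positive minimum on a compact set (nonempty, since it contains $x_0$). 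This proves part 1.

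\textbf{Part 2.} Every entry of $\widehat{\mathcal Q}-\mathcal Q$, and every principal block, has norm at most $\rho:=\|\widehat{\mathcal Q}-\mathcal Q\|_F$. Weyl's inequality gives
\[
\lambda_{\min}(\widehat{\mathcal Q})\ge\theta_0-\rho,\qquad \lambda_{\min}(\widehat G)\ge\mu_0-\rho,\qquad 1-\widehat s\ge\sigma_0-\rho .
\]
For $\widehat\beta$, I bound the change in $z^\top G^{-1}z$:
\[
\bigl|\widehat z^\top\widehat G^{-1}\widehat z-z^\top G^{-1}z\bigr|\le C\rho .
\]
Here $C$ depends only on $\mu_0$ and on $M:=\max_{\mathcal L}\|\mathcal Q(x)\|_F<\infty$, which is finite by compactness. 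The bound follows from
\[
\widehat G^{-1}-G^{-1}=\widehat G^{-1}(G-\widehat G)G^{-1},\qquad \|\widehat G^{-1}\|\le 2/\mu_0 \text{ once } \rho\le\mu_0/2 .
\]
Then choose
\[
\rho_0:=\min\{\theta_0/2,\ \mu_0/2,\ \sigma_0/2,\ \beta_0/(2(1+C)),\ 1\}.
\]
This gives all four bounds in \eqref{eq: inexact_block_domain_bounds}. Well-definedness of $\widehat\K,\widehat E$ then follows as stated.

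\textbf{Part 3.} Consider the closed set
\[
\mathcal D:=\Bigl\{(G,z,s): \|(G,z,s)\|_F\le M+1,\ \lambda_{\min}(G)\ge\mu_0/2,\ 1-s\ge\sigma_0/2,\ \beta\ge\beta_0/2\Bigr\}.
\]
On $\mathcal D$ the maps $\Phi_{\mathrm{ctr}}(G,z,s)=z/(1-s)$ and $\Phi_{\mathrm{vol}}(G,z,s)$ are $C^1$. By Lemma~\ref{lemma: useful_lemma} and the volume formula, the log-volume of $\E(\K,E)$ is
\[
\Phi_{\mathrm{vol}}(G,z,s)=\tfrac12\log\det\bigl(\text{shape}\bigr)=-\tfrac12\log\det\widehat Q+\tfrac{n+1}{2}\log\frac{1}{1-\widehat\tau}.
\]
This is an explicit rational–log expression in $\log\det G$, $\log\widehat\beta$ and $\log(1-s)$. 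Since $1-\widehat\tau=(1-\widehat s)/\widehat\beta$ by part 2 of Lemma~\ref{lemma: inexact_basic_properties}, the expression is smooth where these quantities are bounded below.

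The main obstacle is that $\mathcal D$ need not be convex, because of the $\beta$ constraint, so the mean value theorem along the segment from $\mathcal Q(x)$ to $\widehat{\mathcal Q}(x)$ cannot be applied directly. I avoid this by shrinking $\rho_0$ further, so the whole segment $\mathcal Q+t(\widehat{\mathcal Q}-\mathcal Q)$ also satisfies the bounds. The part-2 argument applies verbatim to each intermediate point, since each lies within distance $t\rho\le\rho_0$ of $\mathcal Q(x)$. The segment therefore stays in $\mathcal D$.

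On the compact set $\mathcal D$ the derivatives of $\Phi_{\mathrm{ctr}}$ and $\Phi_{\mathrm{vol}}$ are bounded, by $L_{\mathrm{ctr}}$ and $L_{\mathrm{vol}}$ respectively. Integrating along the segment gives \eqref{eq: inexact_lipschitz_constants_center} and \eqref{eq: inexact_lipschitz_constants_vol}. The shift by $x$ cancels in $\widehat\K-\K$.
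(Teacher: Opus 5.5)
Your proposal is correct and follows essentially the same compactness-plus-continuity route as the paper. There are two differences. In Part~2 you are quantitative: Weyl's inequality and the resolvent identity give an explicit $\rho_0$ in terms of $\theta_0,\mu_0,\sigma_0,\beta_0$ and $M$, whereas the paper uses an abstract uniform-tube argument around the compact graph $\{(x,\mathcal Q(x))\}$. In Part~3 you check explicitly that the segment from $\mathcal Q(x)$ to $\widehat{\mathcal Q}(x)$ stays in the good region; the paper leaves this implicit, since each fiber of its tube $\mathcal T_0$ is a Frobenius ball.

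Two harmless slips. First, no further shrinking of $\rho_0$ is needed, because your own observation $\|t(\widehat{\mathcal Q}(x)-\mathcal Q(x))\|_F=t\rho\le\rho_0$ already settles it. Second, by Lemma~\ref{lemma: useful_lemma} the log-volume is $+\tfrac12\log\det\widehat Q+\tfrac{n+1}{2}\log\frac{1}{1-\widehat\tau}=\tfrac12\log\det\widehat G+\tfrac12\log\widehat\beta-\tfrac{n+1}{2}\log(1-\widehat s)$, not $-\tfrac12\log\det\widehat Q+\cdots$; this does not affect the smoothness argument.
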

\begin{proof}
    \begin{enumerate}
        \item Lemma~\ref{lemma: lifted_MinCE_continuity} gives continuity of
        $\mathcal{Q},G,z,$ and $s$. Since $G(x)\succ0$, the definition of
        $\beta$ also makes $\beta$ continuous. Compactness of
        $\mathcal{L}_{\leq v(x_0)+1}$, together with
        $\mathcal Q(x)\succ0$, $G(x)\succ0$,
        $1-s(x)>0$, and $\beta(x)>0$ from Lemma
        \ref{lemma: inexact_basic_properties}, gives
        $\theta_0,\mu_0,\sigma_0,\beta_0>0$.

        \item Define the exact graph
        \[
            \mathcal{G}_0:=\{(x,\mathcal{Q}(x)):x\in \mathcal{L}_{\leq v(x_0)+1} \}.
        \]
        This graph is compact.
        The mappings
        \[
            (x,\widehat{\mathcal{Q}})\mapsto \lambda_{\min}(\widehat{\mathcal Q}),\quad
            (x,\widehat{\mathcal{Q}})\mapsto \lambda_{\min}(\widehat{G}),\quad
            (x,\widehat{\mathcal{Q}})\mapsto 1-\widehat{s},\quad \text{and}~
            (x,\widehat{\mathcal{Q}})\mapsto \widehat{\beta}
        \]
        are continuous on $\widehat{\mathcal{Q}}\in\S^{n+1}_{++}$. On
        $\mathcal{G}_0$, they have lower bounds
        $\theta_0,\mu_0,\sigma_0,$ and $\beta_0$, respectively.
        By continuity and compactness of $\mathcal G_0$, there exists
        $\rho_0>0$ such that the claimed bounds hold throughout the entire
        ambient $\rho_0$-tube around $\mathcal G_0$ in
        $\mathcal L_{\leq v(x_0)+1}\times\S^{n+1}$. In particular,
        $\widehat{\K}(x)$ and $\widehat{E}(x)$ are defined.

        \item Define
        \[
            \mathcal{T}_0:=\left\{(x,\wh{\mathcal{Q}}):
            x\in \mathcal{L}_{\leq v(x_0)+1},\ \wh{\mathcal{Q}}\in\S^{n+1},\
            \|\wh{\mathcal{Q}} -\mathcal{Q}(x)\|_F\le \rho_0\right\}.
        \]
        Continuity of $\mathcal Q(x)$ makes $\mathcal{T}_0$ compact. The margin
        bounds in the previous part place $\mathcal{T}_0$ in the open region where
        $\wh{\mathcal{Q}} \succ0$, $\wh{G} \succ 0$, $1-\wh{s} >0$, and $\wh{\beta} >0$.
        The mappings from $(x,\wh{\mathcal{Q}})$ to $\wh{\K}(x)$ and  $\log\V(\E(\widehat{\K}(x),\widehat{E}(x)))$
        defined by the block formulas are smooth on this region. Their
        derivatives with respect to $\widehat{\mathcal{Q}}$ are therefore bounded on the
        compact $\mathcal{T}_0$. As a result, the two Lipschitz estimates \eqref{eq: inexact_lipschitz_constants_center} and \eqref{eq: inexact_lipschitz_constants_vol} hold near $(x, \mathcal{Q}(x))$.
    \end{enumerate}
\end{proof}

\begin{lemma}\label{lemma: inexact_oracle_recovery}
    Define
    \begin{align}\label{eq: inexact_lambda0}
        W(x):=
        \sum_{i=1}^m
        \begin{bmatrix}
            \tilde{a}_i(x)\\ 1
        \end{bmatrix}
        \begin{bmatrix}
            \tilde{a}_i(x)\\ 1
        \end{bmatrix}^{\top},
        ~
        \lambda_0:=\min_{x\in \mathcal{L}_{\leq v(x_0)+1}}\lambda_{\min}(W(x))>0,
    \end{align}
    and let
    \[
        B_0:=\frac{\sqrt{2}\,mL_{\mathrm{ctr}}}{\lambda_0},
        \qquad
        D_0:=\frac{\sqrt{2}\,mL_{\mathrm{vol}}}{\lambda_0}.
    \]
    Then the following claims hold.
    \begin{enumerate}
        \item For every $x\in \mathcal{L}_{\leq v(x_0)+1}$ and every feasible $\widehat{\mathcal{Q}}(x)$ of \eqref{eq: lifted_MinCE}, we have
        \[
            \lambda_{\max}(\widehat{\mathcal{Q}}(x))\le \frac{m}{\lambda_0}.
        \]
        \item If, in addition, $\widehat{\mathcal{Q}}(x)$ is $\epsilon$-optimal for \eqref{eq: lifted_MinCE}, namely,
        \[
            -\log \det \widehat{\mathcal{Q}}(x)\le -\log \det \mathcal{Q}(x)+\epsilon,
        \]
        then
        \[
            \|\widehat{\mathcal{Q}}(x)-\mathcal{Q}(x)\|_F
            \le \frac{\sqrt{2}\,m}{\lambda_0}\sqrt{\epsilon}.
        \]
        \item If $\epsilon\le \lambda_0^2\rho_0^2/(2m^2)$, then $\widehat{\K}(x)$ and $\widehat{E}(x)$ are well-defined and
        \begin{align*}
            & \|\widehat{\K}(x)-\K(x)\|
            \le B_0\sqrt{\epsilon}, \\
            & \left|
                \log\V(\E(\widehat{\K}(x),\widehat{E}(x)))
                -
                \log\V(\E(\K(x),E(x)))
            \right|
            \le D_0\sqrt{\epsilon}.
        \end{align*}
    \end{enumerate}
\end{lemma}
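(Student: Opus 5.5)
Positivity of $\lambda_0$ comes first. For every $x\in\operatorname{int}(P)$ the lifted vectors $h_i(x)=[\tilde a_i(x)^\top,1]^\top$ span $\R^{n+1}$ (as observed in the proof of Lemma~\ref{lemma: lifted_MinCE_continuity}), so $W(x)\succ0$. The map $x\mapsto\lambda_{\min}(W(x))$ is continuous, and $\mathcal L_{\le v(x_0)+1}$ is compact by Lemma~\ref{lemma: inexact_sublevel_set}. Hence the minimum is attained and is positive.

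For part 1, I would sum the feasibility constraints of \eqref{eq: lifted_MinCE}:
\[
\tr\bigl(W(x)\widehat{\mathcal Q}(x)\bigr)=\sum_i h_i(x)^\top\widehat{\mathcal Q}(x)h_i(x)\le m .
\]
Since $\widehat{\mathcal Q}(x)\succ0$ and $W(x)\succeq\lambda_0 I$, this gives $\lambda_0\tr\widehat{\mathcal Q}(x)\le m$. Then $\lambda_{\max}(\widehat{\mathcal Q}(x))\le\tr\widehat{\mathcal Q}(x)\le m/\lambda_0$. The same bound holds for the exact optimizer $\mathcal Q(x)$.

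Part 2 uses strong convexity of $f(\mathcal Q)=-\log\det\mathcal Q$ along the segment between $\mathcal Q:=\mathcal Q(x)$ and $\widehat{\mathcal Q}:=\widehat{\mathcal Q}(x)$. Both endpoints are feasible, and the feasible set is convex, so every $\mathcal Q_t=\mathcal Q+t(\widehat{\mathcal Q}-\mathcal Q)$ is feasible. By part 1 each $\mathcal Q_t$ therefore satisfies $\lambda_{\max}(\mathcal Q_t)\le M:=m/\lambda_0$. The Hessian of $f$ in direction $D$ is $\tr(\mathcal Q_t^{-1}D\mathcal Q_t^{-1}D)\ge\|D\|_F^2/M^2$. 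Optimality of $\mathcal Q$ over the convex feasible set gives the first-order condition $\langle\nabla f(\mathcal Q),\widehat{\mathcal Q}-\mathcal Q\rangle\ge0$. Taylor's formula with integral remainder then yields
\[
\epsilon\ \ge\ f(\widehat{\mathcal Q})-f(\mathcal Q)\ \ge\ \tfrac{1}{2M^2}\|\widehat{\mathcal Q}-\mathcal Q\|_F^2 .
\]
Rearranging gives $\|\widehat{\mathcal Q}-\mathcal Q\|_F\le\sqrt{2\epsilon}\,M=\frac{\sqrt2 m}{\lambda_0}\sqrt\epsilon$. The main point to check is exactly that the uniform eigenvalue bound applies along the whole segment, which is what feasibility of each $\mathcal Q_t$ secures.

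For part 3, the condition $\epsilon\le\lambda_0^2\rho_0^2/(2m^2)$ together with part 2 gives $\|\widehat{\mathcal Q}(x)-\mathcal Q(x)\|_F\le\rho_0$. Lemma~\ref{lemma: inexact_block_domain_lipschitz}(2) then makes $\widehat\K(x)$ and $\widehat E(x)$ well defined. Part 3 of that lemma bounds the center error by $L_{\mathrm{ctr}}\|\widehat{\mathcal Q}-\mathcal Q\|_F$ and the log-volume error by $L_{\mathrm{vol}}\|\widehat{\mathcal Q}-\mathcal Q\|_F$. Inserting the bound from part 2 yields the constants $B_0\sqrt\epsilon$ and $D_0\sqrt\epsilon$.
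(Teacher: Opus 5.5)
Your proposal is correct and follows the paper's proof essentially step for step: you sum the lifted constraints against $W(x)\succeq\lambda_0 I$ for part~1, use uniform strong convexity of $-\log\det$ on the feasible set together with the first-order optimality condition for part~2, and invoke Lemma~\ref{lemma: inexact_block_domain_lipschitz} for part~3. Your explicit remark that the whole segment between $\mathcal Q(x)$ and $\widehat{\mathcal Q}(x)$ is feasible, so the eigenvalue bound from part~1 controls the Hessian along it, makes precise what the paper's phrase ``uniformly strongly convex'' leaves implicit.
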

\begin{proof}
    \begin{enumerate}
        \item Full dimensionality makes the lifted vertices span $\R^{n+1}$,
        so $W(x)\succ0$. Continuity and compactness give $\lambda_0>0$. For
        feasible $\widehat{\mathcal Q}(x)$, summing the constraints gives
        \[
            \tr(W(x)\widehat{\mathcal{Q}}(x))
            =
            \sum_{i=1}^m
            \begin{bmatrix}
                \tilde{a}_i(x)\\ 1
            \end{bmatrix}^{\top}
            \widehat{\mathcal{Q}}(x)
            \begin{bmatrix}
                \tilde{a}_i(x)\\ 1
            \end{bmatrix}
            \le m.
        \]
        Since $W(x)\succeq \lambda_0 I$,
        \[
            \lambda_0\tr(\widehat{\mathcal{Q}}(x))
            \le
            \tr(W(x)\widehat{\mathcal{Q}}(x))
            \le m,
        \]
        hence
        \[
            \lambda_{\max}(\widehat{\mathcal{Q}}(x))
            \le
            \tr(\widehat{\mathcal{Q}}(x))
            \le
            \frac{m}{\lambda_0}.
        \]

        \item Let $f(\mathcal{Q}):=-\log \det \mathcal{Q}$. Its Hessian is
        \[
            \nabla^2 f(\mathcal{Q})[U,U]
            =
            \tr(\mathcal{Q}^{-1}U\mathcal{Q}^{-1}U).
        \]
        Part 1 gives $\mathcal{Q}^{-1}\succeq(\lambda_0/m)I$ for every feasible
        $\mathcal Q$, and therefore
        \[
            \nabla^2 f(\mathcal{Q})[U,U]
            \ge
            \frac{\lambda_0^2}{m^2}\|U\|_F^2.
        \]
        Thus $f$ is uniformly $\lambda_0^2/m^2$-strongly convex. Optimality of
        $\mathcal Q(x)$ gives
        \[
            \langle \nabla f(\mathcal{Q}(x)),\widehat{\mathcal{Q}}(x)-\mathcal{Q}(x)\rangle \ge 0,
        \]
        for all feasible $\widehat{\mathcal{Q}}(x)$,  and strong convexity further implies that 
        \[
            f(\widehat{\mathcal{Q}}(x))-f(\mathcal{Q}(x))
            \ge
            \frac{\lambda_0^2}{2m^2}\|\widehat{\mathcal{Q}}(x)-\mathcal{Q}(x)\|_F^2,
        \]
        which, with the assumed objective gap, proves the claim.

        \item If $\epsilon\le \lambda_0^2\rho_0^2/(2m^2)$, then the second part gives
        \[
            \|\widehat{\mathcal{Q}}(x)-\mathcal{Q}(x)\|_F
            \le
            \frac{\sqrt{2}\,m}{\lambda_0}\sqrt{\epsilon}
            \le
            \rho_0.
        \]
        Lemma \ref{lemma: inexact_block_domain_lipschitz} now proves the two bounds.
    \end{enumerate}
\end{proof}

Combining exact-map contraction with inexact-map perturbation, the next
proposition keeps the iterates in the enlarged sublevel set and gives an affine
contraction recursion for sufficiently small $\epsilon$.
\begin{proposition}\label{prop: inexact_outer_control}
    Define
    \begin{align}
        r_0 := & \operatorname{dist}(\K(\mathcal{L}_{\leq v(x_0)+1}),\partial P)>0, \\
        \mathcal{V}_0 := & \left\{y\in \R^n:\operatorname{dist}(y,\K(\mathcal{L}_{\leq v(x_0)+1}))\le \frac{r_0}{2}\right\}, \\
        \label{eq: inexact_L0}
        L_0 := & \max_{y\in \mathcal{V}_0}\|\nabla \delta_v(y)\|,\\
        C_0 := & L_0B_0.
    \end{align}
    Then the following claims hold.
    \begin{enumerate}
        \item The set $\mathcal{V}_0$ is compact and contained in $\operatorname{int}(P)$, and $0< L_0<+\infty$. 
        \item If
        \[
            \epsilon\le \frac{\lambda_0^2\rho_0^2}{2m^2}
            \qquad \text{and} \qquad
            B_0\sqrt{\epsilon}\le \frac{r_0}{2},
        \]
        then for every $x\in \mathcal{L}_{\leq v(x_0)+1}$,
        \[
            \widehat{\K}(x)\in \mathcal{V}_0
            \qquad \text{and} \qquad
            \delta_v(\widehat{\K}(x))
            \le
            q_0\delta_v(x)+C_0\sqrt{\epsilon}.
        \]
        \item If, in addition,
        \[
            C_0\sqrt{\epsilon}\le 1-q_0,
        \]
        then the iterates generated by Algorithm \ref{alg: inexact_polarity_process}
        satisfy, for all $k\geq0$,
        \[
            x_k\in \mathcal{L}_{\leq v(x_0)+1},
        \]
        and
        \[
            \delta_v(x_k)
            \le
            q_0^k\delta_v(x_0)+\frac{C_0}{1-q_0}\sqrt{\epsilon}.
        \]
    \end{enumerate}
\end{proposition}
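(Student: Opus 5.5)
We prove the three claims in order. Part 2 is a one-step perturbation bound: it combines the exact-map contraction \eqref{eq: inexact_q0} with the Lipschitz control of Lemma~\ref{lemma: inexact_oracle_recovery}. Part 3 is an induction on $k$ that uses Part 2.

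\textbf{Part 1.} By Lemma~\ref{lemma: inexact_sublevel_set}, $\mathcal{L}_{\leq v(x_0)+1}$ is compact. By Lemma~\ref{lemma: lifted_MinCE_continuity}, $\K$ is continuous on $\mathrm{int}(P)$. Hence $\K(\mathcal{L}_{\leq v(x_0)+1})$ is compact. It lies in $\mathrm{int}(P)$, since $\K(x)$ is the center of an ellipsoid with positive-definite shape inscribed in $P$. A compact subset of the open set $\mathrm{int}(P)$ has positive distance to $\partial P$, so $r_0>0$.

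The set $\mathcal V_0$ is the closed $r_0/2$-neighborhood of a compact set, so it is compact. Every point of $\mathcal V_0$ is within $r_0/2<r_0$ of a point at distance at least $r_0$ from $\partial P$. Since $P$ is convex, this places $\mathcal V_0$ in $\mathrm{int}(P)$.

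For $L_0$, note that $\nabla\delta_v=\nabla v=(n+1)c(\cdot)$ by Proposition~\ref{proposition: v_gradient}, and $c$ is continuous by Lemma~\ref{lemma: lifted_MinCE_continuity}. The maximum defining $L_0$ is therefore attained and finite. For positivity, it suffices that $c\not\equiv\mathbf 0$ on $\mathcal V_0$. Since $v$ is convex, any point with $c=\mathbf 0$ is a global minimizer of $v$, so by Lemma~\ref{lemma: unique_minimizer_v} only $x^*$ has $c=\mathbf 0$. The set $\mathcal V_0$ has nonempty interior, so it contains points other than $x^*$, and $L_0>0$.

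\textbf{Part 2.} Fix $x\in\mathcal{L}_{\leq v(x_0)+1}$. The first tolerance condition lets us apply Lemma~\ref{lemma: inexact_oracle_recovery}, part 3, which gives that $\wh\K(x)$ is well defined and
\[
\|\wh\K(x)-\K(x)\|\le B_0\sqrt\epsilon\le r_0/2.
\]
Since $\K(x)\in\K(\mathcal{L}_{\leq v(x_0)+1})$, this puts $\wh\K(x)$ in $\mathcal V_0$. The closed ball around $\K(x)$ of radius $r_0/2$ is convex and contained in $\mathcal V_0$, so the whole segment from $\K(x)$ to $\wh\K(x)$ lies in $\mathcal V_0$. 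The mean value theorem, applied to the differentiable function $\delta_v$ along this segment, gives
\[
\delta_v(\wh\K(x))\le\delta_v(\K(x))+L_0B_0\sqrt\epsilon.
\]
Combining this with \eqref{eq: inexact_q0} yields $\delta_v(\wh\K(x))\le q_0\delta_v(x)+C_0\sqrt\epsilon$.

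\textbf{Part 3.} We induct on $k$, carrying two facts: $x_k\in\mathcal{L}_{\leq v(x_0)+1}$ and the affine bound on $\delta_v(x_k)$. The base case $k=0$ is trivial.

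For the step, suppose $x_k\in\mathcal{L}_{\leq v(x_0)+1}$. Part 2 gives $x_{k+1}=\wh\K(x_k)\in\mathcal V_0\subseteq\mathrm{int}(P)$ and $\delta_v(x_{k+1})\le q_0\delta_v(x_k)+C_0\sqrt\epsilon$. Unrolling this recursion and summing the geometric series gives the displayed bound.

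The delicate point is showing that $x_{k+1}$ returns to the enlarged sublevel set. The affine bound alone cannot do this, because $C_0\sqrt\epsilon/(1-q_0)$ is not small on its own. Instead we use the one-step inequality directly, together with $\delta_v(x_k)\le\delta_v(x_0)+1$ and $C_0\sqrt\epsilon\le 1-q_0$:
\[
\delta_v(x_{k+1})\le q_0(\delta_v(x_0)+1)+(1-q_0)\le\delta_v(x_0)+1,
\]
where the last step uses $q_0\delta_v(x_0)\le\delta_v(x_0)$, valid because $\delta_v(x_0)\ge0$. Since $\delta_v=v-v(x^*)$, this says $v(x_{k+1})\le v(x_0)+1$, so $x_{k+1}\in\mathcal{L}_{\leq v(x_0)+1}$. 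This closes the induction.

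I expect this invariance step to be the main obstacle: the condition $C_0\sqrt\epsilon\le1-q_0$ is exactly what makes the unit slack in the enlarged sublevel set absorb the oracle error at every iteration.
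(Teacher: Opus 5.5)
Your proof is correct and follows the paper's argument essentially step for step. Part 1 uses compactness and continuity of $\K$; Part 2 uses the $r_0/2$-tube together with the gradient bound $L_0$ and the exact contraction $q_0$; Part 3 obtains invariance from the one-step estimate $q_0(\delta_v(x_0)+1)+(1-q_0)\le\delta_v(x_0)+1$. One aside is inaccurate but harmless: the hypothesis $C_0\sqrt{\epsilon}\le 1-q_0$ gives $C_0\sqrt{\epsilon}/(1-q_0)\le 1$. So, within the same induction, the unrolled affine bound $\delta_v(x_{k+1})\le q_0^{k+1}\delta_v(x_0)+1\le\delta_v(x_0)+1$ would also yield invariance, contrary to your remark that it cannot.
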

\begin{proof}
    \begin{enumerate}
        \item Lemmas \ref{lemma: lifted_MinCE_continuity} and
        \ref{lemma: inexact_sublevel_set} make
        $\K(\mathcal{L}_{\leq v(x_0)+1})$ compact. Lemma
        \ref{lemma: inexact_basic_properties}, applied to the exact optimizer
        $\mathcal Q(x)$, gives a full-dimensional ellipsoid centered at
        $\K(x)$ and contained in $P$; hence $\K(x)\in\operatorname{int}(P)$.
        Therefore the image is a compact subset of $\operatorname{int}(P)$,
        so $r_0>0$ and $\mathcal V_0$ is compact in
        $\operatorname{int}(P)$. Proposition
        \ref{proposition: v_gradient} and Lemma
        \ref{lemma: lifted_MinCE_continuity} make
        $\nabla\delta_v=(n+1)c$ continuous on $\mathcal V_0$, so
        $L_0<\infty$.

        Lemma \ref{lemma: unique_minimizer_v} places $x^*$ in
        $\mathcal{L}_{\leq v(x_0)+1}$, while Lemma
        \ref{lemma: facts_about_g} and
        \eqref{eq: self_contained_K_formula} give $\K(x^*)=x^*$.
        Thus $\mathcal V_0$ contains a nontrivial ball about $x^*$. If
        $y\ne x^*$ and $c(y)=\mathbf0$, Proposition
        \ref{proposition: v_gradient} and convexity make $y$ a global
        minimizer of $v$, contradicting Lemma
        \ref{lemma: unique_minimizer_v}. Therefore
        $\nabla\delta_v(y)=(n+1)c(y)\ne\mathbf0$ there, and $L_0>0$.

        \item Let $x\in \mathcal{L}_{\leq v(x_0)+1}$. By Lemma \ref{lemma: inexact_oracle_recovery} we have
        \[
            \|\widehat{\K}(x)-\K(x)\|\le B_0\sqrt{\epsilon}\le \frac{r_0}{2}.
        \]
        Every point on the segment joining $\K(x)$ and $\widehat\K(x)$ is
        within $r_0/2$ of $\K(x)\in\K(\mathcal L_{\leq v(x_0)+1})$, so the
        entire segment lies in $\mathcal V_0$. The definition of $L_0$ gives
        \[
            \delta_v(\widehat{\K}(x))
            \le
            \delta_v(\K(x))+L_0\|\widehat{\K}(x)-\K(x)\|.
        \]
        Lemmas \ref{lemma: inexact_sublevel_set} and
        \ref{lemma: inexact_oracle_recovery} then yield
        \[
            \delta_v(\widehat{\K}(x))
            \le
            q_0\delta_v(x)+L_0B_0\sqrt{\epsilon}
            =
            q_0\delta_v(x)+C_0\sqrt{\epsilon}.
        \]

        \item Let $x_0\in\mathcal{L}_{\leq v(x_0)+1}$, part (2)
        applied inductively for $0\leq k<N$ gives
        \begin{align*}
            \delta_v(x_{k+1}) &\le q_0\delta_v(x_k)+C_0\sqrt{\epsilon} \\
            &\le q_0(\delta_v(x_0)+1)+(1-q_0) \\
            &=q_0\delta_v(x_0)+1
            \le\delta_v(x_0)+1.
        \end{align*}
        so every produced iterate remains in the enlarged sublevel set. Part
        2 then applies for $0\leq k<N$ where $N\in \N$: 
        \[
            \delta_v(x_{k+1})
            \le
            q_0\delta_v(x_k)+C_0\sqrt{\epsilon}, \qquad 0\leq k<N.
        \]
        Iterating gives
        \[
            \delta_v(x_k)
            \le
            q_0^k\delta_v(x_0)+\frac{C_0}{1-q_0}\sqrt{\epsilon},
            \qquad 0\leq k\leq N.
        \]
    \end{enumerate}
    This completes the proof. 
\end{proof}

The next theorem uses the preceding invariance and error bounds to choose a fixed
tolerance $\epsilon$ and iteration bound sufficient for approximation factor
$\gamma$.
\begin{theorem}\label{thm: inexact_fixed_tolerance}
    Choose $x_0\in \mathrm{int}(P)$,  fix $\gamma\in (0,1)$, and let
    \[
        \epsilon
        :=
        \min\left\{
            \frac{\lambda_0^2\rho_0^2}{2m^2},
            \frac{\lambda_0^2r_0^2}{8m^2L_{\mathrm{ctr}}^2},
            \frac{\lambda_0^2(1-q_0)^2}{2m^2L_0^2L_{\mathrm{ctr}}^2},
            \frac{\lambda_0^2(1-q_0)^2 \log(1/\gamma)^2}{32m^2L_0^2L_{\mathrm{ctr}}^2},
            \frac{\lambda_0^2\log(1/\gamma)^2}{32m^2L_{\mathrm{vol}}^2}
        \right\}.
    \]
    Here $q_0$, $\rho_0$, and $\lambda_0$ are as in \eqref{eq: inexact_q0}, \eqref{eq: inexact_rho0}, and \eqref{eq: inexact_lambda0}, respectively;
    $L_{\mathrm{ctr}}$ and $L_{\mathrm{vol}}$ are as in \eqref{eq: inexact_lipschitz_constants_center} and \eqref{eq: inexact_lipschitz_constants_vol}; and $L_0$ is as in \eqref{eq: inexact_L0}.
    Then Algorithm \ref{alg: inexact_polarity_process} is well-defined, and the ellipsoid
    $\E(x_{N_\gamma+1},E_{N_\gamma+1})$ is $\gamma$-maximal with
    \[
        N_\gamma
        :=
        \left\lceil
            \frac{\log^+(2\delta_v(x_0)/\log(1/\gamma))}{\log(1/q_0)}
        \right\rceil.
    \]
\end{theorem}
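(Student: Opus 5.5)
The argument verifies in turn that the chosen $\epsilon$ satisfies every hypothesis of Proposition~\ref{prop: inexact_outer_control}. It then splits the log-volume gap of $\E(x_{N_\gamma+1},E_{N_\gamma+1})$ into an exact part and an oracle-error part, and shows that each is at most $\tfrac12\log(1/\gamma)$.

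\textbf{Step 1: check the hypotheses of Proposition~\ref{prop: inexact_outer_control}.}
\begin{itemize}
\item The first entry of the minimum gives $\epsilon\le\lambda_0^2\rho_0^2/(2m^2)$. Lemma~\ref{lemma: inexact_oracle_recovery} then applies at every point of $\mathcal L_{\le v(x_0)+1}$, so $\widehat\K$ and $\widehat E$ are well-defined there.
\item Since $B_0=\sqrt2\,mL_{\mathrm{ctr}}/\lambda_0$, the second entry is exactly $B_0\sqrt\epsilon\le r_0/2$.
\item Since $C_0=L_0B_0=\sqrt2\,mL_0L_{\mathrm{ctr}}/\lambda_0$, the third entry is exactly $C_0\sqrt\epsilon\le 1-q_0$.
\end{itemize}
Part~3 of Proposition~\ref{prop: inexact_outer_control} therefore keeps every iterate in $\mathcal L_{\le v(x_0)+1}$. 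This is what makes Algorithm~\ref{alg: inexact_polarity_process} well-defined at every step. It also gives
\[
\delta_v(x_k)\le q_0^k\delta_v(x_0)+\frac{C_0\sqrt\epsilon}{1-q_0}.
\]

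\textbf{Step 2: decompose the final gap.} Fix $k=N_\gamma$ and set $x=x_{N_\gamma}\in\mathcal L_{\le v(x_0)+1}$. Write
\[
\log\frac{\V(\MaxIE(P))}{\V(\E(\widehat\K(x),\widehat E(x)))}
=\log\frac{\V(\MaxIE(P))}{\V(\E(\K(x),E(x)))}
+\Bigl[\log\V(\E(\K(x),E(x)))-\log\V(\E(\widehat\K(x),\widehat E(x)))\Bigr].
\]

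\textbf{Step 3: bound each term.}
\begin{itemize}
\item \emph{Exact term.} The computation at the end of the proof of Theorem~\ref{thm: polarity_process_convergence_rate}, using Lemmas~\ref{lemma: useful_lemma} and~\ref{lemma: facts_about_g}, bounds it by $\delta_v(x)$.
\item \emph{Error term.} By part~3 of Lemma~\ref{lemma: inexact_oracle_recovery} it is at most $D_0\sqrt\epsilon=\sqrt2\,mL_{\mathrm{vol}}\sqrt\epsilon/\lambda_0$. The fifth entry of the minimum makes this at most $\tfrac14\log(1/\gamma)$.
\item \emph{Bounding $\delta_v(x)$.} The fourth entry gives $C_0\sqrt\epsilon/(1-q_0)\le\tfrac14\log(1/\gamma)$. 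The choice of $N_\gamma$ gives $q_0^{N_\gamma}\delta_v(x_0)\le\tfrac12\log(1/\gamma)$; the case $\log^+=0$ is automatic, since then $\delta_v(x_0)\le\tfrac12\log(1/\gamma)$ already. Hence $\delta_v(x)\le\tfrac34\log(1/\gamma)$.
\end{itemize}
Summing the two terms, the total gap is at most $\log(1/\gamma)$, which is exactly $\gamma$-maximality.

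The step needing most care is the bookkeeping of constants. Each entry of the minimum must be matched against the corresponding expression in $B_0$, $C_0$ and $D_0$ from Lemma~\ref{lemma: inexact_oracle_recovery}. One must also confirm that the exact-step identity from Theorem~\ref{thm: polarity_process_convergence_rate} applies at an inexact iterate $x_{N_\gamma}$. It does, because that identity only uses $x\in\operatorname{int}(P)$ together with the exact map $\K$, $E$ evaluated at $x$.
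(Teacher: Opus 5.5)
Your proposal is correct and follows essentially the same route as the paper. You match the five entries of the minimum to $\epsilon\le\lambda_0^2\rho_0^2/(2m^2)$, $B_0\sqrt\epsilon\le r_0/2$, $C_0\sqrt\epsilon\le 1-q_0$, $C_0\sqrt\epsilon/(1-q_0)\le\frac14\log(1/\gamma)$ and $D_0\sqrt\epsilon\le\frac14\log(1/\gamma)$, invoke Proposition~\ref{prop: inexact_outer_control} for invariance and the affine recursion, and split the final log-volume gap into the exact-step part (at most $\delta_v(x_{N_\gamma})$) plus the oracle error; the only cosmetic difference is that the paper first states the per-step bound for every $0\le k\le N_\gamma$ and then specializes to $k=N_\gamma$, whereas you go directly to the last step.
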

\begin{proof}
    Let $L_\gamma:=\log(1/\gamma)$. The first bound on $\epsilon$ gives $\epsilon \le \frac{\lambda_0^2\rho_0^2}{2m^2}.$
    The second and third bounds, with $B_0$ and $C_0$ in Lemma \ref{lemma: inexact_oracle_recovery} and Proposition \ref{prop: inexact_outer_control}, give
    \[
        B_0\sqrt{\epsilon}\le \frac{r_0}{2},
        \quad
        C_0\sqrt{\epsilon}\le 1-q_0.
    \]
    The fourth and fifth bounds, with $D_0$ in Lemma \ref{lemma: inexact_oracle_recovery}, similarly give
    \[
        \frac{C_0}{1-q_0}\sqrt{\epsilon}\le \frac{L_\gamma}{4},
        \quad
        D_0\sqrt{\epsilon}\le \frac{L_\gamma}{4}.
    \]
    Lemma \ref{lemma: inexact_oracle_recovery}  and Proposition \ref{prop: inexact_outer_control} make the algorithm
    well-defined, and give
    \[
        x_k\in \mathcal{L}_{\leq v(x_0)+1},
        \quad
        \delta_v(x_k)
        \le
        q_0^k\delta_v(x_0)+\frac{L_\gamma}{4},
        \quad 0\leq k\leq N_\gamma+1.
    \]

    For each $0\leq k\leq N_\gamma$, the exact primal update at $x_k$ satisfies
    \[
        \log \frac{\V(\MaxIE(P))}{\V(\E(\K(x_k),E(x_k)))}
        =
        \delta_v(x_k)-\frac{n+1}{2}\log \frac{1}{1-\tau(x_k)}
        \le
        \delta_v(x_k),
    \]
    where Lemmas \ref{lemma: useful_lemma} and
    \ref{lemma: facts_about_g}, together with the definition of
    $\delta_v$, are used.
    Lemma \ref{lemma: inexact_oracle_recovery} then yields
    \begin{align*}
        &\log \frac{\V(\MaxIE(P))}{\V(\E(x_{k+1},E_{k+1}))} \\
        = & \log \frac{\V(\MaxIE(P))}{\V(\E(\K(x_k),E(x_k)))}  + \log \frac{\V(\E(\K(x_k),E(x_k)))}{\V(\E(x_{k+1},E_{k+1}))} \\
        \quad\le & \delta_v(x_k) +
        \left|
            \log\V(\E(x_{k+1},E_{k+1}))
            -
            \log\V(\E(\K(x_k),E(x_k)))
        \right| \\
        \le & \delta_v(x_k)+D_0\sqrt{\epsilon} \le q_0^k\delta_v(x_0)+\frac{L_\gamma}{2}.
    \end{align*}
    At $k=N_\gamma$, its definition gives $ q_0^{N_\gamma}\delta_v(x_0)\le \frac{L_\gamma}{2}$, and hence
    \[
        \log \frac{\V(\MaxIE(P))}{\V(\E(x_{N_\gamma+1},E_{N_\gamma+1}))}
        \le
        L_\gamma
        =
        \log(1/\gamma);
    \]
    equivalently, we have 
    \[
        \V(\E(x_{N_\gamma+1},E_{N_\gamma+1}))
        \ge
        \gamma\,\V(\MaxIE(P)).
    \]
    This completes the proof.
\end{proof}
We acknowledge that the sufficient oracle tolerance is nonconstructive because its constants depend
on compact initial-sublevel geometry.

\subsection{Arithmetic counts based on inexact MinCE oracles}\label{subsec: arith}
We use existing MinCE solvers to derive conditional arithmetic estimates.
\subsubsection{Barycentric coordinate descent and path-following Newton method}\label{subsec: arith_khachiyan}
Khachiyan \cite{khachiyan1996rounding} gave complexities for two
inexact MinCE oracles. We first translate his notion of inexactness into our Definition \ref{def: inexact oracle}.

\begin{lemma}\label{lemma: lowner_to_obj_gap}
    Given $\epsilon >0$, let
    \begin{align}\label{eq: eta}
        \eta := \min \left \{1, \exp\left(\frac{\epsilon}{2}\right) - 1 \right \}.
    \end{align}
    Let $x \in \mathrm{int}(P)$. If
    \begin{align}\label{eq: lowner_problem}
        \Vol (\E(\mathbf{0}, \wh{\mathcal{Q}}(x)) )\leq (1  + \eta)  \Vol (\E(\mathbf{0}, \mathcal{Q}(x))),
    \end{align}
    then
     \[
        -\log \det \wh{\mathcal{Q}}(x)  \leq -\log \det \mathcal{Q}(x) + \epsilon.
    \]
\end{lemma}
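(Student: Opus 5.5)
This is a one-line volume computation. The lifted ellipsoid $\E(\mathbf 0,\mathcal Q)$ lives in $\R^{n+1}$, so the normalized volume formula gives
\[
\Vol(\E(\mathbf{0},\mathcal Q))=(\det\mathcal Q)^{-1/2}.
\]
The plan has three steps.

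\textbf{Step 1: translate the hypothesis into a log-determinant gap.} Substituting the volume formula into \eqref{eq: lowner_problem} gives $(\det\wh{\mathcal Q}(x))^{-1/2}\le(1+\eta)(\det\mathcal Q(x))^{-1/2}$. Taking logarithms and multiplying by $2$ yields
\[
-\log\det\wh{\mathcal Q}(x)\le-\log\det\mathcal Q(x)+2\log(1+\eta).
\]

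\textbf{Step 2: bound $2\log(1+\eta)$ by $\epsilon$.} By \eqref{eq: eta}, $\eta\le \exp(\epsilon/2)-1$. Since $\log$ is increasing, $2\log(1+\eta)\le 2\cdot\epsilon/2=\epsilon$. The cap $\eta\le1$ only makes $\eta$ smaller, so it preserves the inequality; presumably it exists only for the later complexity bounds.

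\textbf{Step 3: check the well-definedness points.} There is no real obstacle; the only care needed is here. First, $\wh{\mathcal Q}(x)\in\S^{n+1}_{++}$, because it is assumed to be a feasible point as in Definition~\ref{def: inexact oracle}, so its determinant and the logarithms are well defined. Second, the volume here is the $(n+1)$-dimensional normalized volume with the exponent $-1/2$, not an $n$-dimensional section volume. Any dimension-dependent constant $\mu_{n+1}$ would cancel from both sides in any case, so the argument would not change even with unnormalized volumes.
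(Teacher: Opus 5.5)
Your proposal is correct and matches the paper's proof: both convert the volume hypothesis into the log-determinant gap $2\log(1+\eta)$ via $\Vol(\E(\mathbf 0,\mathcal Q))=(\det\mathcal Q)^{-1/2}$, then use $1+\eta\le \exp(\epsilon/2)$ to bound that gap by $\epsilon$. Your well-definedness remarks (positive definiteness of $\wh{\mathcal Q}(x)$, and the cap $\eta\le 1$ being harmless) are accurate.
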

\begin{proof}
Equations \eqref{eq: lowner_problem} and \eqref{eq: eta} give
\begin{align*}
    -\log \det \wh{\mathcal{Q}}(x) \leq -\log \det \mathcal{Q}(x) + 2\log(1 +\eta)\leq -\log \det \mathcal{Q}(x) + \epsilon.
\end{align*}
\end{proof}
The condition $\eta\leq1$ in \eqref{eq: eta} makes Khachiyan's results in
\cite{khachiyan1996rounding} applicable, so any feasible solution of \eqref{eq: lifted_MinCE} satisfying
\eqref{eq: lowner_problem} is an adequate oracle. He proposed solving this
L\"owner problem through an equivalent determinant-maximization problem over
$\Delta_{2m}$. Let
\[
    \mathcal{P}(x) = \{p_1(x),\cdots, p_{2m}(x)\}
\]
be an enumeration of the set
\[
    \left\{\pm \begin{bmatrix} \tilde{a}_1(x) \\1 \end{bmatrix}, \cdots, \pm \begin{bmatrix} \tilde{a}_m(x) \\1 \end{bmatrix} \right\}.
\]
Then \eqref{eq: dual_MinCE} is equivalent to
\begin{align}\label{eq: equivalent_dual_MinCE}
    \max_{y \in \Delta_{2m}} \quad \log \det  \sum_{j=1}^{2m} y_j p_j(x) p_j(x)^\top.
\end{align}
Khachiyan gives both barycentric coordinate descent (BCD), with primal recovery
satisfying \eqref{eq: lowner_problem}, and a path-following Newton bound
\cite{nesterov1989self,nesterov1994interior}.

\begin{theorem}{\cite[Theorems 3 and 4]{khachiyan1996rounding}}\label{thm: khachiyan}
    A feasible solution $\wh{\mathcal{Q}}(x) \in \S^{n+1}_{++}$ of \eqref{eq: lifted_MinCE} satisfying \eqref{eq: lowner_problem} can be obtained by solving the equivalent dual MinCE
    problem \eqref{eq: equivalent_dual_MinCE} with either
    \begin{align}
        \mathcal{O}\left(mn^2 \left(\frac{n}{\eta} + \log \log m \right) \right)
    \end{align}
    arithmetic operations by the barycentric coordinate descent (BCD) method, or
     \begin{align}
        \mathcal{O}\left( m^{3.5} \log \left(\frac{m}{\eta}\right)\right)
    \end{align}
    arithmetic operations using pre-rounding and path-following Newton method.
\end{theorem}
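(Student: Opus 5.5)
The plan is to reduce the claim to the setting of \cite{khachiyan1996rounding}, which treats the L\"owner (minimum-volume, origin-centered) ellipsoid of a centrally symmetric point set. The accuracy notion there is a volume ratio $1+\eta$ with $\eta\le1$, which is exactly \eqref{eq: lowner_problem}. The proof then has three parts: establish the equivalence of problems, check how a dual iterate is recovered as a primal feasible point, and translate the operation counts.

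\textbf{Equivalence of problems.} First I would show that \eqref{eq: lifted_MinCE} is exactly the centered L\"owner problem for the symmetric set $\conv\mathcal P(x)\subset\R^{n+1}$. A centered ellipsoid $\E(\mathbf0,\mathcal Q)$ contains $p_j$ if and only if it contains $-p_j$, since $p^\top\mathcal Qp=(-p)^\top\mathcal Q(-p)$. Hence the constraint sets coincide. By symmetry of the point set and uniqueness of the L\"owner ellipsoid, the optimal covering ellipsoid of $\conv\mathcal P(x)$ is origin-centered, so no generality is lost by imposing the center at the origin.

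I would then identify \eqref{eq: equivalent_dual_MinCE} with the Lagrange dual of this centered problem. Setting $\lambda_i=y_i+y_{i+m}$ gives $\sum_j y_jp_jp_j^\top=Y(x)\Diag(\lambda)Y(x)^\top$, which is the matrix in \eqref{eq: dual_MinCE}. Its determinant equals $\det S(x,\lambda)$ by the Schur-complement computation in Lemma~\ref{lemma: convexity_of_F}, so the optimal values agree. The full-dimensionality of $(P-x)^\circ$ guarantees that the $p_j$ span $\R^{n+1}$, which is Khachiyan's standing assumption.

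\textbf{Primal recovery.} Next I would recall Khachiyan's recovery map. For a dual iterate $y\in\Delta_{2m}$ with $M(y):=\sum_jy_jp_jp_j^\top\succ0$, set
\[
\wh{\mathcal Q}(x):=\frac{1}{(n+1)(1+\varepsilon_y)}M(y)^{-1},
\qquad
1+\varepsilon_y:=\max_j\frac{p_j^\top M(y)^{-1}p_j}{n+1}.
\]
This matrix is feasible for \eqref{eq: lifted_MinCE} by construction. Khachiyan's duality-gap estimate bounds the volume ratio against the optimum by $(1+\varepsilon_y)^{(n+1)/2}$. So driving $\varepsilon_y$ below roughly $\eta/(n+1)$ yields \eqref{eq: lowner_problem}. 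This is precisely the stopping criterion in his Theorems~3 and~4, so I would invoke them directly with ambient dimension $n+1$ and $2m$ points.

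\textbf{Operation counts.} For barycentric coordinate descent, Khachiyan's count is $\mathcal O(Nd^2(d/\eta+\log\log N))$ for $N$ points in $\R^d$. Substituting $N=2m$ and $d=n+1$ gives the stated $\mathcal O(mn^2(n/\eta+\log\log m))$. For pre-rounding followed by path-following Newton, his count is $\mathcal O(N^{3.5}\log(N/\eta))$, which becomes $\mathcal O(m^{3.5}\log(m/\eta))$. Forming the $\tilde a_i(x)$ and $p_j(x)$ costs only $\mathcal O(mn)$, which is absorbed in both bounds.

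\textbf{Main obstacle.} I expect the delicate step to be matching the accuracy conventions exactly. One must check that Khachiyan's ``$(1+\eta)$-approximate L\"owner ellipsoid'' refers to the same normalized volume $\det(\cdot)^{-1/2}$ of $\E(\mathbf0,\wh{\mathcal Q})$ used in \eqref{eq: lowner_problem}. One must also check that his returned ellipsoid is genuinely feasible, rather than feasible only after scaling by $1+\eta$. I would verify both points against his normalization. If his guarantee is stated as inclusion up to a factor $\sqrt{1+\eta}$, I would rescale $\wh{\mathcal Q}$ by a constant factor, which changes $\eta$ only by a constant multiple and leaves the $\mathcal O$-bounds unaffected.
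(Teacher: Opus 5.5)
Your proposal is correct and follows the paper's route. The paper gives no independent proof: it only rewrites the dual over $\Delta_{2m}$ using the $2m$ symmetric points $\pm[\tilde a_i(x)^\top,1]^\top$ and then cites Khachiyan's Theorems~3 and~4. Your problem equivalence, your recovery bound $(1+\varepsilon_y)^{(n+1)/2}$ with $\varepsilon_y\lesssim\eta/(n+1)$, and your substitution $N=2m$, $d=n+1$ just make that citation explicit.

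One small correction concerns your fallback remark. A linear dilation by $\sqrt{1+\eta}$ in $\R^{n+1}$ multiplies the volume by $(1+\eta)^{(n+1)/2}$, so it costs a factor $\Theta(n)$ in $\eta$, not a constant. That would worsen the BCD term from $n/\eta$ to $n^2/\eta$, although the logarithmic Newton bound would be unaffected. Your main recovery argument already does this conversion correctly, so you do not need the fallback.
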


Theorems \ref{thm: inexact_fixed_tolerance} and \ref{thm: khachiyan} yield
bounds with instance-dependent contraction, compactness, and Lipschitz
constants not uniform in $m,n,$ and $R$.
\begin{theorem}\label{thm: overall_arithmetic_complexity}
    Let $x_0\in \mathrm{int}(P)$ and $\gamma\in (0,1)$. Let $\epsilon$ and $N_\gamma$ be as in Theorem \ref{thm: inexact_fixed_tolerance}, and let $\eta$ be as in \eqref{eq: eta}.
    If this instance-dependent tolerance and budget are supplied, the polarity
    process produces a $\gamma$-maximal ellipsoid of $P$ using
    \begin{align*}
        \mathcal{T}_{\text{PP-BCD}} =
        \mathcal{O}\left(
             (N_\gamma+1)\, mn^2
            \left(
                \frac{n}{\eta} + \log\log m
            \right)
        \right)
    \end{align*}
    arithmetic operations when BCD is used to solve each dual MinCE problem \eqref{eq: equivalent_dual_MinCE}, or at most
    \begin{align*}
        \mathcal{T}_{\text{PP-IPM}} =  \mathcal{O}\left(
            (N_\gamma+1)\, m^{3.5}\log\left(\frac{m}{\eta}\right)
        \right)
        \label{eq: overall_arithmetic_complexity_newton}
    \end{align*}
    arithmetic operations when each dual MinCE problem \eqref{eq: equivalent_dual_MinCE} is solved by a pre-rounding procedure followed by the path-following Newton method.
\end{theorem}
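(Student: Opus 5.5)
The plan is to combine Theorem~\ref{thm: inexact_fixed_tolerance} for the outer iteration count with Theorem~\ref{thm: khachiyan} for the per-iteration cost. First, fix $\epsilon$ and $N_\gamma$ as in Theorem~\ref{thm: inexact_fixed_tolerance}, and define $\eta$ by \eqref{eq: eta}; since $\epsilon>0$, we have $\eta\in(0,1]$. At every outer iteration $k=0,\ldots,N_\gamma$, call an oracle that returns a feasible $\wh{\mathcal Q}(x_k)$ of \eqref{eq: lifted_MinCE} satisfying the L\"owner-type condition \eqref{eq: lowner_problem} with this $\eta$. Lemma~\ref{lemma: lowner_to_obj_gap} then shows that $\wh{\mathcal Q}(x_k)$ is an inexact MinCE oracle output of tolerance $\epsilon$ in the sense of Definition~\ref{def: inexact oracle}. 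Hence Algorithm~\ref{alg: inexact_polarity_process} runs with a valid $\epsilon$-oracle at every step.

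Second, apply Theorem~\ref{thm: inexact_fixed_tolerance}. It guarantees that all iterates stay in $\mathcal L_{\leq v(x_0)+1}\subseteq\mathrm{int}(P)$, which makes each subproblem well-posed. It also shows that the ellipsoid $\E(x_{N_\gamma+1},E_{N_\gamma+1})$ is $\gamma$-maximal. That ellipsoid is produced from exactly $N_\gamma+1$ oracle calls, at $x_0,\ldots,x_{N_\gamma}$.

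Third, account for the cost of each call. Forming the lifted points $p_j(x_k)$ from $x_k$ takes $\mathcal O(mn)$ operations, and the update formulas of Algorithm~\ref{alg: inexact_polarity_process} take $\mathcal O(n^3)$ operations. Both are dominated by the oracle bounds $mn^2(n/\eta+\log\log m)$ and $m^{3.5}\log(m/\eta)$, since $m\ge n+1$. By Theorem~\ref{thm: khachiyan}, each call costs $\mathcal O(mn^2(n/\eta+\log\log m))$ operations with BCD, or $\mathcal O(m^{3.5}\log(m/\eta))$ operations with pre-rounding plus path-following Newton. Multiplying by $N_\gamma+1$ gives $\mathcal T_{\text{PP-BCD}}$ and $\mathcal T_{\text{PP-IPM}}$.

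The main obstacle is checking that Khachiyan's output really is a feasible point of \eqref{eq: lifted_MinCE} at the current $x_k$ with the stated volume guarantee relative to the lifted optimum $\mathcal Q(x_k)$. This requires the equivalence between \eqref{eq: dual_MinCE} and \eqref{eq: equivalent_dual_MinCE}, together with the primal recovery, and Theorem~\ref{thm: khachiyan} supplies both, so it can be cited directly. The constraint $\eta\le1$ required by that theorem is built into \eqref{eq: eta}.
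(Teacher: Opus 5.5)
Your proposal is correct and follows the paper's argument. The paper's proof is one line: it combines Theorem~\ref{thm: inexact_fixed_tolerance}, which gives $N_\gamma+1$ outer calls, with Theorem~\ref{thm: khachiyan}, which gives the per-call cost, and relies on Lemma~\ref{lemma: lowner_to_obj_gap} to turn Khachiyan's $(1+\eta)$ volume guarantee into the $\epsilon$-oracle of Definition~\ref{def: inexact oracle}. Your additional checks ($\eta\in(0,1]$, iterates staying in $\mathrm{int}(P)$, and the $\mathcal O(mn)$ and $\mathcal O(n^3)$ overheads being dominated) only make explicit what the paper leaves implicit.
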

\begin{proof}
    The proof follows immediately from Theorems \ref{thm: inexact_fixed_tolerance} and \ref{thm: khachiyan}.
\end{proof}

To compare our overall bounds with the direct MaxIE bound of Khachiyan and Todd
\cite{khachiyan1990complexity}, write $L_\gamma:=\log(1/\gamma)$.
For the following simplified comparison, take $x_0=\mathbf0$ under the centered
ball sandwich and hold the outer geometric constants fixed: $q_0$ stays away
from one; $\lambda_0$, $\rho_0$, and $r_0$ stay away from zero; and $L_0$,
$L_{\mathrm{ctr}}$, and $L_{\mathrm{vol}}$ stay bounded. Then, as
$L_\gamma\downarrow0$, the tolerance in Theorem
\ref{thm: inexact_fixed_tolerance} satisfies $ \epsilon=\Theta(L_\gamma^2/m^2).$
Since $e^t=1+t+\mathcal O(t^2)$ for small $t$, equation~\eqref{eq: eta} gives $\eta = \Theta(L_\gamma^2/m^2)$
as well. 
Invoking Corollary
\ref{corollary: bounded radius ratio} and Theorem
\ref{thm: inexact_fixed_tolerance}, we see
\[
    N_\gamma+1
    =\mathcal O\!\left(1+\log^+\!\frac{2n\log R}{L_\gamma}\right),
\]
where $R\geq1$ is the ratio of the radii of the outer and inner Euclidean balls.
Substituting these high-accuracy quantities into
Theorem~\ref{thm: overall_arithmetic_complexity} gives
\[
    \mathcal{T}_{\text{PP-BCD}}
    =\mathcal O\!\left(
      \left(\frac{m^3n^3}{L_\gamma^2}+mn^2\log\log m\right)
      \left[1+\log^+\!\frac{2n\log R}{L_\gamma}\right]
    \right)
\]
and
\[
    \mathcal{T}_{\text{PP-IPM}}
    =\mathcal O\!\left(
      m^{3.5}\log\!\left(\frac{m^3}{L_\gamma^2}\right)
      \left[1+\log^+\!\frac{2n\log R}{L_\gamma}\right]
    \right).
\]
Here we assume $\gamma$ is big enough so that the value of $\log(\cdot)$ is positive. 

Khachiyan and Todd \cite{khachiyan1990complexity} gave this uniform worst-case
bound for \texttt{Ellip}:
\[
    \mathcal O\!\left(
      m^{3.5}
      \log\left(\frac{mR}{L_\gamma}\right)
      \left[1+\log^+\!\frac{n\log R}{L_\gamma}\right]
    \right).
\]
Anstreicher \cite{anstreicher2002improved} later obtained the direct MaxIE bound
\[
    \mathcal O\!\left(
      m^{3.5}\log\left(\frac{mR}{L_\gamma}\right)
    \right),
\]
where his $e^{-\varepsilon}$-accuracy notation is translated by setting
$\varepsilon=L_\gamma$. Thus Anstreicher removes the additional logarithmic
factor appearing in the Khachiyan--Todd bound. Zhang and Gao
\cite{zhanggao2003mvie} take an implementation-oriented primal--dual
interior-point approach: each dense Newton step costs $\mathcal O(m^3)$ because
it uses dense $m$-dimensional linear algebra. They compare per-iteration cost
and observed iteration counts, not a directly comparable global
$\gamma$-maximal bound.

With fixed geometric constants, the Newton-based polarity-process and direct
interior-point estimates share the $m^{3.5}$ factor, but the former retains the
outer-iteration factor. Relative to \texttt{Ellip}, it differs mainly in the
first logarithmic factor. More generally, suppose
$L_\gamma\leq m^{1-\alpha}$ for some $\alpha>0$.
Then
\[
    \log\!\left(\frac{m^3}{L_\gamma^2}\right)
    \leq\left(2+\frac1\alpha\right)\log\!\left(\frac{m}{L_\gamma}\right)
    =\mathcal O\!\left(\log\!\frac{m}{L_\gamma}\right).
\]
For $\gamma\geq1/e$, this holds with $\alpha=1$, including the high-accuracy
regime $\gamma\uparrow1$. In this regime, the first logarithmic factor in the
polarity-process estimate has no explicit $R$-dependence when the outer
geometric constants are fixed, thanks to Khachiyan's pre-rounding procedure
\cite{khachiyan1996rounding}. Our $\mathcal{T}_{\text{PP-IPM}}$ bound does not improve
Anstreicher's direct worst-case bound. This structurally compares an iterative
geometric process with a direct interior-point method; it does not claim
worst-case dominance.

Sun and Freund \cite{sunfreund2004mvce} developed the dual reduced Newton (DRN)
algorithm, whose Newton systems exploit the MinCE structure more sharply than
generic determinant-maximization interior-point machinery. They also show that
the DRN objective is not self-concordant but is closely related to a self-concordant function. Thus, while DRN is
computationally effective, especially with active-set strategies, it does not
plug as directly as the self-concordant path-following framework into an overall
arithmetic bound for the outer polarity process.

The BCD estimate has a lower power of $m$ but stronger dependence on $n$ and
target accuracy. Neither estimate dominates uniformly, reflecting the cost of a
first-order inner method. We next examine WA-TY.

\subsubsection{The WA-TY modified Frank--Wolfe method}\label{subsec: arith_waty}
Ahipasaoglu et al. \cite{ahipasaoglu2008fw} viewed Khachiyan's BCD as
conditional gradient (Frank--Wolfe) on the dual simplex. They established
local linear convergence for its Wolfe--Atwood--Todd--Yildirim (WA-TY)
modification with away and drop steps. We translate this fixed-instance dual
MinCE result into an inexact lifted oracle and then into the double-looped
polarity process.

Fix $x \in \mathrm{int}(P)$ and write $d:=n+1$. Since for every $\lambda\in \Delta_m$,
\[
    \sum_{i=1}^m \lambda_i
    \begin{bmatrix}
        \tilde a_i(x)\\1
    \end{bmatrix}
    \begin{bmatrix}
        \tilde a_i(x)\\1
    \end{bmatrix}^{\top}
    =
    \begin{bmatrix}
        A(x,\lambda) & c(x,\lambda) \\
        c(x,\lambda)^\top & 1
    \end{bmatrix},
\]
using \eqref{eq: dual_MinCE_quantities} and $\sum_i\lambda_i=1$.
Thus \eqref{eq: dual_MinCE} is exactly the fixed-instance simplex
determinant-maximization problem considered in \cite{ahipasaoglu2008fw}. For $\lambda\in \Delta_m$, define
\[
    \mathsf M(x,\lambda):=
    \begin{bmatrix}
        A(x,\lambda) & c(x,\lambda)\\
        c(x,\lambda)^\top & 1
    \end{bmatrix}.
\]
When $\mathsf M(x,\lambda)\succ0$, let
\[
    w_i(x,\lambda):=
    \begin{bmatrix}
        \tilde a_i(x)\\1
    \end{bmatrix}^{\top}
    \mathsf M(x,\lambda)^{-1}
    \begin{bmatrix}
        \tilde a_i(x)\\1
    \end{bmatrix},
    \qquad i\in[m].
\]
Following \cite{ahipasaoglu2008fw}, we say
$\lambda\in\Delta_m$ with $\mathsf M(x,\lambda)\succ0$ is a
\emph{strong $\zeta$-approximate maximizer} of \eqref{eq: dual_MinCE} if
\begin{subequations}
\begin{align}
    w_i(x,\lambda)\leq & (1+\zeta)d,\quad \forall i\in [m], \label{eq: strong_zeta_upper}\\
    w_i(x,\lambda)\geq & (1-\zeta)d,\quad \forall i\in \mathrm{supp}(\lambda). \label{eq: strong_zeta_lower}
\end{align}
\end{subequations}
The upper inequality \eqref{eq: strong_zeta_upper} implies that
\begin{align}\label{eq: waty_recovery}
    \wh{\mathcal{Q}}(x;\lambda,\zeta):=
    \frac{1}{d(1+\zeta)}
    \mathsf M(x,\lambda)^{-1}
\end{align}
is feasible for the lifted MinCE problem \eqref{eq: lifted_MinCE}. Indeed, for every $i$,
\[
    \begin{bmatrix}
        \tilde a_i(x)\\1
    \end{bmatrix}^{\top}
    \wh{\mathcal{Q}}(x;\lambda,\zeta)
    \begin{bmatrix}
        \tilde a_i(x)\\1
    \end{bmatrix}
    = \frac{w_i(x,\lambda)}{d(1+\zeta)}
    \leq 1.
\]
Thus such a point yields a feasible lifted matrix for the outer process.

\begin{theorem}{\cite[Proposition~2.5 and Theorem~2.6(a)]{ahipasaoglu2008fw}}\label{thm: waty_fixed_instance}
    Fix $x\in \mathrm{int}(P)$, consider \eqref{eq: dual_MinCE}, and initialize WA-TY with $\lambda_i^0=1/m$. Adapting the cited iteration bound, there are
    positive, data-dependent constants $C_{\mathrm{WA}}(x)$ and $J(x)$ such that, for every
    $\zeta\in(0,1)$, WA-TY requires at most
    \begin{align}\label{eq: waty_inner_iterations}
        J(x)+56C_{\mathrm{WA}}(x)\log\left(\frac{1}{\zeta}\right)
    \end{align}
    iterations to produce a strong $\zeta$-approximate maximizer
    $\lambda_\zeta(x)$. The recovered matrix from
    \eqref{eq: waty_recovery} satisfies the stronger primal estimate
    \begin{align}\label{eq: waty_primal_gap}
        0
        \leq
        -\log\det \wh{\mathcal{Q}}(x;\lambda_\zeta(x),\zeta)
        +\log\det \mathcal{Q}(x)
        \leq d\log(1+\zeta).
    \end{align}
\end{theorem}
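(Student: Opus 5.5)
The statement has two parts, and they need different treatment. The iteration bound \eqref{eq: waty_inner_iterations} is imported from \cite{ahipasaoglu2008fw}. The primal estimate \eqref{eq: waty_primal_gap} I would prove directly from the strong approximation conditions.

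\textbf{Iteration bound.} I would check that \eqref{eq: dual_MinCE} at fixed $x$ is exactly their problem with points $[\tilde a_i(x)^\top,1]^\top\in\R^d$; this identification was already verified above via $\mathsf M(x,\lambda)$. Their Proposition~2.5 bounds the number of iterations needed to reach a fixed accuracy from the uniform start $\lambda^0_i=1/m$. Their Theorem~2.6(a) then gives local linear convergence with a data-dependent rate, and each halving of the accuracy parameter costs $O(C_{\mathrm{WA}})$ iterations. I would absorb the first phase, together with the passage from their $\epsilon$-approximate notion to the strong notion \eqref{eq: strong_zeta_upper}--\eqref{eq: strong_zeta_lower}, into $J(x)$. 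I would absorb the constant in their logarithmic count into the factor $56C_{\mathrm{WA}}(x)$.

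\textbf{Primal estimate.} Write $M:=\mathsf M(x,\lambda_\zeta(x))$.
\begin{itemize}
\item \emph{Lower bound.} Feasibility of $\wh{\mathcal Q}:=\wh{\mathcal Q}(x;\lambda_\zeta(x),\zeta)$ was shown just before the theorem. Optimality of $\mathcal Q(x)$ then gives $-\log\det\wh{\mathcal Q}\ge -\log\det\mathcal Q(x)$.
\item \emph{Upper bound, weak duality.} For any feasible $\mathcal Q'$ and any $\lambda\in\Delta_m$, summing the constraints with weights $\lambda_i$ gives $\tr(\mathsf M(x,\lambda)\mathcal Q')\le 1$. By AM--GM on the eigenvalues of $\mathsf M^{1/2}\mathcal Q'\mathsf M^{1/2}$,
\[
\det(\mathsf M\mathcal Q')\le (1/d)^d .
\]
Hence $-\log\det\mathcal Q'\ge \log\det\mathsf M(x,\lambda)+d\log d$. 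In particular, $-\log\det\mathcal Q(x)\ge \log\det M + d\log d$.
\item \emph{Evaluating at the recovered matrix.} Directly from \eqref{eq: waty_recovery},
\[
-\log\det\wh{\mathcal Q} = \log\det M + d\log d + d\log(1+\zeta).
\]
Subtracting the weak-duality inequality yields the gap bound $d\log(1+\zeta)$.
\end{itemize}
Only the upper condition \eqref{eq: strong_zeta_upper} is used here, through feasibility.

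\textbf{Main obstacle.} The delicate step is faithfully adapting the cited bound. Their analysis is stated with their own accuracy notion and constants, and one must make sure the uniform initialization and the strong (two-sided) criterion are covered. I would handle this by quoting their Theorem~2.6(a), which is already phrased for strong $\zeta$-approximate solutions, and declaring any discrepancy to be part of the data-dependent $J(x)$.
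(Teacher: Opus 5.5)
\textbf{Primal estimate.} This part is correct, but you reach \eqref{eq: waty_primal_gap} by a slightly different route from the paper. The paper sets $g^*(x):=\max_{\mu\in\Delta_m}\log\det\mathsf M(x,\mu)$. It then uses the strong-duality identity $-\log\det\mathcal Q(x)=g^*(x)+d\log d$ to write the gap exactly as $d\log(1+\zeta)-[g^*(x)-\log\det\mathsf M(x,\lambda_\zeta(x))]$, and discards the nonnegative bracket.

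You prove only the weak-duality inequality $-\log\det\mathcal Q'\ge\log\det\mathsf M(x,\lambda)+d\log d$ for feasible $\mathcal Q'$. You get it by summing the weighted constraints and applying AM--GM to the eigenvalues of $\mathsf M^{1/2}\mathcal Q'\mathsf M^{1/2}$. Your version is more self-contained, since it needs neither strong duality nor the optimizer formula $\mathcal Q(x)=\frac{1}{d}\mathsf M(x,\lambda^*)^{-1}$. The paper's identity buys slightly more: the slack in the estimate is exactly the dual suboptimality of $\lambda_\zeta(x)$.

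\textbf{Iteration bound.} Your strategy is the paper's: quote the fixed-instance result and put everything $\zeta$-independent into $J(x)$. However, ``declaring any discrepancy to be part of $J(x)$'' is valid only once you check that the discrepancy is bounded independently of $\zeta$. The paper does this explicitly in two places.

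\emph{Range of $\zeta$.} The cited perturbation estimate is local and holds only for $0<\zeta\le\zeta_0(x)$. For $\zeta\in(\zeta_0(x),1)$, you run WA-TY, whose trajectory does not depend on the stopping tolerance, to strong $\zeta_0(x)$-accuracy. This implies strong $\zeta$-accuracy, because both defining conditions are monotone in $\zeta$. You then enlarge $J(x)$ by $56C_{\mathrm{WA}}(x)\log(1/\zeta_0(x))$.

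\emph{Drop steps.} With the full-support start $\lambda_i^0=1/m$, drop iterations must be counted. At most $m$ of them remove initially supported components. All remaining drops are paired with earlier add iterations in the cited proof, so the surplus is an additive $m$ that $J(x)$ can absorb.

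Finally, in the paper's reading, Proposition~2.5 of the cited source supplies the rate constant $C_{\mathrm{WA}}(x)$. It is not a first-phase iteration count, as you describe it.
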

\begin{proof}
    The cited perturbation estimate first gives
    \eqref{eq: waty_inner_iterations} for all sufficiently small $\zeta$, say
    $0<\zeta\leq\zeta_0(x)<1$. For a requested
    $\zeta\in(\zeta_0(x),1)$, it is enough to run to the stronger
    $\zeta_0(x)$-accuracy; enlarging the data-dependent transient constant by
    $56C_{\mathrm{WA}}(x)\log(1/\zeta_0(x))$ gives the displayed bound for every
    $\zeta\in(0,1)$. Thus no knowledge of $\zeta_0(x)$ is needed in the outer
    tolerance choice. For the full-support initialization used here, $J(x)$ also absorbs the finite initial phase and the at most $m$ drop iterations associated with initially supported components; every other drop iteration is paired with an earlier add iteration in the cited proof. The constant $C_{\mathrm{WA}}(x)$ comes from Proposition~2.5 of the
    cited source. Equation \eqref{eq: strong_zeta_upper} makes the recovered
    matrix feasible. Let $g^*(x):=\max_{\mu\in\Delta_m}\log\det\mathsf M(x,\mu).$
    Direct calculation gives
    \begin{align*}
        &-\log\det \wh{\mathcal Q}(x;\lambda,\zeta)
        +\log\det\mathcal Q(x)\\
        &\qquad=d\log(1+\zeta)
        -\bigl[g^*(x)-\log\det\mathsf M(x,\lambda)\bigr] \leq d\log(1+\zeta).
    \end{align*}
    The lower bound in \eqref{eq: waty_primal_gap} follows from optimality of
    $\mathcal Q(x)$.
\end{proof}

For arithmetic conversion, use the full-support initialization $\lambda_i=1/m$.
The lifted points
span $\R^d$, so $\mathsf M(x,\lambda)\succ0$; forming this matrix and its inverse
costs $\mathcal O(md^2+d^3)=\mathcal O(md^2)$ because $m\geq d$. A dense
implementation evaluating all $m$ leverage scores and updating the inverse uses
$\mathcal O(md^2)$ work per iteration. By \eqref{eq: waty_inner_iterations},
WA-TY's arithmetic complexity is
\begin{equation}\label{eq: WA-TY-complexity}
    \mathcal O\!\left(
    md^2\left[J(x)+56C_{\mathrm{WA}}(x)\log(1/\zeta)\right]\right).
\end{equation}

\begin{theorem}[Realized-trajectory WA-TY accounting]\label{thm: overall_arithmetic_complexity_waty}
    Let $x_0\in\mathrm{int}(P)$ and $\gamma\in(0,1)$, and let $\epsilon$ and
    $N_\gamma$ be as in Theorem~\ref{thm: inexact_fixed_tolerance}. Fix
    \begin{align}\label{eq: zeta_gamma_condition}
        \zeta_\gamma:=\min\left\{\frac12,\frac{\epsilon}{n+1}\right\}.
    \end{align}
    Run Algorithm~\ref{alg: inexact_polarity_process} for $N_\gamma+1$ updates,
    using WA-TY to strong $\zeta_\gamma$-accuracy at each realized outer point.
    Afterward define
    \begin{align}\label{eq: waty_const}
        C_{\mathrm{WA},\gamma}:=\max_{0\leq k\leq N_\gamma}C_{\mathrm{WA}}(x_k),
        \qquad
        J_\gamma:=\max_{0\leq k\leq N_\gamma}J(x_k).
    \end{align}
    The last ellipsoid is $\gamma$-maximal, and the dense implementation model
    above gives the realized-trajectory upper bound
    \begin{align*}
        \mathcal T_{\mathrm{PP-WA-TY}}
        =\mathcal O\!\left(
          (N_\gamma+1)m(n+1)^2
          \left[J_\gamma+56C_{\mathrm{WA},\gamma}
          \log\left(\frac1{\zeta_\gamma}\right)\right]
        \right).
    \end{align*}
\end{theorem}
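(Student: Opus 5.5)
The plan is to reduce this to Theorem~\ref{thm: inexact_fixed_tolerance}: I will check that the recovered lifted matrix \eqref{eq: waty_recovery} is a legitimate inexact MinCE oracle of tolerance $\epsilon$ in the sense of Definition~\ref{def: inexact oracle}. The arithmetic is then a sum of per-iteration WA-TY costs along the realized trajectory.

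For the oracle check, fix an outer iterate $x_k$, $0\le k\le N_\gamma$. Because $\zeta_\gamma\le\frac12<1$, Theorem~\ref{thm: waty_fixed_instance} applies, and WA-TY returns a strong $\zeta_\gamma$-approximate maximizer $\lambda_{\zeta_\gamma}(x_k)$. By \eqref{eq: strong_zeta_upper}, the matrix $\wh{\mathcal Q}(x_k;\lambda_{\zeta_\gamma}(x_k),\zeta_\gamma)$ is feasible for \eqref{eq: lifted_MinCE} and positive definite. By \eqref{eq: waty_primal_gap} and $\log(1+t)\le t$, its objective gap is at most
\[
(n+1)\log(1+\zeta_\gamma)\le (n+1)\zeta_\gamma\le \epsilon .
\]
This holds by the choice \eqref{eq: zeta_gamma_condition}, so each call is an $\mathrm{MinCE}_\epsilon$ oracle.

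This gives the induction I need. Theorem~\ref{thm: inexact_fixed_tolerance} (via Proposition~\ref{prop: inexact_outer_control}) only uses that each oracle call is $\epsilon$-accurate at the current iterate. Its proof therefore applies verbatim: every $x_k$ stays in $\mathcal L_{\le v(x_0)+1}$, hence in $\mathrm{int}(P)$, and each next call is well-posed. After $N_\gamma+1$ updates, the ellipsoid $\E(x_{N_\gamma+1},E_{N_\gamma+1})$ is $\gamma$-maximal. The one subtle point is that the WA-TY output at $x_k$ is an arbitrary $\epsilon$-optimal feasible point. The theorem was proved for any such oracle, so no extra argument is needed.

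For the arithmetic, I apply \eqref{eq: WA-TY-complexity} at each realized point $x_k$, which costs $\mathcal O(md^2[J(x_k)+56C_{\mathrm{WA}}(x_k)\log(1/\zeta_\gamma)])$. I then bound $J(x_k)\le J_\gamma$ and $C_{\mathrm{WA}}(x_k)\le C_{\mathrm{WA},\gamma}$ using the a posteriori maxima \eqref{eq: waty_const}; these are finite because there are only finitely many $k$. The outer update in Algorithm~\ref{alg: inexact_polarity_process} costs $\mathcal O(d^3)\subseteq\mathcal O(md^2)$ for block inversion, and forming the lifted points $\tilde a_i(x_k)$ costs $\mathcal O(mn)$. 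Both are absorbed into the per-call cost. Summing over the $N_\gamma+1$ calls gives the stated bound with $d=n+1$.

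The main obstacle is conceptual rather than computational. The constants $J(x_k)$ and $C_{\mathrm{WA}}(x_k)$ depend on the trajectory, and the trajectory depends on the oracle outputs. Bounding them uniformly a priori would require continuity of the WA-TY constants in $x$, which the cited result does not give. I will avoid this by defining the constants after the run, as the statement does, so the bound is realized-trajectory accounting and not a uniform complexity claim.
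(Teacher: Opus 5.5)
Your proposal is correct and matches the paper's proof: it checks via \eqref{eq: waty_primal_gap} that the WA-TY recovery is a feasible $\epsilon$-accurate lifted oracle at each realized iterate, then invokes Theorem~\ref{thm: inexact_fixed_tolerance}, then sums \eqref{eq: WA-TY-complexity} over the $N_\gamma+1$ calls using the a posteriori maxima. Your one-line bound $(n+1)\log(1+\zeta_\gamma)\le(n+1)\zeta_\gamma\le\epsilon$ holds because $\zeta_\gamma\le\epsilon/(n+1)$ always, so it slightly streamlines the paper's two-case argument; you should also note that $\zeta_\gamma>0$, since $\epsilon>0$ for $\gamma\in(0,1)$.
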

\begin{proof}
    With $d=n+1$, \eqref{eq: zeta_gamma_condition} gives
    $d\log(1+\zeta_\gamma)\leq\epsilon$: if
    $\zeta_\gamma=\epsilon/d$, use $\log(1+t)\leq t$; otherwise
    $\epsilon\geq d/2>d\log(3/2)$. Thus Theorem
    \ref{thm: waty_fixed_instance} supplies a feasible $\epsilon$-accurate
    lifted oracle at every realized iterate. Apply Theorem
    \ref{thm: inexact_fixed_tolerance} and sum the fixed-instance iteration
    counts using \eqref{eq: WA-TY-complexity} and \eqref{eq: waty_const}.
\end{proof}

Under the same fixed-geometric-constants interpretation and as $\gamma\uparrow1$,
$\zeta_\gamma=\Theta(L_\gamma^2/[m^2(n+1)])$. The corresponding displayed
cost may be written
\[
    \mathcal O\!\left(
      m(n+1)^2\left[
      J_\gamma+C_{\mathrm{WA},\gamma}\log\!\left(
      \frac{m^2(n+1)}{L_\gamma^2}\right)\right]
      \left[1+\log^+\!\frac{2n\log R}{L_\gamma}\right]
    \right).
\]
From the leading factors, $\mathcal T_{\mathrm{PP-WA-TY}}$ has
$m(n+1)^2$, compared with $m^3n^3$ in $\mathcal T_{\mathrm{PP-BCD}}$ and
$m^{3.5}$ in $\mathcal T_{\mathrm{PP-IPM}}$ and \cite{anstreicher2002improved,khachiyan1990complexity}.
Because the lifted points span $\R^{n+1}$, $n+1\leq m$, hence
$m(n+1)^2=\mathcal O(m^3)$. WA--TY also replaces BCD's inverse-quadratic
dependence on $L_\gamma$ by a logarithmic term. Thus, when $J_\gamma$ and
$C_{\mathrm{WA},\gamma}$ are moderate, the displayed WA--TY expression has the
smallest displayed explicit polynomial prefactor in $m$ and $n$ among these bounds.
This remains a posteriori work accounting for the realized trajectory, not an \emph{a
priori} family-uniform theorem or a claim of uniform worst-case dominance;
such a theorem would require known uniform bounds on $C_{\mathrm{WA}}(x)$ and
$J(x)$ over the shifted polar MinCE family.

\section{Numerical experiments}\label{sec:numerical-experiments}
We compare practical inner oracles for the subproblem $\mathrm{MinCE}\bigl((P-x_k)^\circ\bigr)$
within the polarity process against the Khachiyan--Todd \texttt{Ellip}
framework \cite{khachiyan1990complexity} and Zhang and Gao's F2PD 
\cite{zhanggao2003mvie}. Our codes are implemented in Julia \cite{bezanson2017julia}
and executed with 8 threads on a 10-core Apple M4
with 32 GB of memory.

\paragraph{Test instances.}
We test four families of \eqref{eq: polytope}, whose
$\mathrm{A}\in\R^{m\times n}$ rows are $a_i^\top$:
\begin{enumerate}
\item \textbf{Box.} $\mathrm{A}=[I_n;-I_n]$, so $P=[-1,1]^n$ and $m=2n$.
\item \textbf{Skewed box.} $\mathrm{A}=[B;-B]$, where
$B=U\Diag(s)V^\top$, where $U$ and $V$ are the left and right singular-vector
matrices of a Gaussian random matrix and $s$ is evenly spaced in
$[0.45,1.85]$; again $m=2n$.
\item \textbf{Random facets.} We form $\mathrm{A}$ by augmenting $[I_n;-I_n]$ with
$m_{\mathrm{extra}}$ normalized Gaussian normals, each multiplied by an
independent uniform factor in $[0.55,1.40]$, giving
$m=2n+m_{\mathrm{extra}}$.
\item \textbf{Cross-polytope.} All vectors in $\{\pm1\}^n$ are facet normals, so
$m=2^n$.
\end{enumerate}
We fix all random-construction and start-direction seeds. Thus each displayed
$(n,m)$ uses one deterministic instance and one start-direction pair. These are
exploratory case studies, not a multi-seed statistical performance profile.


For each instance we use three interior starts, $x^{(c)}$, $x^{(m)}$, and
$x^{(b)}$. The central point $x^{(c)}$ solves
\[
    \max_{x\in \R^n,\ t\ge 0}\ \{t: a_i^\top x \le 1-t,\ i\in [m]\}.
\]
We normalize independent Gaussian directions $d_m,d_b$ and define
\[
    \lambda_{\max}(d)
    :=
    \sup\{\lambda\ge 0: x^{(c)}+\lambda d \in P\}.
\]
We then set
\[
    x^{(m)} = x^{(c)} + 0.45\,\lambda_{\max}(d_m)d_m,
    \qquad
    x^{(b)} = x^{(c)} + 0.90\,\lambda_{\max}(d_b)d_b.
\]
These are the central (c), midway (m), and near-boundary (b) initial points used
by every method.
For the centrally symmetric box, skewed-box, and cross-polytope families,
$x^{(c)}$ is also the MaxIE center, so the central rows are sanity checks and
remain included in the reported Tables \ref{tab:benchmark_box_times}-\ref{tab:benchmark_crosspoly_times}.

\paragraph{Compared methods.}
Table~\ref{tab:benchmark_methods} lists two direct MaxIE baselines and five PP
variants.
\begin{table}[!htbp]
\centering
\small
\begin{tabular}{@{}>{\raggedright\arraybackslash}p{0.27\textwidth}@{\hspace{1em}}>{\raggedright\arraybackslash}p{0.69\textwidth}@{}}
\toprule
Abbreviation & Description \\
\midrule
\texttt{Ellip} & The direct MaxIE framework of Khachiyan
and Todd \cite{khachiyan1990complexity}, with its subproblems solved by the
general-purpose nonlinear interior-point solver \texttt{IPOPT}
\cite{wachter2006implementation}. \\ \hline
\texttt{F2PD} & Our Julia adaptation of the Zhang--Gao direct primal--dual
iteration \cite{zhanggao2003mvie} distributed in the MATLAB package \cite{cousins2022volumeandsampling}, using
Julia linear algebra and regularization. \\ \hline
\texttt{PP-IPOPT} & Polarity process (PP) with the \texttt{IPOPT} applied to the dual MinCE reformulation
\eqref{eq: equivalent_dual_MinCE}. \\ \hline
\texttt{PP-DRN-AS} & PP with our implementation of the Sun--Freund dual
reduced Newton (DRN) method \cite{sunfreund2004mvce}, embedded in the code's
active-set (AS) heuristic. \\ \hline
\texttt{PP-BCD} & PP with our implementation of Khachiyan's barycentric
coordinate descent (BCD) method \cite{khachiyan1996rounding}. \\ \hline
\texttt{PP-WA-TY-Kha} & PP with the Wolfe--Atwood--Todd--Yildirim (WA-TY)
away-step method and a Khachiyan-style uniform (Kha) initialization
\cite{ahipasaoglu2008fw}. \\ \hline
\texttt{PP-WA-TY-KY} & PP with WA-TY and Kumar--Y{\i}ld{\i}r{\i}m
(KY) sparse initialization \cite{ahipasaoglu2008fw,kumar2005minimum}. \\
\bottomrule
\end{tabular}
\caption{Benchmark abbreviations and their algorithmic origins.}
\label{tab:benchmark_methods}
\end{table}

We omit Anstreicher's improved direct MaxIE method
\cite{anstreicher2002improved} as a baseline because the paper gives an
important worst-case complexity refinement but no readily reproducible
implementation. We also omit recent leverage-score algorithms as inner-oracle
baselines. Zhao's away-step method \cite{Zhao2025AwayStep} belongs to the family represented by
\texttt{PP-WA-TY}. The others concern fixed centered
instances, while each polarity step changes the lifted rows. In particular, work
\cite{LiYuJiangGaoHan2026} requires exact scores, nondegeneracy, and
condition-dependent face identification not controlled uniformly along this
trajectory. The lazy/streaming, sketched input-sparsity, and small-treewidth
models of \cite{WoodruffYasuda2024,CaoLiSongYangZhou2025} also differ from our
solver-timing setting.


All polarity variants and \texttt{Ellip} use the following outer rule for
$\mathcal{E}_k=\E(x_k,H_k)$ and its normalized volume
$V_k=\det(H_k)^{-1/2}$:
\[
    \frac{|V_k-V_{k-1}|}{\max\{V_k,V_{k-1}\}} \le 10^{-8}.
\]
This is a practical volume-stabilization stopping heuristic. Runs are capped
at $10,000$ outer iterations and use a nominal wall-clock budget
$T=1800$ seconds.
The base inner-solver target is $\tau_{\rm sol}=10^{-9}$, interpreted according
to each implementation as we explain next. \texttt{Ellip} calls \texttt{IPOPT} with this tolerance
and a $100,000$-iteration cap per subproblem. \texttt{PP-IPOPT} uses the same
base settings, while its robustness retries can relax the tolerance to at most
$20\tau_{\rm sol}$ and the per-attempt cap to $200,000$. BCD and WA-TY allow
at most $100,000$ weight updates per MinCE call and target $ \max\{\kappa_+-d,d-\kappa_-\}\le \tau_{\rm sol},$
where $d=n+1$, $\kappa_+$ is the largest lifted leverage score, and
$\kappa_-$ is the smallest score on the current support. For
\texttt{PP-DRN-AS}, every reduced Newton solve has a $100,000$-iteration cap
and targets
\[
    \|{\bf 1}-t-h(u)\|_\infty \le \tau_{\rm sol}
    \quad\text{and}\quad
    \frac{u^\top t}{\max\{|f(u)|,1\}} \le \tau_{\rm sol},
\]
where $u>0$ and $t$ are the dual-weight and slack vectors, $h_i(u)$ are leverage
scores, and $f(u)=-\tfrac12\log\det Q(u)$ for the current DRN shape matrix $Q(u)$. The
active-set driver allows at most $50$ passes and targets
$\max_i h_i\le1+\tau_{\rm sol}$.

The direct \texttt{F2PD} implementation instead uses the same nominal time
budget, at most $80$ Newton iterations, residual parameter $10^{-6}$, minimum
barrier parameter $10^{-8}$, and $\tau_0=0.75$. It declares convergence when
the maximum of its primal, dual, and complementarity residuals is below
$10^{-6}(1+\sqrt m)$ and the barrier parameter has reached $10^{-8}$.

These are method-specific stopping targets, not a common output-accuracy
certificate. At an inner time or iteration limit, the code may
continue with a returned primal ellipsoid: PP recovery rescales its covering
ellipsoid to feasibility, while the \texttt{IPOPT} paths also accept a nearly
feasible primal status. Thus a table success means that the outer
relative-volume rule was reached for \texttt{Ellip} or a polarity variant, or
that the stated residual rule was reached for \texttt{F2PD}. In particular,
success for \texttt{Ellip} or a polarity variant does not certify that every
inner solve met its base target.

\paragraph{Reported quantities.}
Tables \ref{tab:benchmark_box_times}--\ref{tab:benchmark_crosspoly_times} report
wall-clock time under the stopping rules above. Each cell records one timed
method call, excluding instance and starting-point construction. Rows use
$(n,m,s)$, with $s\in\{c,m,b\}$ denoting the start. The final rows report
sgm-PAR2: the geometric mean of $p_i+1$, minus one,
using unrounded values, where $p_i$ is the raw time for a successful run and
$2T=3600$ seconds otherwise. Here $<0.01$ denotes a positive time below
$0.01$ seconds; MAX and ERR denote a time limit and an exception.

\begin{table}[!ht]
\centering
\scriptsize
\setlength{\tabcolsep}{4pt}
\renewcommand{\arraystretch}{0.90}
\setlength{\abovecaptionskip}{3pt}
\setlength{\belowcaptionskip}{0pt}
\resizebox{\textwidth}{!}{
\begin{tabular}{crrrrrrr}
\toprule
$(n,m,s)$ & \texttt{Ellip} & \texttt{F2PD} & \texttt{PP-IPOPT} & \texttt{PP-DRN-AS} & \texttt{PP-BCD} & \texttt{PP-WA-TY-Kha} & \texttt{PP-WA-TY-KY} \\
\midrule
$(20,40,c)$ & 0.51 & $<0.01$ & 1.11 & 1.10 & $<0.01$ & $<0.01$ & $<0.01$ \\
$(20,40,m)$ & 0.27 & $<0.01$ & 0.25 & 0.04 & $<0.01$ & $<0.01$ & $<0.01$ \\
$(20,40,b)$ & 0.26 & $<0.01$ & 0.61 & 0.02 & $<0.01$ & $<0.01$ & $<0.01$ \\
\hline
$(40,80,c)$ & 2.63 & $<0.01$ & 0.16 & $<0.01$ & $<0.01$ & $<0.01$ & $<0.01$ \\
$(40,80,m)$ & 4.00 & $<0.01$ & 6.39 & 0.08 & 0.04 & 0.04 & 0.04 \\
$(40,80,b)$ & 4.13 & $<0.01$ & 16.61 & 0.21 & 0.08 & 0.08 & 0.08 \\
\hline
$(60,120,c)$ & 14.00 & $<0.01$ & 0.87 & 0.01 & $<0.01$ & $<0.01$ & $<0.01$ \\
$(60,120,m)$ & 21.36 & $<0.01$ & 63.60 & 0.23 & 0.20 & 0.27 & 0.20 \\
$(60,120,b)$ & 22.26 & $<0.01$ & 151.32 & 0.57 & 0.45 & 0.45 & 0.45 \\
\hline
$(80,160,c)$ & 48.72 & $<0.01$ & 2.60 & $<0.01$ & $<0.01$ & $<0.01$ & $<0.01$ \\
$(80,160,m)$ & 74.20 & $<0.01$ & 309.67 & 0.53 & 0.94 & 0.75 & 0.81 \\
$(80,160,b)$ & 76.43 & $<0.01$ & 779.71 & 1.30 & 1.61 & 1.67 & 1.66 \\
\hline
$(100,200,c)$ & 132.45 & $<0.01$ & 6.25 & 0.03 & 0.01 & 0.01 & 0.01 \\
$(100,200,m)$ & 196.53 & $<0.01$ & 1024.13 & 1.19 & 2.52 & 1.94 & 2.01 \\
$(100,200,b)$ & 202.33 & $<0.01$ & MAX & 2.69 & 4.63 & 4.59 & 4.42 \\
\midrule
sgm-PAR2 & 15.95 & $<0.01$ & 22.04 & 0.41 & 0.42 & 0.40 & 0.40 \\
\bottomrule
\end{tabular}}
\caption{Wall-clock times for the box family.}
\label{tab:benchmark_box_times}
\end{table}

\begin{table}[!ht]
\centering
\scriptsize
\setlength{\tabcolsep}{4pt}
\renewcommand{\arraystretch}{0.90}
\setlength{\abovecaptionskip}{3pt}
\setlength{\belowcaptionskip}{0pt}
\resizebox{\textwidth}{!}{
\begin{tabular}{crrrrrrr}
\toprule
$(n,m,s)$ & \texttt{Ellip} & \texttt{F2PD} & \texttt{PP-IPOPT} & \texttt{PP-DRN-AS} & \texttt{PP-BCD} & \texttt{PP-WA-TY-Kha} & \texttt{PP-WA-TY-KY} \\
\midrule
$(20,40,c)$ & 0.21 & $<0.01$ & 0.01 & $<0.01$ & $<0.01$ & $<0.01$ & $<0.01$ \\
$(20,40,m)$ & 0.23 & $<0.01$ & 0.25 & 0.02 & $<0.01$ & $<0.01$ & $<0.01$ \\
$(20,40,b)$ & 0.28 & $<0.01$ & 0.66 & 0.01 & $<0.01$ & $<0.01$ & $<0.01$ \\
\hline
$(40,80,c)$ & 2.95 & $<0.01$ & 0.20 & $<0.01$ & $<0.01$ & $<0.01$ & $<0.01$ \\
$(40,80,m)$ & 4.40 & $<0.01$ & 6.55 & 0.04 & 0.04 & 0.04 & 0.04 \\
$(40,80,b)$ & 4.58 & $<0.01$ & 15.60 & 0.23 & 0.09 & 0.08 & 0.08 \\
\hline
$(60,120,c)$ & 14.82 & $<0.01$ & 0.91 & $<0.01$ & $<0.01$ & $<0.01$ & $<0.01$ \\
$(60,120,m)$ & 22.28 & $<0.01$ & 69.38 & 0.18 & 0.21 & 0.22 & 0.21 \\
$(60,120,b)$ & 23.29 & $<0.01$ & 134.09 & 0.59 & 0.42 & 0.46 & 0.43 \\
\hline
$(80,160,c)$ & 51.02 & $<0.01$ & 2.91 & $<0.01$ & $<0.01$ & $<0.01$ & $<0.01$ \\
$(80,160,m)$ & 76.77 & $<0.01$ & 254.61 & 0.55 & 0.92 & 0.73 & 0.79 \\
$(80,160,b)$ & 79.46 & $<0.01$ & 712.60 & 1.26 & 1.67 & 1.78 & 1.58 \\
\hline
$(100,200,c)$ & 137.07 & $<0.01$ & 7.41 & 0.03 & 0.01 & 0.01 & 0.01 \\
$(100,200,m)$ & 203.39 & $<0.01$ & 956.01 & 0.78 & 3.79 & 2.05 & 2.51 \\
$(100,200,b)$ & 209.21 & $<0.01$ & MAX & 2.83 & 5.86 & 4.85 & 4.53 \\
\midrule
sgm-PAR2 & 16.36 & $<0.01$ & 20.80 & 0.32 & 0.47 & 0.41 & 0.41 \\
\bottomrule
\end{tabular}}
\caption{Wall-clock times for the skewed-box family.}
\label{tab:benchmark_skewed_box_times}
\end{table}

\begin{table}[!ht]
\centering
\scriptsize
\setlength{\tabcolsep}{4pt}
\renewcommand{\arraystretch}{0.90}
\setlength{\abovecaptionskip}{3pt}
\setlength{\belowcaptionskip}{0pt}
\resizebox{\textwidth}{!}{
\begin{tabular}{crrrrrrr}
\toprule
$(n,m,s)$ & \texttt{Ellip} & \texttt{F2PD} & \texttt{PP-IPOPT} & \texttt{PP-DRN-AS} & \texttt{PP-BCD} & \texttt{PP-WA-TY-Kha} & \texttt{PP-WA-TY-KY} \\
\midrule
$(5,2000,c)$ & 1.78 & 1.79 & 144.49 & 0.04 & 0.95 & 1.89 & 0.91 \\
$(5,2000,m)$ & 1.74 & 2.11 & 268.67 & 0.02 & 1.23 & 2.29 & 1.19 \\
$(5,2000,b)$ & 2.13 & 2.74 & 315.86 & 0.05 & 1.45 & 2.85 & 1.47 \\
\hline
$(10,2000,c)$ & 40.89 & 1.30 & 944.13 & 0.04 & 9.27 & 9.44 & 8.94 \\
$(10,2000,m)$ & 52.91 & 1.50 & 1136.57 & 0.06 & 12.38 & 12.47 & 12.34 \\
$(10,2000,b)$ & 46.03 & 2.06 & 1183.71 & 0.06 & 9.41 & 10.57 & 8.93 \\
\hline
$(20,2000,c)$ & 107.21 & 1.16 & 1373.76 & 0.36 & 8.85 & 11.31 & 8.87 \\
$(20,2000,m)$ & 112.96 & 1.33 & MAX & 0.46 & 11.25 & 14.35 & 10.76 \\
$(20,2000,b)$ & 137.61 & 1.78 & MAX & 0.53 & 12.58 & 17.24 & 12.66 \\
\hline
$(50,5000,c)$ & MAX & 33.99 & MAX & 6.69 & 110.96 & 170.71 & 107.94 \\
$(50,5000,m)$ & MAX & 34.59 & MAX & 10.56 & 151.27 & 238.47 & 168.94 \\
$(50,5000,b)$ & MAX & 18.37 & ERR & 23.49 & 174.77 & 319.59 & 172.25 \\
\midrule
sgm-PAR2 & 86.52 & 3.90 & 1238.73 & 1.13 & 13.33 & 18.32 & 13.23 \\
\bottomrule
\end{tabular}}
\caption{Wall-clock times for the selected random-facet instances.}
\label{tab:benchmark_random_facets_times}
\end{table}

\begin{table}[!htbp]
\centering
\scriptsize
\setlength{\tabcolsep}{4pt}
\resizebox{\textwidth}{!}{
\begin{tabular}{crrrrrrr}
\toprule
$(n,m,s)$ & \texttt{Ellip} & \texttt{F2PD} & \texttt{PP-IPOPT} & \texttt{PP-DRN-AS} & \texttt{PP-BCD} & \texttt{PP-WA-TY-Kha} & \texttt{PP-WA-TY-KY} \\
\midrule
$(5,32,c)$ & 0.01 & $<0.01$ & 1.28 & $<0.01$ & $<0.01$ & $<0.01$ & $<0.01$ \\
$(5,32,m)$ & 0.01 & $<0.01$ & 0.05 & 0.01 & $<0.01$ & $<0.01$ & $<0.01$ \\
$(5,32,b)$ & $<0.01$ & $<0.01$ & 0.07 & $<0.01$ & $<0.01$ & $<0.01$ & $<0.01$ \\
\hline
$(7,128,c)$ & 0.04 & $<0.01$ & 0.02 & 1.10 & $<0.01$ & $<0.01$ & $<0.01$ \\
$(7,128,m)$ & 0.06 & $<0.01$ & 0.34 & 1.17 & $<0.01$ & $<0.01$ & $<0.01$ \\
$(7,128,b)$ & 0.07 & $<0.01$ & 0.94 & 2.81 & $<0.01$ & $<0.01$ & $<0.01$ \\
\hline
$(9,512,c)$ & 0.49 & 0.01 & 0.23 & 7.69 & 0.02 & $<0.01$ & 0.02 \\
$(9,512,m)$ & 0.70 & 0.07 & 51.00 & 2.35 & 0.03 & $<0.01$ & 0.03 \\
$(9,512,b)$ & 0.71 & 0.08 & 63.40 & 2.56 & 0.04 & 0.05 & 0.04 \\
\hline
$(10,1024,c)$ & 1.62 & 0.09 & 0.85 & 32.50 & 0.04 & $<0.01$ & 0.04 \\
$(10,1024,m)$ & 2.47 & 0.14 & 351.18 & 5.55 & 0.06 & 0.03 & 0.06 \\
$(10,1024,b)$ & 2.62 & 0.25 & 259.60 & 9.55 & 0.10 & 0.16 & 0.11 \\
\hline
$(11,2048,c)$ & 7.21 & 0.31 & 3.77 & 41.36 & 0.09 & $<0.01$ & 0.09 \\
$(11,2048,m)$ & 10.93 & 0.87 & 955.77 & 80.58 & 0.14 & 0.09 & 0.13 \\
$(11,2048,b)$ & 11.34 & 1.37 & ERR & 114.83 & 0.23 & 0.58 & 0.26 \\
\hline
$(12,4096,c)$ & 9.79 & 1.60 & 17.92 & 199.66 & 0.22 & $<0.01$ & 0.23 \\
$(12,4096,m)$ & 15.61 & 5.13 & 1737.79 & 130.99 & 0.36 & 0.33 & 0.37 \\
$(12,4096,b)$ & 16.84 & 8.28 & MAX & 144.97 & 0.86 & 2.31 & 0.84 \\
\midrule
sgm-PAR2 & 2.10 & 0.50 & 23.39 & 10.10 & 0.11 & 0.14 & 0.11 \\
\bottomrule
\end{tabular}}
\caption{Wall-clock times for the cross-polytope family.}
\label{tab:benchmark_crosspoly_times}
\end{table}

\paragraph{Observed trends.}
The polarity process is a versatile framework rather than one fixed algorithm.
Changing the inner MinCE oracle lets the
same outer transformation use a generic interior-point solve, a reduced
Newton active-set method, coordinate steps, or away-step first-order methods.
Under the implementation and stopping conventions above, \texttt{F2PD} records
especially short times on the structured box and skewed-box cases, the dual
reduced Newton active-set oracle on the dense random-facet cases, and the WA-TY
away-step and BCD oracles on the cross-polytope cases. No method is fastest
on every tested family. These deterministic single-seed cases
illustrate how performance can depend on matching the MinCE solver to the
instance geometry.

\section{Conclusion} \label{sec:conclusion}
This paper studies the polarity process for computing a polytope's MaxIE. The
main contribution is a log-volume-potential analysis, independent of prior
asymptotic-convergence
arguments, that establishes global linear contraction of the potential gap along each trajectory.
Its existential, instance-dependent rate is determined by the initial sublevel
geometry and yields a finite volume-ratio bound.

We also analyze an inexact polarity process with approximate polar MinCE
subproblems. The modular framework accepts any MinCE oracle meeting the
required outer lifted-objective tolerance. Combined with existing MinCE
algorithms, it gives conditional arithmetic estimates. The implementation cases
show that recorded performance depends on matching the MinCE oracle to the
instance geometry.

\enlargethispage{3\baselineskip}
\begin{acknowledgements}
The author thanks Prof. X. Andy Sun for introducing the MaxIE problem, sharing his unpublished notes,
engaging in many insightful discussions, and encouraging the author to publish this work. 

During the preparation of this manuscript, the author used ChatGPT and Codex (OpenAI) for 
language editing, reference checking, coding assistant, and as an auxiliary tool for reviewing the exposition and checking mathematical arguments. All mathematical results and proofs were developed and independently verified by the author, who takes full responsibility for the accuracy and integrity of the manuscript.
\end{acknowledgements}

\begingroup
\linespread{0.986}\selectfont
\bibliographystyle{spmpsci}
\bibliography{ref.bib}

\begin{thebibliography}{10}
\providecommand{\url}[1]{{#1}}
\providecommand{\urlprefix}{URL }
\expandafter\ifx\csname urlstyle\endcsname\relax
  \providecommand{\doi}[1]{DOI~\discretionary{}{}{}#1}\else
  \providecommand{\doi}{DOI~\discretionary{}{}{}\begingroup \urlstyle{rm}\Url}\fi

\bibitem{ahipasaoglu2008fw}
Ahipasaoglu, S.D., Sun, P., Todd, M.J.: Linear convergence of a modified {Frank--Wolfe} algorithm for computing minimum-volume enclosing ellipsoids.
\newblock Optimization Methods and Software \textbf{23}(1), 5--19 (2008).
\newblock \doi{10.1080/10556780701589669}

\bibitem{anstreicher2002improved}
Anstreicher, K.M.: Improved complexity for maximum-volume inscribed ellipsoids.
\newblock SIAM Journal on Optimization \textbf{13}(2), 309--320 (2002)

\bibitem{aubrun2017alice}
Aubrun, G., Szarek, S.J.: Alice and Bob Meet Banach: The Interface of Asymptotic Geometric Analysis and Quantum Information Theory, \emph{Mathematical Surveys and Monographs}, vol. 223.
\newblock American Mathematical Society (2017).
\newblock \doi{10.1090/surv/223}

\bibitem{bertsekas1999nonlinear}
Bertsekas, D.P.: Nonlinear Programming, 2nd edn.
\newblock Athena Scientific (1999)

\bibitem{bessaga1959converse}
Bessaga, C.: On the converse of the {Banach} ``fixed-point principle''.
\newblock Colloquium Mathematicum \textbf{7}, 41--43 (1959).
\newblock \doi{10.4064/cm-7-1-41-43}

\bibitem{bezanson2017julia}
Bezanson, J., Edelman, A., Karpinski, S., Shah, V.B.: Julia: A fresh approach to numerical computing.
\newblock SIAM Review \textbf{59}(1), 65--98 (2017)

\bibitem{CaoLiSongYangZhou2025}
Cao, Y., Li, X., Song, Z., Yang, X., Zhou, T.: Faster algorithms for structured {John} ellipsoid computation.
\newblock In: Advances in Neural Information Processing Systems, vol.~38 (2025)

\bibitem{CohenCousinsLeeYang2019}
Cohen, M.B., Cousins, B., Lee, Y.T., Yang, X.: A near-optimal algorithm for approximating the {John} ellipsoid.
\newblock In: Proceedings of the Thirty-Second Conference on Learning Theory, \emph{Proceedings of Machine Learning Research}, vol.~99, pp. 849--873 (2019)

\bibitem{cousins2022volumeandsampling}
Cousins, B.: {Volume-and-Sampling}.
\newblock GitHub repository (2022).
\newblock Version 2.2.1, \url{https://github.com/Bounciness/Volume-and-Sampling}

\bibitem{grotschel1988geometric}
Gr{\"o}tschel, M., Lov{\'a}sz, L., Schrijver, A.: Geometric Algorithms and Combinatorial Optimization, \emph{Algorithms and Combinatorics}, vol.~2.
\newblock Springer (1988)

\bibitem{horn2012matrix}
Horn, R.A., Johnson, C.R.: Matrix Analysis, 2nd edn.
\newblock Cambridge University Press (2012)

\bibitem{john1948extremum}
John, F.: Extremum problems with inequalities as subsidiary conditions.
\newblock In: Studies and Essays Presented to R. Courant on His 60th Birthday, January 8, 1948, pp. 187--204. Interscience Publishers (1948)

\bibitem{khachiyan1996rounding}
Khachiyan, L.G.: Rounding of polytopes in the real number model of computation.
\newblock Mathematics of Operations Research \textbf{21}(2), 307--320 (1996)

\bibitem{khachiyan1990complexity}
Khachiyan, L.G., Todd, M.J.: On the complexity of approximating the maximal inscribed ellipsoid for a polytope.
\newblock Mathematical Programming \textbf{61}(1-3), 137--159 (1993)

\bibitem{kumar2005minimum}
Kumar, P., Y{\i}ld{\i}r{\i}m, E.A.: Minimum-volume enclosing ellipsoids and core sets.
\newblock Journal of Optimization Theory and Applications \textbf{126}(1), 1--21 (2005).
\newblock \doi{10.1007/s10957-005-2653-6}

\bibitem{kurzhanski1997ellipsoidal}
Kurzhanski, A.B., V{\'a}lyi, I.: Ellipsoidal Calculus for Estimation and Control.
\newblock Systems \& Control: Foundations \& Applications. Birkh{\"a}user, Boston, MA (1997).
\newblock \doi{10.1007/978-1-4612-0277-6}

\bibitem{LiYuJiangGaoHan2026}
Li, X., Yu, J., Jiang, J., Gao, J., Han, A.: Beyond averaging in {John} ellipsoid approximation: High-accuracy algorithms in the leverage-score model.
\newblock arXiv:2606.20082  (2026)

\bibitem{nesterov1994interior}
Nesterov, Y., Nemirovskii, A.: Interior-point polynomial algorithms in convex programming.
\newblock SIAM (1994)

\bibitem{nesterov1989self}
Nesterov, Y.E., Nemirovskii, A.S.: Self-concordant functions and polynomial-time methods in convex programming.
\newblock Tech. rep., Central Economic and Mathematical Institute, USSR Academy of Sciences, Moscow, USSR (1989)

\bibitem{sunfreund2004mvce}
Sun, P., Freund, R.M.: Computation of minimum-volume covering ellipsoids.
\newblock Operations Research \textbf{52}(5), 690--706 (2004)

\bibitem{wachter2006implementation}
W{\"a}chter, A., Biegler, L.T.: On the implementation of an interior-point filter line-search algorithm for large-scale nonlinear programming.
\newblock Mathematical Programming \textbf{106}(1), 25--57 (2006)

\bibitem{WoodruffYasuda2024}
Woodruff, D.P., Yasuda, T.: {John} ellipsoids via lazy updates.
\newblock In: Advances in Neural Information Processing Systems, vol.~37 (2024).
\newblock \doi{10.52202/079017-2179}

\bibitem{xie2006maxsphere}
Xie, Y., Snoeyink, J., Xu, J.: Efficient algorithm for approximating maximum inscribed sphere in high dimensional polytope.
\newblock In: Proceedings of the Twenty-Second Annual Symposium on Computational Geometry, pp. 21--29 (2006)

\bibitem{zhanggao2003mvie}
Zhang, Y., Gao, L.: On the numerical solution of the maximum-volume ellipsoid problem.
\newblock SIAM Journal on Optimization \textbf{14}(1), 53--76 (2003)

\bibitem{Zhao2025AwayStep}
Zhao, R.: New analysis of an away-step {Frank--Wolfe} method for minimizing log-homogeneous barriers.
\newblock Mathematics of Operations Research \textbf{51}(1), 35--59 (2025).
\newblock \doi{10.1287/moor.2023.0281}

\end{thebibliography}
\endgroup

\end{document}